\documentclass[11pt]{amsart}
\usepackage{a4wide,pdfsync,hyperref}
\usepackage{amsmath,amssymb,esint}
\usepackage{enumerate}
\usepackage{color}

\setlength{\evensidemargin}{0mm} \setlength{\topmargin}{0mm}
\setlength{\textheight}{220mm} \setlength{\textwidth}{155mm}

\numberwithin{equation}{section}

\allowdisplaybreaks

\newtheorem{theorem}{Theorem}[section]
\newtheorem{lemma}[theorem]{Lemma}

\newtheorem{corollary}[theorem]{Corollary}
\newtheorem{proposition}[theorem]{Proposition}
\newtheorem{remark}[theorem]{Remark}

\newcommand{\beqq}{\begin{eqnarray}}
\newcommand{\enqq}{\end{eqnarray}}

\newcommand{\enn}{\end{equation}}
\newcommand{\bef}{\begin{proof}}
\newcommand{\enf}{\end{proof}}



\let\d=\delta
\let\e=\varepsilon

\let\la=\lambda

\let\f=\frac

\let\om=\omega

\let\D=\Delta

\let\tri=\triangle
\let\na=\nabla

\let\pa=\partial


\def\cS{{\mathcal S}}

\def\Re{\mathbf {Re}}


\def\R{\mathbf R}

\def\no{\noindent}

\def\dv{\mbox{div}}

\def\D{\langle D_x\rangle}
\def\k{\langle k\rangle}

\def\eqdef{\buildrel\hbox{\footnotesize def}\over =}

\newcommand{\beq}{\begin{equation}}
\newcommand{\eeq}{\end{equation}}
\newcommand{\ben}{\begin{eqnarray}}
\newcommand{\een}{\end{eqnarray}}
\newcommand{\beno}{\begin{eqnarray*}}
\newcommand{\eeno}{\end{eqnarray*}}


\begin{document}
\title[Optimal Gevrey stability of hydrostatic approximation]{Optimal Gevrey stability of hydrostatic approximation for the Navier-Stokes equations in a thin domain }

\author[C. Wang]{Chao Wang}
\address{School of Mathematical Sciences\\ Peking University\\ Beijing 100871,China}
\email{wangchao@math.pku.edu.cn}

\author[Y. Wang]{Yuxi Wang}
\address{School of Mathematics, Sichuan University, Chengdu 610064, China}
\email{wangyuxi@scu.edu.cn}


\maketitle

\begin{abstract}

In this paper, we study the hydrostatic approximation for the Navier-Stokes system in a thin domain.  When the convex initial data with Gevrey regularity of optimal index $\f32$ in $x$ variable and Sobolev regularity in $y$ variable, we justify the limit from the anisotropic Navier-Stokes system to the hydrostatic Navier-Stokes/Prandtl system. Due to our method in the paper is independent of $\e$, by the same argument, we also get the hydrostatic Navier-Stokes/Prandtl system is well-posedness in the optimal Gevrey space. Our results improve the Gevrey index in \cite{GMV, WWZ1} whose Gevrey index is $\f{9}8$.
\end{abstract}

\section{introduction}
\subsection{Presentation the problem and related results}
In this article, we study 2-D incompressible Navier-Stokes equations in a thin domain where the aspect ratio and  the Reynolds number have certain constraints:
\begin{align}\label{eq:NS}
\left\{
\begin{aligned}
&\partial_{t} U+U \cdot \nabla U-\varepsilon^{2} (\pa_x^2+\eta\pa_y^2) U+\nabla P=0,\\
&\dv~ U=0,\\
&U|_{y=0}=U|_{y=\e}=0,
\end{aligned}
\right.
\end{align}
where $t\geq0, (x,y)\in \mathcal{S}^\e=\left\{(x, y) \in \mathbb{T}\times\mathbb{R} : 0<y<\varepsilon\right\}.$
 Here, $U(t,x,y), P(t,x,y)$ stand for the velocity and pressure function respectively and $\eta$ is a positive constant independent of $\varepsilon$. The width of domain $\mathcal{S}^{\e}$ is $\e,$ and the boundary condition in \eqref{eq:NS} corresponds to non-slip condition at the walls $y = 0, \e.$  In addition,
the system is prescribed with the initial data of the form
\begin{equation}\label{def: initial}
\left.U\right|_{t=0}=\left(u_{0}\left(x, \frac{y}{\varepsilon}\right), \varepsilon v_{0}\left(x, \frac{y}{\varepsilon}\right)\right)=U_{0}^{\varepsilon} \quad \text {in}\quad \mathcal{S}^{\varepsilon}.
\end{equation}
This is a classical model with applications to oceanography, meteorology and geophysical flows, where the vertical dimension of the domain is very small compared with the  horizontal  dimension of the domain. 

To study the process $\e\to 0$, we firstly fix the domain independent of $\e$. Here, we rescale the system \eqref{eq:NS} as follows:
\begin{equation*}
U(t, x, y)=\left(u^{\varepsilon}\left(t, x, \frac{y}{\varepsilon}\right), \varepsilon v^{\varepsilon}\left(t, x, \frac{y}{\varepsilon}\right)\right) \quad \text { and } \quad P(t, x, y)=p^{\varepsilon}\left(t, x, \frac{y}{\varepsilon}\right).
\end{equation*}
We put above relations into  \eqref{eq:NS}, then   \eqref{eq:NS} is reduced to a scaled anisotropic Navier-Stokes system:
\begin{align}\label{eq:ANS}
\left\{
\begin{aligned}
&\pa_t u^\e+u^\e\pa_x u^\e+v^\e\pa_y u^\e-\e^2\pa_x^2 u^\e-\eta\pa_y^2 u^\e+\pa_x p^\e=0,\\
&\e^2(\pa_t v^\e+u^\e\pa_x v^\e+v^\e\pa_y v^\e-\e^2\pa_x^2 v^\e-\eta\pa_y^2 v^\e)+\pa_y p^\e=0,\\
&\pa_x u^\e+\pa_y v^\e=0,\\
&(u^\e,v^\e)|_{y=0,1}=0,\\
&(u^\e,v^\e)|_{t=0}=(u_0,v_0) ,
\end{aligned}
\right.
\end{align}
where $(x,y)\in\mathcal{S}=\left\{(x, y) \in \mathbb{T}\times(0,1) \right\}$. 

To simplify the notations, we take $\eta=1$ in this paper and denote $\Delta_\e=\e^2\pa_x^2+\pa_y^2$.

Formally, taking $\e\to 0$ in \eqref{eq:ANS}, we derive the hydrostatic Navier-Stokes/Prandtl system (see \cite{LL, Renardy}):
\begin{align}\label{eq:HyNS}
\left\{
\begin{aligned}
&\pa_t u_p+u_p\pa_x u_p+v_p\pa_y u_p-\eta\pa_y^2 u_p+\pa_x p_p=0\quad \text { in } \mathcal{S} \times (0, \infty),\\
&\pa_y p_p=0 \quad \text { in } \mathcal{S} \times (0, \infty),\\
&\pa_x u_p+\pa_y v_p=0 \quad \text { in } \mathcal{S} \times (0, \infty),\\
&(u_p,v_p)|_{y=0,1}=0,\\
&u_p|_{t=0}=u_0 \quad \text { in }\,\, \mathcal{S}.
\end{aligned}
\right.
\end{align}

The goal in this paper is to justify the limit from the scaled anisotropic Navier-Stokes system  \eqref{eq:ANS} to the hydrostatic Navier-Stokes/Prandtl system \eqref{eq:HyNS}, for a class of convex data in the optimal Gevrey class with index $\gamma=\f32$. 
\medskip

Before presenting the precise statement of the main result in this paper, we recall some results on system \eqref{eq:HyNS}. If $\eta=0$ in the system \eqref{eq:HyNS}, we get the hydrostatic Euler system: 
\begin{align}\label{eq:HyE}
\left\{
\begin{aligned}
&\pa_t u+u\pa_x u+v\pa_y u +\pa_x p=0\quad \text { in } \mathcal{S} \times (0, \infty) ,\\
&\pa_y p=0\quad \text { in } \mathcal{S} \times (0, \infty) ,\\
&\pa_x u+\pa_y v=0 \quad \text { in } \mathcal{S} \times (0, \infty),\\
&(u,v)|_{y=0,1}=0,\\
&u|_{t=0}=u_0 \quad \text { in }\,\, \mathcal{S}.
\end{aligned}
\right.
\end{align}

There are a lot of research on the system \eqref{eq:HyE}, and readers can refer to \cite{Brenier, Brenier1, CINT, Grenier, KTVZ, KMVW, MW1, Renardy, Wong}. Renardy \cite{Renardy} proved the linearization of \eqref{eq:HyE} has a growth  like $e^{|k|t}$ if the initial data is not uniform convexity (or concavity) in variable $y$.  Local well-posedness in the analytic setting was established in \cite{KTVZ}. Under the convexity condition, Masmoudi and Wong \cite{MW1} got the well-posedness of \eqref{eq:HyE} in the Sobolev space.

Next, we recall some results on the well-posedness of  the hydrostatic Navier-Stokes/Prandtl system \eqref{eq:HyNS}. Similar to the classical Prandtl equation, \eqref{eq:HyNS} lose one derivative because of term $v_p\pa_y u_p$.   Paicu, Zhang and the Zhang \cite{PZZ} obtained the global well-posedness of system \eqref{eq:HyNS} when the initial data is small in the analytical space. Meantime, Renardy \cite{Renardy} also proved that the linearization of the hydrostatic Navier-Stokes equations at certain parallel shear flows is ill-posed, and may have a growth $e^{|k|t}$ which is the same as \eqref{eq:HyE} when the initial data is not convex. Thus, in order to obtain well-posedness  results that break through the analytic space, one may need the convexity condition on the velocity. For that, under the convexity condition, G\'erard-Varet, Masmoudi and Vicol 
proved the \eqref{eq:HyNS} is local well-posedness in the Gevrey class with index $9/8$ in \cite{GMV}. In \cite{GMV}, they firstly derive the vorticity equations $\om=\pa_y u$:
\beno
\pa_t (\pa_x \om)+  \pa_x v\pa_y \om+\cdots=0,
\eeno
where the worst term is  $\pa_x v$ leading to one derivative loss. 
Then, they use the "hydrostatic trick" which means that they take inner product with $\pa_x \om/\pa_y \om$ ($\pa_y\om\geq c_0>0$) instead of $\pa_x \om$ to take advantage of the cancellation:
\beno
\int \pa_x v\pa_y \om\cdot \f{\pa_x \om}{\pa_y \om}=\int \pa_x v\pa_x \om=-\int \pa_x\pa_y v\pa_x u=0.
\eeno
Such an idea was used previously in \cite{ MW1}. To close the energy estimates, "hydrostatic trick" is not enough due to the "bad" boundary condition of $\om$ 
\beno
 \pa_y\om|_{y=0}=-\pa_x \int_0^1 u^2dy+\cdots.
\eeno
which lose one derivative too. To overcome that, \cite{GMV} introduce the following decomposition
\beno
\om=\om^{bl}+\om^{in},
\eeno
where $\om^{bl}$ is the  boundary corrector which satisfies that
\beno
\pa_t \om^{bl}-\pa_{y}^2\om^{bl}=0,\quad \pa_y\om^{bl}|_{y=0}=-\pa_x \int_0^1 u^2dy.
\eeno
Following the above decomposition, \cite{GMV} obtain the well-posedness results of \eqref{eq:HyNS} in the Gevrey class with index $\gamma=\f98$. 

To search the best functional space for the system \eqref{eq:HyNS}, 
based on the Tollmien-Schlichting instabilities for Navier-Stokes \cite{GGN}, G\'erard-Varet, Masmoudi and Vicol  also give the following conjecture: {\it
"Our conjecture - based on a formal parallel with Tollmien-Schlichting instabilities for Navier-Stokes [18] - is that the best exponent possible should be $\gamma=\f32$, but such result is for the time being out of reach."} 

\medskip

During studying the anisotropic Navier-Stokes system \eqref{eq:ANS} and the hydrostatic Navier-Stokes/Prandtl system \eqref{eq:HyNS}, another important problem is to justify the inviscid limit. Under analytical setting, Paicu, Zhang and Zhang \cite{PZZ} justified the limit from \eqref{eq:ANS}  to  \eqref{eq:HyNS}. 
 Based on the work \cite{GMV}, we \cite{WWZ1} justified the limit in the Gevrey class with index $\gamma=\f98$. 

\medskip

In this paper, we aim to prove the conjecture of G\'erard-Varet, Masmoudi and Vicol. To do that, we use some idea from the classical inviscid limit theory. Next, we recall the recent development on the classical Prandtl equation and the inviscid limit theory. 
\medskip

There are a lot of papers studying the well-posedness of  Prandtl equation in some special functional space.  For monotonic initial data, \cite{Olei, AW, MW} used different method to get local existence and uniqueness of classical solutions to the Prandtl equation in Sobolev space. Without monotonic condition, \cite{LCS, SC1} proved that Prandtl equation are well-posedness in the analytic class; \cite{GM, LY, CWZ} proved  well-posedness of the Prandtl equations in Gevrey class for a class of concave initial data. Without any structure assumption, Dietert and G\'erard-Varet \cite{DG} proved well-posedness in Gevrey space with index $\gamma=2$. According to \cite{GD},  $\gamma=2$ may be the optimal index for the well-posedness theory.  For more results on the Prandtl equation, see \cite{GN, Wang, WXZ, XX, XZ}.

On the inviscid limit problem, we refer to \cite{SC2, WWZ, KVW, NN, Mae, FTZ} for the analytical class. Note that to go from analytic to Gevrey data is a challenging problem. The first results in  Gevrey  class is given by \cite{GMM}. G\'erard-Varet, Masmoudi and Maekawa \cite{GMM} proved stability of the Prandtl expansion for the perturbations in the Gevrey class when $U^{BL}(t, Y)$ is a monotone and concave function where the boundary layer is the shear type like
\beno
u_s^{\nu}=(U^e(t, y), 0)+(U^{BL}(t, \f{y}{\sqrt\nu}),0),
\eeno
where $\nu$ is the the viscosity coefficient.  Later, Chen, Wu and Zhang \cite{CWZ-1} improved the results in \cite{GMM} to get the $L^2\cap L^\infty$ stability. Very recently, G\'erard-Varet, Masmoudi and Maekawa \cite{GMM1} used a very 
clever decomposition to get the optimal Prandtl expansion around concave boundary layer.  Their results generalized the one obtained in \cite{GMM, CWZ-1} which restricted to expansions of shear flow type. In their paper, they decompose the  stream function $\phi$ as follows:
\beno
\phi=\phi_{slip}+\phi_{bc}
\eeno
  where $\phi_{slip}$ enjoys  a "good" boundary condition and $\phi_{bc}$ is a corrector which recover the boundary condition. This kind of decomposition is also used in \cite{CWZ-1}. To estimate $\phi_{bc}$, they also need the following decomposition
  \beno
  \phi_{bc}=\phi_{bc, S}+\phi_{bc, T}+\phi_{bc, R},
  \eeno
where $\phi_{bc, S}$ satisfies the Stokes equation, $\phi_{bc, T}$ is to correct the stretching term with "good" boundary condition and $\phi_{bc, R}$ solves formally the same system as $\phi_{slip}$.  In this paper, we apply the decomposition in \cite{GMM1} to justify the limit from \eqref{eq:ANS}  to \eqref{eq:HyNS}.  

\medskip
\subsection{Statement of the main results.}
Before stating the main results,  we give some assumption on initial data. Assume that initial data belong into the following Gevrey class:
\begin{align}\label{initial-1}
\|e^{\D^\f23}\pa_y u_0\|_{H^{14,0}}+\|e^{\D^\f23}\pa_y^3 u_0\|_{H^{10,0}}:=M<+\infty,
\end{align}
where $H^{r,s}$ is the  anisotropic Sobolev space defined by
\begin{align*}
\|f\|_{H^{r,0}}=\|\|f\|_{H^r_x(\mathbb{T})}\|_{H^s_y(0,1)}.
\end{align*}

%
%

\bigskip
More precisely, we consider initial data of the form
\begin{align*}
u^\e(0,x,y)=u_0(x,y),\quad v^\e(0,x,y)=v_0(x,y),
\end{align*}
which satisfy the compatibility conditions
\begin{align}
&\pa_xu_0+\pa_y v_0(t,x,y)=0, \quad u_0(t,x,0)=u_0(t,x,1)=v_0(t,x,0)=v_0(t,x,1)=0,\label{initial-2}\\
&\int_0^1\pa_x u_0dy=0,\quad \pa_y^2u_0|_{y=0,1}=\int_0^1(-\pa_xu_0^2+\pa_y^2 u_0)-\int_{\mathcal{S}}\pa_y^2 u_0 dxdy.\label{initial-3}
\end{align}
 Moreover, we assume the initial velocity satisfies the convex condition
\begin{align}\label{assume: convex}
\inf_{\mathcal{S}}\pa_y^2 u_0\geq 2 c_0>0.
\end{align}

Now, we are in the position to state the main results in our paper.
\begin{theorem}\label{thm: main}
Let initial data $u_0$ satisfy \eqref{initial-1}-\eqref{assume: convex}. Then there exist $T>0$ and $C > 0$ independent of $\e$ such that there exists a unique solution of the scaled anisotropic Navier–Stokes equations \eqref{eq:ANS} in $[0,T]$, which satisfies that for any $t\in[0,T],$ it holds that
\begin{align*}
\|(u^\e-u_p, \e v^\e-\e v_p)\|_{L^2_{x,y}\cap L^\infty_{x,y}}\leq C\e^2,
\end{align*}
where $(u_p, v_p)$ is the solution to \eqref{eq:HyNS}.
\end{theorem}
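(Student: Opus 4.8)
The plan is to derive Theorem~\ref{thm: main} from a single \emph{uniform-in-$\e$} a priori estimate for the scaled system \eqref{eq:ANS} in a Gevrey-$\f32$ norm adapted to \eqref{initial-1}, and then to compare with \eqref{eq:HyNS} by an energy estimate on the difference. By incompressibility $v^\e=-\int_0^y\pa_x u^\e\,dy'$, so the whole analysis can be carried in terms of $u^\e$ and its $y$-derivatives, the genuinely $\e$-dependent terms $\e^2\pa_x^2u^\e$ and $\e^2(\pa_t v^\e+u^\e\pa_x v^\e+v^\e\pa_y v^\e-\Delta_\e v^\e)$ being treated perturbatively. Following \cite{GMV} I would pass to the vorticity $\om^\e=\pa_y u^\e$, whose equation for $\pa_x\om^\e$ carries the derivative-losing term $\pa_x v^\e\,\pa_y\om^\e$. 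The convexity \eqref{assume: convex} propagates to $\pa_y\om^\e(t)\ge c_0>0$ on a short time interval uniformly in $\e$, which licenses the ``hydrostatic trick'': testing the $\pa_x\om^\e$-equation against $\pa_x\om^\e/\pa_y\om^\e$ uses the cancellation $\int\pa_x v^\e\,\pa_x\om^\e=-\int\pa_x\pa_y v^\e\,\pa_x u^\e=0$ to remove the top-order contribution, exactly as in \cite{MW1, GMV}. The compatibility conditions \eqref{initial-2}--\eqref{initial-3} are what make the boundary correctors below well defined and consistent at $t=0$.

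To reach the optimal index $\gamma=\f32$ rather than the $\f98$ of \cite{GMV, WWZ1}, the key is to replace the single heat corrector of \cite{GMV}, dictated by the bad Neumann condition $\pa_y\om^\e|_{y=0}=-\pa_x\int_0^1(u^\e)^2\,dy+\cdots$, by the finer decomposition of \cite{GMM1}. Concretely, I would write the stream function as $\phi=\phi_{slip}+\phi_{bc}$ with $\phi_{slip}$ satisfying a good (Dirichlet-type) boundary condition, and then split $\phi_{bc}=\phi_{bc,S}+\phi_{bc,T}+\phi_{bc,R}$, where $\phi_{bc,S}$ solves a Stokes-type problem at the boundary-layer scale, $\phi_{bc,T}$ corrects the stretching term with a good boundary condition, and $\phi_{bc,R}$ solves formally the same system as $\phi_{slip}$. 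The point is that the resolvent of the Airy operator associated with the $y=0$ layer, at horizontal frequency $k$, gains the factor $|k|^{1/3}$, and a Gevrey weight of the form $e^{\la(t)\langle D\rangle^{2/3}}$ is exactly strong enough to absorb a single such loss.

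With this in place I would set up a time-dependent energy $E_{\la}(t)$ built from $e^{\la(t)\langle D\rangle^{2/3}}$ applied to $\pa_y u^\e$ and $\pa_y^3 u^\e$ (hence also $u^\e$) in the Sobolev indices of \eqref{initial-1}, with $\la(t)$ strictly decreasing so that $-\dot\la(t)\langle D\rangle^{2/3}$ dominates the $|k|^{1/3}$ loss from the worst terms. Combining the cancellation of the hydrostatic trick, the \cite{GMM1}-type decomposition, and standard commutator and product estimates in the anisotropic spaces $H^{r,s}$, one should close $E_{\la}(t)\le C(M)$ on $[0,T]$ with $T,C$ independent of $\e$; this gives the unique solution of \eqref{eq:ANS} (uniqueness via the same estimate on a difference of solutions), and running the identical scheme with $\e=0$ yields the companion well-posedness of \eqref{eq:HyNS} in the optimal Gevrey class. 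Finally, for the rate I would write the system for $\tilde u=u^\e-u_p$, $\tilde v=v^\e-v_p$: it is forced essentially by the $\e^2$-terms (together with a pressure correction of the same order), which are bounded in a sufficiently strong norm by the uniform Gevrey bounds just obtained for both $(u^\e,v^\e)$ and $(u_p,v_p)$. Running the linearized version of the estimate above around the convex profile $u^\e$ then produces $\|(\tilde u,\e\tilde v)\|_{L^2_{x,y}\cap L^\infty_{x,y}}\le C\e^2$ on $[0,T]$.

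The main obstacle is transplanting the \cite{GMM1} decomposition into the present hydrostatic/thin-domain setting and verifying that the correctors $\phi_{bc,S}$ and $\phi_{bc,T}$ are controlled with \emph{exactly} the $|k|^{1/3}$ gain, no worse, uniformly in $\e$, so that the $e^{\la(t)\langle D\rangle^{2/3}}$ weight genuinely closes. In particular one must check that the extra $\e^2\pa_x^2$ viscosity does not spoil the resolvent estimates for the Airy operator, and that the boundary-layer, Stokes, and interior pieces recombine without producing any loss beyond the single $|k|^{1/3}$. Everything else---the hydrostatic trick, the propagation of convexity, the product and commutator estimates, and the $\e^2$ error analysis---is, by comparison, of the type already developed in \cite{GMV, WWZ1, GMM1}.
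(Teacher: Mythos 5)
Your proposal captures the paper's essential toolkit—the GMM1-style splitting $\phi=\phi_{slip}+\phi_{bc}$ with $\phi_{bc}=\phi_{bc,S}+\phi_{bc,T}+\phi_{bc,R}$, the hydrostatic trick keyed to the convexity $\pa_y\om^p\ge c_0>0$, and a decaying Gevrey-$\f32$ weight $e^{(1-\la t)\langle D_x\rangle^{2/3}}$ chosen to absorb the single $|k|^{1/3}$ loss coming from the Airy/Stokes resolvent—but it is organized differently from the paper. The paper does \emph{not} prove a uniform Gevrey bound on $(u^\e,v^\e)$ itself; instead it builds a two-term Hilbert approximate solution $u^p=u_p^0+\e^2 u_p^2$ (with $u_p^0$ solving \eqref{eq:HyNS} and $u_p^2$ solving \eqref{HyNS-O(e^2)}), so that the forcing $(R_1,R_2)$ in the error equations \eqref{eq:error-u} is $O(\e^4)$. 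The whole argument is then a single nonlinear estimate on the error stream function $\phi$ with the bootstrap $\|\om^R\|_{X^2}\le\mathfrak{C}\e^3$ (cf. \eqref{assume: 1}); this $\e^3$ smallness is precisely what makes the quadratic terms $(\mathcal{N}_u,\mathcal{N}_v)$ in Proposition \ref{prop: N} absorbable (one needs $\e^{-2}\|\om^R\|^2$ small), and the $\e^2$ rate in the theorem statement is then dominated by the explicit corrector $\e^2u_p^2$, not by $\om^R$. The convexity enters through $\om^p=\pa_y u^p$ (an approximate-solution quantity) rather than through $\pa_y\om^\e$.

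Your route—first prove a uniform nonlinear Gevrey bound on $(u^\e,v^\e)$ with $O(1)$ data, then a \emph{linear} estimate on $\tilde u=u^\e-u_p$ forced by $\e^2\pa_x^2 u^\e$—is logically coherent (the difference equation is indeed linear in $(\tilde u,\tilde v)$, so no Hilbert expansion is needed to beat the quadratic bootstrap), and it is closer in spirit to the direct well-posedness result of G\'erard-Varet--Iyer--Maekawa mentioned in Remark \ref{rmk}. But be aware of two points. First, your step 1 requires closing the full nonlinear Orr--Sommerfeld/boundary-corrector machinery with no smallness of the forcing; this is genuinely more work than the paper's error estimate, whose sources are already $O(\e^4)$, and the paper deliberately avoids it by delegating Gevrey-$\f32$ regularity of the reference flow to Lemma \ref{lem: u^p} (stated as a by-product of the $\e=0$ version of the scheme). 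Second, in your step 2 the hydrostatic trick must be run with the coefficient $\pa_y\om^\e$ (or some interpolant between $\pa_y\om^\e$ and $\pa_y\om_p$), so you need to propagate $\pa_y^2 u^\e\ge c_0$ uniformly in $\e$, which is a consequence of step 1 but must be stated; and the boundary datum $\pa_y\phi|_{y=0,1}=C(t)$ with $C(t)=\f1{2\pi}\int_{\cS}u^R$ is a nonlocal zero-mode that has to be fed into the $(I+R_{bc})^{-1}$ solvability argument of Section 8.4 just as in the paper. If you supply those two ingredients, your route and the paper's are essentially parallel: the paper trades a second Orr--Sommerfeld estimate for a two-term Hilbert expansion, while you trade the expansion for a second (linear) Orr--Sommerfeld estimate.
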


\begin{remark}\label{rmk}
Although we do not give the proof that the system \eqref{eq:HyNS} is well-posedness in Gevrey class $\f32$, one can follow the proof of Theorem \ref{thm: main} to obtain the well-posedness. To avoid repeatability in the proof, we omit the details. 
 Actually, the main difference between $\e=0$ and $\e\neq 0$ is on the construction boundary corrector $\phi_{bc,S}$, and readers can find more details in Remark \ref{rmk: difference}. 
\end{remark}

\begin{remark}
In the recent work \cite{GIM} by G\'erard-Varet, Iyer and  Maekawa, they establish  well-posedness of Hydrostatic Navier-Stokes system in Gevrey class $\f32$. In our present work, we focus on the inviscid limit problem.
\end{remark}


\bigskip

\subsection{Sketch of the proof.}
In this subsection, we  sketch the  main ingredients in our proof.

\begin{enumerate}
\item {\bf Introduce the error equations.}
In Section 3, we deduce the error equations. We introduce the error
\begin{align*}
u^R=u^\e-u^p,\quad v^R=v^\e-v^p, \quad p^R=p^\e-p^p,
\end{align*}
which satisfies
\begin{align}\label{eq: error-1}
\left\{
\begin{aligned}
&\pa_t u^R-{\Delta_\e u^R}+v^R\pa_y u^p+\pa_x p^R=\cdots,\\
&\e^2(\pa_t v^R-\Delta_\e v^R)+\pa_y p^R=\cdots.\end{aligned}
\right.
\end{align}
Here $(u^p, v^p, p^p)$ is approximate solution given in \eqref{def: app}. The key point in this paper is to obtain the uniform estimate (in $\e$) of $(u^R, \e v^R)$ in the Gevrey class with index $\gamma=\f32.$ In view of \eqref{eq: error-1}, since $v^R$ is controlled via the relation
$v^R=-\int_0^y \pa_x u^R dy'$, the main difficulty comes from the term $v^R\pa_y u^p$, which loses one tangential derivative. In \cite{WWZ1}, we justify the limit in Gevrey class $\f 98.$ For the data in the Gevrey class with optimal index $\gamma=\f32$, we need to introduce new ideas.

\item {\bf Introduce the vorticity formulation.} In order to eliminate $p^R,$ we introduce vorticity $\om^R=-\e^2\pa_x v^R+\pa_y u^R$ and rewrite the equation of $\om^R$ by stream function $\phi$ which satisfies
\beno
v^R=-\pa_x\phi, \quad u^R=\pa_y \phi+C(t), \quad C(t)=\f1{2\pi}\int_{\cS} u^Rdxdy.
\eeno
Thus, we get
\begin{align}\label{eq: vorticity-1}
\left\{
\begin{aligned}
&(\pa_t-\Delta_\e)\Delta_\e \phi-\pa_x\phi\pa_y \om^p=\cdots,\qquad (x,y)\in\mathcal{S},\\
&\phi|_{y=0,1}=0, \quad \pa_y\phi|_{y=0,1}=C(t),\qquad x\in\mathbb{T}.
\end{aligned}
\right.
\end{align}
We notice the term $\pa_x\phi\pa_y \om^p$ also lose one tangential derivative. But under the convexity condition $\pa_y\om^p\geq c_0>0,$ one can use "hydrostatic trick" to deal with this term. Testing $\f{\om^R}{\pa_y\om^p}$ to the \eqref{eq: vorticity-1} instead of $\om^R,$ we have the following cancellation:
\begin{align*}
-\int_{\cS}\pa_x\phi \pa_y\om^p \cdot\f{\om^R}{\pa_y\om^p}dxdy=-\int_{\cS}\pa_x\phi \tri_\e \phi dxdy=\int_{\cS}\pa_x|\na_\e\phi|^2  dxdy=0,
\end{align*}
where we use $\phi|_{y=0,1}=0.$ However, the boundary condition of $\phi$ is $\pa_y\phi|_{y=0,1}= C(t)$ which brings an essential difficulty.

By the energy estimates, taking inner product in $X^{r}$(the definition is given in section 2) with $-\pa_t\phi,$ we get
\begin{align}\label{est: 1-intro}
&\sup_{s\in[0,t]}(\la\|\na_\e\phi(s)\|_{X^{\f73}}^2+\|\Delta_\e\phi(s)\|_{X^2}^2)\\
\nonumber
\leq&C\int_0^t(\e^{-2}\|\varphi\Delta_\e\phi\|_{X^2}^2+\e^{-2}\|\na_\e\phi\|_{X^2}^2+\cdots)ds,
\end{align}
where $\varphi(y)=y(1-y).$ All we need to do is to control $\e^{-1}\|\varphi\Delta_\e\phi\|_{X^2}$ and $\e^{-1}\|\na_\e\phi\|_{X^2}$ by the left hand side of \eqref{est: 1-intro}.


Motivated by \cite{GMM1},
we expect to achieve that by a decomposition of stream function $\phi=\phi_{slip}+\phi_{bc}$ in Gevrey $\f32$ regularity. Here $\phi_{slip}$ enjoys  a "good" boundary condition and $\phi_{bc}$ is a corrector which recover the boundary condition. In the following, we present the decomposition precisely.

\item {\bf Gevrey estimate under artificial boundary conditions.} 
$\phi_{slip}$ enjoying a good boundary condition is defined by
\begin{align}\label{eq: vorticity-good}
\left\{
\begin{aligned}
&(\pa_t-\Delta_\e)\om_{slip}-\pa_x\phi_{slip}\pa_y \om^p=\cdots,\qquad (x,y)\in\mathcal{S}\\
&\phi_{slip}|_{y=0,1}=0, \quad \om_{slip}|_{y=0,1}=0,\qquad x\in\mathbb{T}, 
\end{aligned}
\right.
\end{align}
where $\om_{slip}=\Delta_\e\phi_{slip}.$
By "hydrostatic trick" and Navier-slip boundary conditions to obtain
\begin{align}\label{est: 2-intro}
\la\int_0^t  (\|\om_{slip}\|_{X^{\f73}}^2+\|\na_\e\phi_{slip}\|_{X^\f73}^2+|\na_\e\phi_{slip}|_{y=0,1}|_{X^\f73}^2)ds\leq \f{C}{\la}\int_0^t\| \e\Delta_\e\phi \|_{X^{2}}^2 ds+\cdots.
\end{align}
 The full study of the Orr-Sommerfeld formulation \eqref{eq: vorticity-good} with Navier-slip boundary conditions is given in Section 7.

\item {\bf Recovery the non-slip boundary condition.} In Step (3), we use the slip boundary condition, not the real boundary condition $\pa_y\phi|_{y=0, 1}=C(t).$ To recover the boundary condition, we introduce the following system: 
\begin{align}\label{eq: vorticity-bc}
\left\{
\begin{aligned}
&(\pa_t-\Delta_\e)\phi_{bc}-\pa_x\phi_{bc}\pa_y \om^p=0, \qquad (x,y)\in\mathcal{S}\\
&\phi_{bc}|_{y=0,1}=0, \quad \pa_y\phi_{bc}|_{y=i}=h^i,\qquad x\in\mathbb{T},
\end{aligned}
\right.
\end{align}
where $\om_{bc}=\Delta_\e \phi_{bc}$ and $i=0,1.$ And we need to choose a suitable $h^i$ such that
\beno
\pa_y \phi_{bc}|_{y=0,1}=-\pa_y\phi_{slip}|_{y=0,1}+C(t).
\eeno
Next, we give the main idea for proving the existence of $h^i$:

\begin{itemize}

1. We define $\phi_{bc,S}=\phi_{bc,S}^0+\phi_{bc,S}^1$, where $\phi_{bc,S}^i$ solve
\begin{align}\label{eq: vorticity-bc11}
\left\{
\begin{aligned}
&(\pa_t -\Delta_\e)\Delta_\e\phi^i_{bc, S}=0,\\
&\phi^i_{bc, S}|_{y=0}=0,\quad \pa_y\phi^i_{bc, S}|_{y=i}=h^i,\\
&\phi^i_{bc, S}|_{t=0}=0,
\end{aligned}
\right.
\end{align}
where $x\in\mathbb{T},$ $y\in(0,+\infty)$ for $i=0$ and $y\in(-\infty,1)$ for $i=1.$ Taking Fourier transformation on $t$ and $x$, we can write the precise expression of the solution to obtain the Gevrey estimate for $\phi_{bc,S}^i$:
\begin{align}\label{est: 3-intro}
&\int_0^t\|\na_\e\phi^i_{bc,S}\|_{X^\f{5}{2}_i}^2+\|\varphi^i\Delta_\e\phi^i_{bc,S}\|_{X^\f52_i}^2 + \|\pa_x\phi^i_{bc,S}\|_{X^{\f53}}^2ds\leq \f{C}{\la^\f12}\int_0^t|h^i|_{X^\f73}^2ds,
\end{align}
where $\varphi^0(y)=y,~\varphi^1(y)=1-y.$ 
Compared with the decomposition in \cite{WWZ1}, we get more regularity of $\pa_x\phi^i_{bc,S}$ which is a key point to get the optimal Gevrey regularity. The details for this step is given in Section 8.1.

2. We correct the nonlocal term constructed in the above step, by consider the following equations: 
\begin{align}\label{eq: vorticity-bc12}
\left\{
\begin{aligned}
&(\pa_t -\Delta_\e)\Delta_\e\phi^i_{bc, R}-\pa_x\phi_{bc,R}^i\pa_y\om^p=\pa_x\phi_{bc,S}^i\pa_y\om^p,\quad (x,y)\in \cS\\
&\phi^i_{bc, R}|_{t=0}=0, \quad (x,y)\in \cS
\end{aligned}
\right.
\end{align}
with Navier-slip conditions.
By the same process as Step (3) and combining with the sharp estimate \eqref{est: 3-intro} to get estimate for $\phi^i_{bc,R}:$
\begin{align}\label{est: 4-intro}
\la\int_0^t  \|\om_{bc,R}^i\|_{X^{\f73}}^2ds&+\int_0^t (\|\na_\e\phi_{bc,R}^i\|_{X^\f73}^2+|\pa_y\phi_{bc,R}^i|_{y=0,1}|_{X^\f73}^2)ds\\
\nonumber
\leq&\f{C}{\la^\f12}\int_0^t|h^i|_{X^{\f73}}^2ds+\cdots,
\qquad t\in[0, T],
\end{align}

More details are given in Section 8.3.

3. We define $\phi_{bc}=\phi_{bc,S}+\phi_{bc,R}$ where
$\phi_{bc,S}=\sum_{i=0, 1}\phi^i_{bc,S} $ and $\phi_{bc,R}=\sum_{i=0, 1}\phi^i_{bc,R}$, which solve system \eqref{eq: vorticity-bc}.
To match the boundary condition on the  derivative of $\pa_y\phi|_{y=0,1}=C(t)$, we need
\beno
\pa_y\phi_{bc,S}|_{y=0,1}+\pa_y\phi_{bc,R}|_{y=0,1}=\pa_y\phi_{bc}|_{y=0,1}=- \pa_y\phi_{slip}|_{y=0,1}+C(t).
\eeno
On one hand,  $\phi_{bc,S}$ and $\phi_{bc,R}$ are defined by $h^i$. We define a $0$-order  operator $R_{bc}$ given in \eqref{def: R_bc} such that
\beno
(1+R_{bc}) h^i= - \pa_y\phi_{slip}|_{y=0,1}+C(t).
\eeno
Moreover, by the estimate in Step 1 and Step 2,  we can get
\begin{align*}
\int_0^t\Big|R_{bc}[h^0, h^1]\Big|_{X^{\f73}}^2 ds\leq&\f{C}{\la^\f12}\int_0^t|(h^0,h^1)|_{X^{\f 73}}^2ds.
\end{align*}
which means that $(1+R_{bc})$ is an invertible operator when $\lambda$ is large. That means that $\phi_{bc,S}$ and $\phi_{bc,R}$ are well-defined and \eqref{eq: vorticity-bc} is well-posedness. Details are given in Section 8.4.

Due to the transport terms,  we need to introduce a new auxiliary function $\phi_{bc, T}$ between Step 1 and Step 2.  For more details, see Section 8.2.

\end{itemize}

\item {\bf{Close the energy estimates \eqref{est: 1-intro}.}}
Summing estimates  \eqref{est: 3-intro} and \eqref{est: 4-intro} in Step (4), we get estimate for $\phi_{bc}:$
\begin{align*}
\int_0^t\|\na_\e\phi_{bc}\|_{X^\f{7}{3}}^2+\|\varphi\Delta_\e\phi_{bc}\|_{X^\f 73}^2ds\leq& \f{C}{\la^\f12}\int_0^t|(h^0, h^1)|_{X^\f73}^2ds\\
\leq&C\int_0^t |\na_\e\phi_{slip}|_{y=0,1}|^2_{X^\f73}ds+\cdots,
\end{align*}
which along with \eqref{est: 2-intro} to have
\begin{align*}
\int_0^t  (\|\varphi\om\|_{X^{\f73}}^2+\|\na_\e\phi\|_{X^\f73}^2)ds\leq \f{C}{\la}\int_0^t\| \e\Delta_\e\phi \|_{X^{2}}^2 ds+\cdots,
\end{align*}
then putting above estimate into \eqref{est: 1-intro} to close the estimate for system \eqref{eq: vorticity-1}.

\end{enumerate}

\subsection{Notations}

 \no -  $ \mathcal{S}^\e=\left\{(x, y) \in \mathbb{T}\times\mathbb{R} : 0<y<\varepsilon\right\}$ and $ \mathcal{S}=\left\{(x, y) \in \mathbb{T}\times\mathbb{R} : 0<y<1\right\}.$
 
 \no - $\na_\e=(\e\pa_x, \pa_y)$ and $\Delta_\e=\e^2 \pa_x^2+\pa_y^2$.
 
\no - Vorticity of Prandtl part $\om^p$ is defined by $ \om^p=\pa_y^2 u^p$.

\no - Vorticity of reminder part $\om^R=\Delta_\e \phi$ is defined by $\om^R=\e^2\pa_x v^R-\pa_y u^R$. In this paper, we also define $\om^i_{bc, j}=\Delta_\e \phi_{bc, j}^i$ where $i=0, 1$ and $j\in\{R, T\}$

\no - Cut-off functions $\varphi(y)=y(1-y)$ and $\varphi^i(y)=i+(-1)^iy$.

\no - $C(t)=\f1{2\pi}\int_{\cS} u^Rdxdy$.
 
\no - The Fourier transform of $f_{\Phi}$ is defined by $ e^{(1-\lambda t)\langle k\rangle^{\f23}} \widehat{f}(k)$.

\section{Gevrey norms and preliminary lemmas}

At the beginning of this section, we give the definition of the functional space $X^r$ and the Gevrey class.  First, we define 
\begin{align}\label{def: tau}
f_\Phi=\mathcal{F}^{-1}(e^{\Phi(t,k)}\widehat{f}(k))=e^{\Phi(t,D_x)}f,\quad \Phi(t,k)\eqdef\tau(t)\langle k\rangle^{\f23},
\end{align}
where $\tau(t)\ge 0$. Moreover, it is easy to get that $\Phi(t,k)$ satisfies the subadditive inequality
\begin{align}\label{eq:subadditive}
\Phi(t,k)\leq \Phi(t,k-\ell)+\Phi(t,\ell).
\end{align}

Now, we are in the position to define $X^r_{\tau}$ which is defined by
\begin{align*}
\|f\|_{X^r_{\tau}}=\|f_\Phi\|_{H^{r,0}}.
 \end{align*}
We say that  a function $f$ belongs to Gevrey class $\f32$ if $\|f\|_{X^r_{\tau}}<+\infty$.

 Moreover, we need to deal with some Gevrey class functions defined on the boundary. Thus, we introduce the following functional space:
\beno
|f|_{X^r_{\tau}}=\|f_\Phi\|_{H^r_x(\mathbb{T})}.
\eeno
where $f$ depends on variable $x$.

By the definition of $X^r_{\tau}$, it is easy to see that if $r'\geq r,$ then $\|\cdot\|_{X^{r'}_{\tau}}\geq \|\cdot\|_{X^{r}_{\tau}}.$  For simplicity, we drop subscript $\tau$ in the notations $\|f\|_{X^r_{\tau}}, |f|_{X^r_{\tau}}$ etc. In the sequel, we always take 
\beno
\tau(t)= 1-\la t ,  
\eeno
with $\la \geq 1$ determined later. Thus, if we take $t$ small enough, we have $\tau>0$.

\medskip

In the following, we present some lemmas on product estimates in Gevrey class and the readers can refer to Lemma 2.1-2.3 in \cite{WWZ} for details. The first lemma is the commutator estimate in Sobolev space:
\begin{lemma}\label{lem:com-S}
Let $r\ge 0,~s_1>\f32$, $s>\f12$ and $0\leq\delta\leq 1$. Then it holds that
\begin{align*}
&\big\|[\langle D\rangle^r,f]\pa_xg\big\|_{L^2_x}\leq C\|f\|_{H^{s_1}_x}\|g\|_{H^r_x}+C\|f\|_{H^{r+1-\d}_x}\|g\|_{H^{s+\d}_x}.
\end{align*}

\end{lemma}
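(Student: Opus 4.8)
The statement to prove is Lemma~\ref{lem:com-S}, the commutator estimate
\[
\big\|[\langle D\rangle^r,f]\pa_xg\big\|_{L^2_x}\leq C\|f\|_{H^{s_1}_x}\|g\|_{H^r_x}+C\|f\|_{H^{r+1-\d}_x}\|g\|_{H^{s+\d}_x},
\]
for $r\ge 0$, $s_1>\tfrac32$, $s>\tfrac12$, $0\le\d\le1$.

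The plan is to work on the Fourier side. Writing $\widehat{[\langle D\rangle^r,f]\pa_xg}(k)=\sum_{\ell}\big(\langle k\rangle^r-\langle \ell\rangle^r\big)\widehat f(k-\ell)\,i\ell\,\widehat g(\ell)$, the key is the elementary mean-value-type inequality
\[
\big|\langle k\rangle^r-\langle\ell\rangle^r\big|\le C\,|k-\ell|\big(\langle k-\ell\rangle^{r-1}+\langle\ell\rangle^{r-1}\big),
\]
valid for $r\ge0$ (for $r\le1$ only the first factor is needed; for $r\ge1$ one interpolates using $|\langle k\rangle^{r-1}-\langle\ell\rangle^{r-1}|$ or simply bounds $\langle k\rangle^{r-1}\lesssim\langle k-\ell\rangle^{r-1}+\langle\ell\rangle^{r-1}$). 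Substituting, the commutator symbol is controlled by $|k-\ell|\,\langle k-\ell\rangle^{r-1}\,|\ell|\,|\widehat f(k-\ell)|\,|\widehat g(\ell)|$ plus $|k-\ell|\,|\ell|\,\langle\ell\rangle^{r-1}\,|\widehat f(k-\ell)|\,|\widehat g(\ell)|$, i.e. after crude bookkeeping by $\langle k-\ell\rangle^{r}|\widehat f(k-\ell)|\cdot\langle\ell\rangle\,|\widehat g(\ell)|$ and $\langle k-\ell\rangle^{2-\d}|\widehat f(k-\ell)|\cdot\langle\ell\rangle^{r-1+\d+\text{(loss $\le1$)}}|\widehat g(\ell)|$ respectively; the precise split of the derivative powers is chosen to match the two terms on the right-hand side.

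Then I would split the frequency sum into the two Littlewood--Paley-type regimes $|k-\ell|\le \tfrac12|\ell|$ (high-frequency in $g$) and $|k-\ell|\ge\tfrac12|\ell|$, equivalently $\langle k\rangle\sim\langle\ell\rangle$ versus $\langle k\rangle\lesssim\langle k-\ell\rangle$. In the first regime $\langle k-\ell\rangle^{r-1}\lesssim\langle k-\ell\rangle^{r-1}$ is harmless and the $f$-factor can be put in $H^{s_1}_x$ with $s_1>\tfrac32$ using that $\sum_{j}\langle j\rangle^{-2s_1}<\infty$ (Cauchy--Schwarz in the convolution, standard Young/Schur argument), while $g$ keeps all $r$ derivatives, producing $\|f\|_{H^{s_1}}\|g\|_{H^r}$. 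In the second regime $\langle\ell\rangle\lesssim\langle k-\ell\rangle$, so all surplus derivatives can be moved onto $f$: one is left with $\langle k-\ell\rangle^{r+1-\d}|\widehat f(k-\ell)|$ against $\langle\ell\rangle^{s+\d}|\widehat g(\ell)|$ — here the role of $\d$ is exactly to redistribute one derivative, and the requirement $s>\tfrac12$ makes $\langle\ell\rangle^{-s}$ (or rather $\langle\ell\rangle^{-(s+\d)}$ weighting the other factor appropriately) $\ell^2$-summable after the convolution estimate, giving $\|f\|_{H^{r+1-\d}}\|g\|_{H^{s+\d}}$. Summing a geometric-type bound over dyadic pieces (or applying Schur's test directly to the two kernels) and taking $\ell^2_k$ norms via Minkowski/Young for convolutions closes the estimate.

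The only genuinely delicate point is the bookkeeping of powers of $\langle k\rangle$ when $r<1$: then $\langle k\rangle^{r-1}$ has a \emph{negative} exponent, so one cannot blithely write $\langle k\rangle^{r-1}\lesssim\langle k-\ell\rangle^{r-1}+\langle\ell\rangle^{r-1}$ in the wrong direction; instead one uses $|\langle k\rangle^r-\langle\ell\rangle^r|\le C|k-\ell|^r$ directly (since $x\mapsto x^r$ is $r$-Hölder for $r\le1$) and then $|k-\ell|^r\le|k-\ell|\langle k-\ell\rangle^{r-1}$, which keeps the $g$-derivative count at exactly $1$ as needed for the first right-hand term and at $r$ for distributing to $f$. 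I expect this case-splitting on the size of $r$, together with carefully choosing whether the extra $1-\d$ derivative sits with $|k-\ell|$ or is traded through $\langle\ell\rangle^{\d}$, to be the main (though still routine) obstacle; everything else is a standard paraproduct/Schur argument. I would conclude by remarking that the two terms are exactly those needed later, the first for the ``main'' paraproduct contribution and the second with the flexible parameter $\d$ to absorb a derivative onto the smoother factor.
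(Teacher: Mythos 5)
The paper does not prove this lemma itself; it refers the reader to Lemmas 2.1--2.3 of \cite{WWZ}, so there is no in-paper argument to compare against. Your Fourier-side approach (write the commutator symbol, bound $\langle k\rangle^r-\langle\ell\rangle^r$ by a mean-value-type estimate, split into the two modulation regimes, close by Cauchy--Schwarz/Schur using $s_1>\tfrac32$ and $s>\tfrac12$) is the standard and correct route, and the key steps you describe do yield the two right-hand terms. The high-modulation regime $\langle\ell\rangle\lesssim\langle k-\ell\rangle$ with $\langle\ell\rangle\le\langle\ell\rangle^{\d}\langle k-\ell\rangle^{1-\d}$ gives $\|f\|_{H^{r+1-\d}}\|g\|_{H^{s+\d}}$, and the low-modulation regime gives $\|f\|_{H^{s_1}}\|g\|_{H^{r}}$; this is exactly the structure needed.

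One correction to the $r<1$ discussion at the end: the H\"older bound $|\langle k\rangle^r-\langle\ell\rangle^r|\le C|k-\ell|^r$, together with $|k-\ell|^r\lesssim|k-\ell|\langle k-\ell\rangle^{r-1}$, is the right tool only in the \emph{high}-modulation regime, where the output derivative $\langle k-\ell\rangle^{r}\,|\ell|\lesssim\langle k-\ell\rangle^{r+1-\d}\langle\ell\rangle^{\d}$ lands on $f$. In the \emph{low}-modulation regime $|k-\ell|\le\tfrac12|\ell|$ (so $\langle k\rangle\sim\langle\ell\rangle$), the H\"older bound is not sharp enough to produce $\|f\|_{H^{s_1}}\|g\|_{H^r}$: it only puts $|\ell|$ on $g$, not $\langle\ell\rangle^r$, and the leftover $\langle k-\ell\rangle^r$ on $f$ cannot be traded for $\langle\ell\rangle^{r-1}$ when $r<1$. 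Instead, in that regime use concavity of $x\mapsto x^r$ directly: for $\langle k\rangle\sim\langle\ell\rangle$ one has $|\langle k\rangle^r-\langle\ell\rangle^r|\le C\langle\ell\rangle^{r-1}|k-\ell|$ regardless of whether $r\ge1$ or $r<1$, which yields the needed $\langle k-\ell\rangle\langle\ell\rangle^{r}$ symbol bound uniformly in $r\ge0$. With that fix the argument closes cleanly; the rest of your proposal is correct and is presumably the same as the cited proof.
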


In Gevrey class, we have
\begin{lemma}\label{lem:product-Gev}
Let $r\ge 0$ and $s> \f12$. Then  it holds that
\beno
|fg|_{X^r}\leq C  |f|_{X^{s}}   |g |_{X^r}  +  C |f |_{X^r}  |g |_{X^{s}}.
 \eeno
\end{lemma}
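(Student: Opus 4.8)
The statement to prove is Lemma \ref{lem:product-Gev}, the Gevrey-class product estimate
\[
|fg|_{X^r}\leq C|f|_{X^{s}}|g|_{X^r}+C|f|_{X^r}|g|_{X^{s}},\qquad r\geq 0,\ s>\tfrac12.
\]

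\textbf{Overall approach.} The plan is to transfer the estimate to the Fourier side and exploit the subadditivity \eqref{eq:subadditive} of the weight $\Phi(t,k)=\tau(t)\langle k\rangle^{2/3}$, which is exactly what makes the Gevrey weight behave like a Banach-algebra weight. Writing $\widehat{fg}(k)=\sum_{\ell}\widehat f(k-\ell)\widehat g(\ell)$, the weighted coefficient is
\[
e^{\Phi(t,k)}\widehat{fg}(k)=\sum_{\ell}e^{\Phi(t,k)}\widehat f(k-\ell)\widehat g(\ell)\leq \sum_{\ell}e^{\Phi(t,k-\ell)}|\widehat f(k-\ell)|\,e^{\Phi(t,\ell)}|\widehat g(\ell)|,
\]
using \eqref{eq:subadditive} and $\tau(t)\geq 0$. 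Thus if I set $F:=\mathcal F^{-1}(e^{\Phi(t,\cdot)}|\widehat f|)$ and $G:=\mathcal F^{-1}(e^{\Phi(t,\cdot)}|\widehat g|)$, then $|fg|_{X^r}=\|\langle k\rangle^r e^{\Phi}\widehat{fg}\|_{\ell^2_k}$ is bounded by the $H^{r,0}$-type norm of a function whose Fourier coefficients are dominated by the convolution $(e^{\Phi}|\widehat f|)*(e^{\Phi}|\widehat g|)$. Since $\|F\|_{H^r_x}=\|f\|_{X^r}$ and similarly for $G$, and since all Fourier coefficients are now nonnegative, the problem reduces to the classical fractional-Leibniz / Kato–Ponce-type bilinear estimate in the ordinary Sobolev space $H^r_x(\mathbb{T})$:
\[
\|FG\|_{H^r_x}\leq C\|F\|_{H^s_x}\|G\|_{H^r_x}+C\|F\|_{H^r_x}\|G\|_{H^s_x},\qquad s>\tfrac12.
\]

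\textbf{Key steps, in order.} First I would record the Fourier representation and invoke \eqref{eq:subadditive} together with $\tau(t)\ge0$ to get the pointwise (in $k$) domination displayed above; this is the only place the Gevrey structure is used. Second, I would introduce the auxiliary functions $F,G$ with nonnegative Fourier coefficients and note that $|fg|_{X^r}\leq \|FG\|_{H^r_x}$ while $|f|_{X^{r'}}=\|F\|_{H^{r'}_x}$, $|g|_{X^{r'}}=\|G\|_{H^{r'}_x}$ for every $r'$. Third, I would prove the Sobolev bilinear estimate for $F,G$: split $\langle k\rangle^r\lesssim \langle k-\ell\rangle^r+\langle\ell\rangle^r$ (valid since $r\ge0$), so that $\langle k\rangle^r\widehat{FG}(k)$ is controlled by $((\langle\cdot\rangle^r e^{\Phi}|\widehat f|)*(e^{\Phi}|\widehat g|))(k)+((e^{\Phi}|\widehat f|)*(\langle\cdot\rangle^r e^{\Phi}|\widehat g|))(k)$; then apply Young's convolution inequality in the form $\|a*b\|_{\ell^2}\leq\|a\|_{\ell^2}\|b\|_{\ell^1}$ and bound the $\ell^1$ factor $\|\langle\cdot\rangle^s\cdot\langle\cdot\rangle^{-s}(e^{\Phi}|\widehat g|)\|_{\ell^1}\leq\|\langle\cdot\rangle^{-s}\|_{\ell^2}\,\|g\|_{X^s}$ by Cauchy–Schwarz, which converges precisely because $s>\tfrac12$ (here we use $\ell^2(\mathbb Z)$, so $\sum_k\langle k\rangle^{-2s}<\infty$). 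Summing the two symmetric contributions gives the claimed bilinear bound. This is exactly the argument behind Lemma 2.1--2.3 of \cite{WWZ} referenced above, so one could alternatively just cite it; I would include the short proof for completeness.

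\textbf{Main obstacle.} There is no serious analytic difficulty here — the whole point is that $\Phi$ is subadditive and nonnegative, so the Gevrey weight is ``transparent'' to the product estimate and the result is morally a Sobolev Banach-algebra statement on the torus. The only points requiring a little care are: (i) keeping track that $\tau(t)=1-\lambda t\ge0$ on the time interval under consideration, so that passing from $e^{\Phi(t,k)}$ to the product $e^{\Phi(t,k-\ell)}e^{\Phi(t,\ell)}$ is a genuine upper bound and not merely an equality of signs; (ii) the low-regularity endpoint, where one needs $s>\tfrac12$ strictly so that $\langle\cdot\rangle^{-s}\in\ell^2(\mathbb Z)$ — this is the sharp threshold and the reason the hypothesis is stated as $s>\tfrac12$; and (iii) the elementary inequality $\langle k\rangle^r\lesssim_r\langle k-\ell\rangle^r+\langle\ell\rangle^r$ for $r\ge0$, which handles the ``moving derivatives onto the rougher factor'' step. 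Beyond these bookkeeping points the proof is a two-line convolution estimate.
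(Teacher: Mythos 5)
Your proof is correct. The paper itself does not prove this lemma (it simply refers to Lemmas 2.1--2.3 of \cite{WWZ}), and your Fourier-side convolution argument — subadditivity of $\Phi$ together with $\tau\geq 0$ to reduce to the nonnegative-coefficient majorants $F,G$, then the splitting $\langle k\rangle^r\lesssim\langle k-\ell\rangle^r+\langle\ell\rangle^r$ for $r\geq 0$ followed by Young's inequality and Cauchy--Schwarz with $\sum_\ell\langle\ell\rangle^{-2s}<\infty$ for $s>\tfrac12$ — is exactly the standard argument by which such Gevrey product estimates are established.
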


For the commutator in Gevrey class, we have
\begin{lemma}\label{lem:com-Gev}
Let $r\ge 0,~s_1>\f32$, $s>\f12$ and $0\leq\delta\leq 1$. Then it holds that
\begin{align*}
&\|(f\pa_xg)_{\Phi}-f\pa_xg_{\Phi}\|_{H^r_x}\leq C|f|_{X^{s_1}}|g|_{X^{r+\f23}}+C|f|_{X^{r+1-\delta}}|g|_{X^{s+\delta}}.
\end{align*}

\end{lemma}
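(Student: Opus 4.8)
\textbf{Proof proposal for Lemma \ref{lem:com-Gev}.}

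The plan is to reduce the Gevrey commutator estimate to the Sobolev commutator estimate of Lemma \ref{lem:com-S} by conjugating with the Fourier multiplier $e^{\Phi(t,D_x)}$ and exploiting the subadditivity \eqref{eq:subadditive}. First I would write everything on the Fourier side: with $\Phi(t,k)=\tau(t)\langle k\rangle^{2/3}$, the symbol of $(f\pa_x g)_\Phi - f\pa_x g_\Phi$ acting on the pair $(f,g)$ has kernel
\begin{align*}
\widehat{(f\pa_x g)_\Phi}(k) - \widehat{f\pa_x g_\Phi}(k) = \sum_{\ell} \widehat f(k-\ell)\,(i\ell)\,\widehat g(\ell)\,\bigl(e^{\Phi(t,k)} - e^{\Phi(t,\ell)}\bigr).
\end{align*}
So the whole point is to control the factor $e^{\Phi(t,k)}-e^{\Phi(t,\ell)}$. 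Using $|e^a - e^b| \le |a-b|\,(e^a + e^b)$ together with the subadditive bound $\Phi(t,k) \le \Phi(t,k-\ell)+\Phi(t,\ell)$, one gets
\begin{align*}
\bigl|e^{\Phi(t,k)}-e^{\Phi(t,\ell)}\bigr| \le |\Phi(t,k)-\Phi(t,\ell)|\,\bigl(e^{\Phi(t,k-\ell)}e^{\Phi(t,\ell)} + e^{\Phi(t,\ell)}\bigr) \le C\,|\Phi(t,k)-\Phi(t,\ell)|\,e^{\Phi(t,k-\ell)}e^{\Phi(t,\ell)},
\end{align*}
since $\Phi \ge 0$. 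The key elementary inequality is then the symbol bound $|\langle k\rangle^{2/3} - \langle\ell\rangle^{2/3}| \le C\langle k-\ell\rangle^{2/3}$ (subadditivity of $x\mapsto x^{2/3}$ up to constants), so that $|\Phi(t,k)-\Phi(t,\ell)| \le C\tau(t)\langle k-\ell\rangle^{2/3} \le C\langle k-\ell\rangle^{2/3}$ for $t$ in the relevant range where $0\le\tau\le 1$.

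Next I would absorb this extra weight $\langle k-\ell\rangle^{2/3}$ into the $f$-argument. Define $\tilde f$ by $\widehat{\tilde f}(k) = \langle k\rangle^{2/3} e^{\Phi(t,k)}\widehat f(k)$, so that $\|\tilde f\|_{H^r_x} = \|f\|_{X^{r+2/3}}$ in the current notation (the extra $2/3$ derivatives land exactly where the statement wants them). Then the displayed kernel is bounded in absolute value by the kernel of $[\langle D\rangle^r, \tilde f\,]\pa_x g_\Phi$-type expression: more precisely, after the conjugation the commutator $\|(f\pa_x g)_\Phi - f\pa_x g_\Phi\|_{H^r_x}$ is dominated by an expression of the form $\bigl\|[\langle D\rangle^r, \tilde f\,]\,\pa_x (g_\Phi)\bigr\|_{L^2_x}$ plus lower order pieces, where I apply Lemma \ref{lem:com-S} with the given parameters $s_1 > 3/2$, $s>1/2$, $0\le\delta\le 1$. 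Lemma \ref{lem:com-S} gives
\begin{align*}
\bigl\|[\langle D\rangle^r, \tilde f\,]\pa_x(g_\Phi)\bigr\|_{L^2_x} \le C\|\tilde f\|_{H^{s_1}_x}\|g_\Phi\|_{H^r_x} + C\|\tilde f\|_{H^{r+1-\delta}_x}\|g_\Phi\|_{H^{s+\delta}_x},
\end{align*}
and translating back via $\|\tilde f\|_{H^{s_1}_x} = \|f\|_{X^{s_1+2/3}}$... here I need to be slightly careful: the statement has $|f|_{X^{s_1}}$, not $|f|_{X^{s_1+2/3}}$, in the first term, so the $2/3$-shift should be placed on $g$ in that term and on $f$ in the other, according to which factor the high frequency $k-\ell$ versus $\ell$ lands on. The honest way is to split the frequency sum into the regions $|\ell| \lesssim |k-\ell|$ and $|\ell| \gtrsim |k-\ell|$: in the first region the extra $\langle k-\ell\rangle^{2/3}$ is comparable to putting $2/3$ derivatives on the low-frequency-of-$g$ factor is wrong — it goes on $f$ — giving the term $|f|_{X^{r+1-\delta}}|g|_{X^{s+\delta}}$ after using $2/3 \le 1-\delta$ is not automatic, so one keeps it as $|f|_{X^{s_1}}|g|_{X^{r+2/3}}$; in the second region, $\langle k-\ell\rangle^{2/3} \lesssim \langle\ell\rangle^{2/3}$ and the $2/3$ goes onto $g$, producing $|f|_{X^{r+1-\delta}}|g|_{X^{s+\delta}}$ after redistributing derivatives exactly as in the Sobolev lemma. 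This case split is the mechanism that produces the asymmetric pair of terms in the statement.

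I expect the main obstacle to be purely bookkeeping: correctly distributing the gained weight $\langle k-\ell\rangle^{2/3}$ between the two factors in the two frequency regimes so that the output matches \emph{exactly} the claimed norms $|f|_{X^{s_1}}|g|_{X^{r+2/3}}$ and $|f|_{X^{r+1-\delta}}|g|_{X^{s+\delta}}$ — in particular making sure the $2/3$ in $X^{r+2/3}$ is not accidentally inflated to $1$, and that the $\delta$-flexibility from Lemma \ref{lem:com-S} is preserved through the conjugation. There is no genuine analytic difficulty beyond Lemma \ref{lem:com-S} itself: the conjugation trick via subadditivity of $\Phi$ is standard for Gevrey-type commutators, and the elementary inequalities $|e^a-e^b|\le|a-b|(e^a+e^b)$ and $|\langle k\rangle^{2/3}-\langle\ell\rangle^{2/3}|\le C\langle k-\ell\rangle^{2/3}$ are the only non-trivial pointwise estimates, both elementary. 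One should also record that the constant $C$ is uniform for $t$ such that $\tau(t)=1-\lambda t \in [0,1]$, which is the regime used throughout the paper. Since the reference \cite{WWZ} proves this exact lemma, I would keep the write-up short and cite it after indicating the conjugation reduction.
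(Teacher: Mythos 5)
Your high-level plan (pass to the Fourier side, bound $e^{\Phi(t,k)}-e^{\Phi(t,\ell)}$ using $\abs{e^a-e^b}\le\abs{a-b}(e^a+e^b)$ and the subadditivity of $\Phi$, then split frequencies) is the right starting point and indeed matches the paper's strategy of reducing to the machinery behind Lemma~\ref{lem:com-S}; but two steps in the middle are genuinely wrong, and your last paragraph already signals that you sensed this.

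First, the pointwise bound $\abs{\langle k\rangle^{2/3}-\langle\ell\rangle^{2/3}}\le C\langle k-\ell\rangle^{2/3}$ is too weak to produce the first term $|f|_{X^{s_1}}|g|_{X^{r+2/3}}$. Run your own computation in the region $\abs\ell>2\abs{k-\ell}$ (high frequency on $g$): there $\langle k\rangle\sim\langle\ell\rangle$, so the weight you get is $\langle k-\ell\rangle^{2/3}\langle\ell\rangle^{r+1}$, and Young's inequality returns $|f|_{X^{s_1'}}|g|_{X^{r+1}}$, not $|g|_{X^{r+2/3}}$. The extra $1/3$ cannot be shaved off with subadditivity alone, because $\langle k-\ell\rangle^{-1/3}\ge\langle\ell\rangle^{-1/3}$ in this region, i.e.\ the inequality goes the wrong way. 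What is actually needed here is the mean value (or integral-remainder) refinement
\begin{align*}
\abs{\langle k\rangle^{2/3}-\langle\ell\rangle^{2/3}}\;\le\;C\,\langle k-\ell\rangle\,\min\bigl(\langle k\rangle,\langle\ell\rangle\bigr)^{-1/3},
\end{align*}
which in the high-$g$ region gives $\langle k-\ell\rangle\langle\ell\rangle^{-1/3}$ and hence the weight $\langle k-\ell\rangle\,\langle\ell\rangle^{r+2/3}$; then Young with $A\in\ell^1$ ($s_1>3/2$) and $B\in\ell^2$ gives exactly $|f|_{X^{s_1}}|g|_{X^{r+2/3}}$. Conversely, in the region $\abs\ell\le2\abs{k-\ell}$ (high frequency on $f$) one should \emph{not} try to use the commutator gain at all: the crude bound $\abs{e^a-e^b}\le e^a+e^b\le 2e^{\Phi(t,k-\ell)}e^{\Phi(t,\ell)}$ already suffices, because then $\langle k\rangle^r\abs\ell\lesssim\langle k-\ell\rangle^{r+1-\delta}\langle\ell\rangle^{\delta}$ for any $\delta\in[0,1]$, giving $|f|_{X^{r+1-\delta}}|g|_{X^{s+\delta}}$ with $s>1/2$. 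Your proposal applies the subadditivity gain uniformly in both regions, which in region~2 loses $1/3$ derivative on $g$ and in region~1 forces an undesired extra $2/3$ onto $f$.

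Second, the claimed reduction to Lemma~\ref{lem:com-S} via $\widehat{\tilde f}(k)=\langle k\rangle^{2/3}e^{\Phi(t,k)}\widehat f(k)$ does not work: after the exponential weights are distributed onto $f$ and $g$ via subadditivity, the Fourier kernel of $(f\pa_xg)_{\Phi}-f\pa_xg_{\Phi}$ is controlled by a \emph{product} $\sum_{\ell}\abs{\widehat{\tilde f}(k-\ell)}\abs\ell\,\abs{\widehat{g_\Phi}(\ell)}$ and carries the full weight $\langle k\rangle^r$, whereas $[\langle D\rangle^r,\tilde f]\pa_x g_\Phi$ carries the weight $\abs{\langle k\rangle^r-\langle\ell\rangle^r}$, which vanishes near the diagonal $k=\ell$. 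The former is not dominated by the latter, so Lemma~\ref{lem:com-S} cannot be invoked as a black box; the case split and Young's inequality must be carried out directly on the Gevrey commutator.

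Neither of these is a large detour from what you wrote, and the elementary ingredients you identified (the $\abs{e^a-e^b}$ bound, subadditivity of $\Phi$, $t$-uniformity for $\tau\in[0,1]$) are all correct; the gap is precisely in the choice of pointwise bound in the high-$g$ region and in the false reduction to Lemma~\ref{lem:com-S}. Since the paper simply cites \cite{WWZ} for Lemmas~\ref{lem:com-S}--\ref{lem:com-Gev}, the clean fix is to either reproduce the two-region argument with the MVT refinement, or cite \cite{WWZ} outright without the intermediate (and incorrect) conjugation claim.
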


\section{Approximate equations and Error equations}

\subsection{Approximate equations}
By Hilbert asymptotic method, we can obtain the  approximate solutions. We define approximate solutions as following
\begin{align}\label{def: app}
\left\{
\begin{aligned}
u^p(t,x,y)=&u_p^0(t,x,y)+\e^2u_p^2(t,x,y),\\
v^p(t,x,y)=&v_p^0(t,x,y)+\e^2v_p^2(t,x,y),\\
p^p(t,x,y)=&p_p^0(t,x,y)+\e^2p_p^2(t,x,y),
\end{aligned}
\right.
\end{align}
 where $(u_p^0, v_p^0, p_p^0)$ satisfies equation \eqref{eq:HyNS} and $(u_p^2, v_p^2, p_p^2)$ satisfies equation
\begin{align}\label{HyNS-O(e^2)}
\left\{
\begin{aligned}
&\pa_t u_p^2+ u_p^0\pa_x u_p^2+v_p^0\pa_y u_p^2+u_p^2\pa_x u_p^0+v_p^2\pa_y u_p^0+\pa_x p_p^2-\pa^2_y u_p^2=-\pa_x^2 u_p^0,\\
&\pa_y p_p^2=-(\pa_t v_p^0+u_p^0\pa_x v_p^0+v_p^0\pa_y v_p^0-\pa_y^2v_p^0),\\
&\pa_x u_p^2+\pa_y v_p^2=0,\\
&(u_p^2, v_p^2)|_{y=0,1}=0,\\
&u_p^2|_{t=0}=0.
\end{aligned}
\right.
\end{align}
Based on the equation of  $(u_p^0, v_p^0, p_p^0)$ and $(u_p^2, v_p^2, p_p^2)$, we deduce approximate solution $(u^p, v^p, p^p)$ which satisfies the following equation:
\begin{align}
\left\{
\begin{aligned}
&\pa_t u^p+u^p\pa_x u^p+v^p\pa_y u^p+\pa_x p^p-\Delta_\e u^p=-R_1,\\
&\e^2(\pa_t v^p+u^p\pa_x v^p+v^p\pa_y v^p-\Delta_\e v^p)+\pa_y p^p=-R_2,\\
&\pa_x u^p+\pa_y v^p=0,\\
&(u^p, v^p)|_{y=0,1}=0,\\
&(u^p, v^p)|_{t=0}=(u_0, v_0),
\end{aligned}
\right.
\end{align}
where reminder $(R_1, R_2)$ is given by
\begin{align}
R_1=&\e^4(u_p^2\pa_x u_p^2+v_p^2\pa_y v_p^2-\pa_x^2 u_p^2),\label{R_1}\\
R_2=&\e^4\Big(\pa_t v_p^2+u_p^0\pa_x v_p^2+u_p^2\pa_x v_p^0+\e^2u_p^2\pa_x v_p^2+v_p^0\pa_y v_p^2+v_p^2\pa_y v_p^0  \label{R_2}\\
\nonumber
&\quad+\e^2 v_p^2\pa_y v_p^2-\pa_x^2(v_p^0+\e^2 v_p^2)-\pa_y ^2 v_p^2\Big).
\end{align}

By the definition of $R_1$ and $R_2$, it is easy to get that
\begin{align*}
(R_1, R_2)\sim O(\e^4).
\end{align*}
 
\medskip

\subsection{Equations of error functions  }
We define error functions $(u^R, v^R, p^R)$:
\begin{align*}
u^R=u^\e-u^p,\quad v^R=v^\e-v^p,\quad p^R=p^\e-p^p.
\end{align*}
It is easy to deduce the system of error functions:
\begin{align}\label{eq:error-u}
\left\{
\begin{aligned}
&\pa_t u^R-\Delta_\e u^R+\pa_x p^R+u^\e\pa_x u^R+u^R\pa_x u^p+v^\e\pa_y u^R+v^R\pa_y u^p=R_1,\\
&\e^2(\pa_t v^R-\Delta_\e v^R+u^\e\pa_x v^R+u^R\pa_x v^p+v^\e\pa_y v^R+v^R\pa_y v^p)+\pa_y p^R=R_2,\\
&\pa_x u^R+\pa_y v^R=0,\\
&(u^R,v^R)|_{y=0}=(u^R,v^R)|_{y=1}=0,\\ 
&(u^R,v^R)|_{t=0}=0.
\end{aligned}
\right.
\end{align}
For convenience, we rewrite \eqref{eq:error-u} as
\begin{align}\label{eq: error-(u,v)-1}
\left\{
\begin{aligned}
&\pa_t u^R-{\Delta_\e u^R}+u^p\pa_x u^R+u^R\pa_x u^p+v^R\pa_y u^p+v^p\pa_y u^R+\pa_x p^R=\mathcal{N}_u+R_1,\\
&\e^2(\pa_t v^R-\Delta_\e v^R+u^p\pa_x v^R+u^R\pa_x v^p+v^R\pa_y v^p+v^p\pa_y v^R)+\pa_y p^R=\e^2\mathcal{N}_v+R_2,\\
&\pa_x u^R+\pa_y v^R=0 ,\\
&(u^R,v^R)|_{y=0}=(u^R,v^R)|_{y=1}=0,\\
&(u^R,v^R)|_{t=0}=0.
\end{aligned}
\right.
\end{align}
Here $(\mathcal{N}_u,\mathcal{N}_v)$ is nonlinear term  given  by
\begin{align}\label{def: (N_u, N_v)}
\mathcal{N}_u=&-(u^R\pa_x u^R+v^R\pa_y u^R),\quad \mathcal{N}_v=-(u^R\pa_x v^R+v^R\pa_y v^R).
\end{align}

\medskip

Based on the above system, we get the equations of the vorticity $\om^R=\pa_y u^R-\e^2 \pa_x v^R$:
 \begin{align}\label{eq: om^R}
&\pa_t \om^R-\Delta_\e\om^R+u^p\pa_x \om^R+u^R\pa_x \om^p+v^p\pa_y \om^R+v^R\pa_y \om^p \\
&\quad=\pa_y \mathcal{N}_u-\e^2\pa_x \mathcal{N}_v+\e^2 f_1+f_2,\nonumber
 \end{align}
 where  $f_1, f_2$ are defined by
 \begin{align}
&f_1=-(u^R\pa_x^2 v^p+v^R\pa_x\pa_y v^p),\label{eq: f_1}\\
&f_2= \pa_y R_1-\e^2\pa_x R_2,\label{eq: f_2}\\
&\om^p=\pa_y u^p.\label{eq: om-R}
\end{align}


Moreover, following the calculations in \cite{WWZ}, we can obtain the boundary conditions of $\om^R$:
\begin{align}
&(\pa_y+\e |D|)\om^R|_{y=0}=\pa_y(\Delta_{\e,D})^{-1}(f-\mathcal{N})|_{y=0}+\f1 {2\pi}\int_{\cS}\pa_tu^Rdxdy,\label{BC: om-0}\\
&(\pa_y-\e |D|)\om^R|_{y=1}=\pa_y(\Delta_{\e,D})^{-1}(f-\mathcal{N})|_{y=1}+\f 1 {2\pi}\int_{\cS}\pa_tu^Rdxdy,\label{BC: om-1}
\end{align}
where
\begin{align}
 \mathcal{N}=&\pa_y \mathcal{N}_u-\e^2\pa_x \mathcal{N}_v=-u^R\pa_x \om^R  -v^R\pa_y \om^R.\label{eq:N1}\\
f=&f_3-\e^2 f_1-f_2,\label{eq: f}\\
f_3=& u^p\pa_x \om^R+u^R\pa_x \om^p+v^p\pa_y \om^R+v^R\pa_y \om^p,\quad \om^p=\pa_y u^p.\label{eq: f_3}
\end{align}

%
%

\medskip

\subsection{Equations of stream function}
Thanks to $\pa_x u^R+\pa_yv^R=0$ and $v^R|_{y=0,1}=0$, there exists a stream function $\phi$ satisfying the following system:  
\begin{align}\label{def: (u^R, v^R)}
-\pa_x\phi=v^R,\quad \pa_y\phi=u^R-\f1{2\pi}\int_{\cS} u^Rdxdy,
\end{align}
Since $\int_{\mathbb{T}} v^Rdx=0,$ the function $\phi$ is periodic in $x$. Thanks to $\pa_x\phi|_{y=0,1}=0$ and $\phi(1,x)-\phi(0,x)=0$, we may assume that $\phi|_{y=0,1}=0$. Thus, there holds that 
\begin{align}\label{eq: tri_e phi=om^R}
\Delta_\e \phi=\om^R\quad {\rm{in}}\,\,\cS,\quad \phi|_{y={0,1}}=0.
\end{align}
Taking \eqref{def: (u^R, v^R)} and \eqref{eq: tri_e phi=om^R} into \eqref{eq: om^R} and using the boundary condition $(u^R, v^R)|_{y=0,1}=0$, 
 we obtain
\begin{align}\label{eq: phi}
\left\{
\begin{aligned}
&(\pa_t -\Delta_\e)\Delta_\e\phi+u^p\pa_x \Delta_\e\phi+v^p\pa_y \Delta_\e\phi+\pa_y\phi\pa_x \om^p-\pa_x\phi\pa_y \om^p\\
&\qquad\qquad\qquad\qquad\qquad=\pa_y \mathcal{N}_u-\e^2\pa_x \mathcal{N}_v +\e^2 f_1+f_2-C(t)\pa_x\om^p,\\
&\phi|_{y=0,1}=0,\quad \pa_y\phi|_{y=0,1}=C(t),
\end{aligned}
\right.
\end{align}
where $C(t)=\f1{2\pi}\int_{\cS} u^Rdxdy$ and $(\mathcal{N}_u, \mathcal{N}_v), ~f_1, ~f_2$ are given in \eqref{def: (N_u, N_v)}, \eqref{eq: f_1} and  \eqref{eq: f_2}.

\medskip

In the end of subsection, we state some elliptic estimates which can be got by classical theory. First, by elliptic estimate and Hardy inequality, we have
\begin{align}\label{est: na_e phi}
\|\na_\e\phi\|_{L^2}\leq C\|\varphi\om^R\|_{L^2},
\end{align}
where $\varphi(y)=y(1-y)$ and $\na_\e=(\pa_y,\e\pa_x).$

Since $(u^R,v^R)$ satisfies the following elliptic equations
\beno
  \left\{
   \begin{array}{ll}
    \Delta_\e u^R=\pa_y\om^R,\\
    u^R|_{y=0,1}=0,
   \end{array}
  \right.
  \qquad
    \left\{
     \begin{array}{ll}
      \Delta_\e v^R=-\pa_x \om^R,\\
      v^R|_{y=0,1}=0,
     \end{array}
    \right.
\eeno
we arrive at
\begin{align}\label{eq:uR-elliptic}
\big\|(u^R,\e v^R, \pa_y u^R, \e \pa_x u^R, \e\pa_y v^R,\e^2\pa_x v^R)\big\|_{X^r}\leq C\|\om^R\|_{X^r},
\end{align}
for any $r\geq0.$

%

\bigskip

\section{Estimate of $\na_\e\phi$ and $\Delta_\e\phi$ in Gevrey space}

Before giving the estimate of $\na_\e\phi$ and $\Delta_\e\phi$, we need the estimates of the reminder terms $R_1$ and $R_2$ which are defined by the approximate solution $u^p$ and $v^p$. 
For $(u^p, v^p),$ we have the following bound:
\begin{lemma}\label{lem: u^p}
Let initial data $u_0$ of \eqref{eq:HyNS} satisfy \eqref{initial-1}-\eqref{assume: convex}.
There exists a time $T_p$ such that $(u_p^i, v_p^i),~i=0,2$ defined in \eqref{eq:HyNS} and \eqref{HyNS-O(e^2)} have the following estimates 
\begin{align*}
&\|v_p^0\|_{X^{11}}+\|(u_p^0, \e v_p^0)\|_{X^{12}}+\|\pa_y u_p^0\|_{X^{12}}+\|\pa_y^3 u_p^0\|_{X^8}\leq C,
\\
&\|v_p^2\|_{X^9}+\|(u_p^2, \e v_p^2)\|_{X^{10}}+\|\pa_y u_p^2\|_{X^{10}}+\|\pa_y^3 u_p^2\|_{X^6}\leq C,
\end{align*}
for $t\in [0, T_p]$.

Moreover, according to \eqref{def: app}, it holds that
\begin{align*}
\|v^p\|_{X^9}+\|(u^p, \e v^p)\|_{X^{10}}+\|\pa_y u^p\|_{X^{10}}+\|\pa_y^3 u^p\|_{X^6}\leq C,\qquad t\in [0, T_p].
\end{align*}
and
\beno
\pa_{y} \om^p\geq  c_0 ,\qquad t\in [0, T_{p}].
\eeno
 
\end{lemma}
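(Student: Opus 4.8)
The plan is to treat the three assertions separately, in the order they are stated, reducing everything to a well-posedness theory for the hydrostatic Navier--Stokes/Prandtl system \eqref{eq:HyNS} and for the first-order correction system \eqref{HyNS-O(e^2)} in the Gevrey class $X^r$. First I would establish the estimates for $(u_p^0,v_p^0,p_p^0)$. Since the data $u_0$ satisfies the convexity condition \eqref{assume: convex} and the Gevrey bound \eqref{initial-1}, a standard (though delicate) energy method applies: one passes to the vorticity $\om_p^0=\pa_y^2 u_p^0$, uses the hydrostatic trick (testing against $\pa_x\om/\pa_y\om$, legitimate because $\pa_y\om_p^0$ stays bounded below as long as $T_p$ is small), handles the boundary-condition loss by the boundary-corrector decomposition of \cite{GMV}, and propagates the Gevrey-$\tfrac32$ radius $\tau(t)=1-\la t$ for $\la$ large — exactly the argument the paper promises in Remark \ref{rmk} and which is carried out in detail in later sections. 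This yields, for $T_p$ small enough, the bound on $\|v_p^0\|_{X^{11}}$, $\|(u_p^0,\e v_p^0)\|_{X^{12}}$, $\|\pa_y u_p^0\|_{X^{12}}$, $\|\pa_y^3 u_p^0\|_{X^8}$; the number of tangential derivatives and the $y$-regularity are chosen to match the data norm in \eqref{initial-1} minus the losses incurred in the argument, and the $\pa_y^3$ bound follows by differentiating the equation in $y$ twice and using the compatibility condition \eqref{initial-3} at the boundary.

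Next I would estimate $(u_p^2,v_p^2,p_p^2)$. The system \eqref{HyNS-O(e^2)} is \emph{linear} in $(u_p^2,v_p^2,p_p^2)$ with coefficients built from $(u_p^0,v_p^0)$ and a forcing term $-\pa_x^2 u_p^0$ (and the analogous forcing in the $p_p^2$ equation), all of which are already controlled in $X^r$ by the first step. So the same vorticity/hydrostatic-trick machinery applies, now to $\om_p^2=\pa_y^2 u_p^2$, but with two fewer derivatives available (because each application of $\pa_x^2$ to $u_p^0$ costs two tangential derivatives and the linear coupling costs the usual derivative), which is precisely why the exponents drop from $12,11,12,8$ to $10,9,10,6$. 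Since the data for $(u_p^2,v_p^2)$ vanish, these bounds are in fact uniformly small, so one may take the same $T_p$ as in step one (possibly shrinking it once more). Then the bounds for $(u^p,v^p,p^p)$ follow immediately from the definition \eqref{def: app}: $u^p=u_p^0+\e^2 u_p^2$ etc., and the $X^r$-norm is subadditive, so $\|u^p\|_{X^{10}}\le\|u_p^0\|_{X^{10}}+\e^2\|u_p^2\|_{X^{10}}\le C$, and similarly for the other quantities; the governing exponent is the smaller one, namely $10$ for $(u^p,\e v^p,\pa_y u^p)$, $9$ for $v^p$, and $6$ for $\pa_y^3 u^p$. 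Finally the lower bound $\pa_y\om^p\ge c_0$ on $[0,T_p]$: at $t=0$ one has $\pa_y\om^p|_{t=0}=\pa_y^3 u_0=\pa_y(\pa_y^2 u_0)\ge$ — wait, convexity gives $\pa_y^2 u_0\ge 2c_0$, not a sign on $\pa_y^3 u_0$; so instead one argues that $\om^p=\pa_y^2 u^p$ itself, not $\pa_y\om^p$, is bounded below: indeed by \eqref{assume: convex}, $\om^p|_{t=0}=\pa_y^2 u_0\ge 2c_0$, and by the $X^r\hookrightarrow L^\infty$ continuity in time of $\pa_y^2 u^p$ (a consequence of the bounds just proved, with $r$ large enough to control $\|\pa_y^2 u^p\|_{L^\infty_{x,y}}$), there is $T_p>0$ with $\pa_y^2 u^p\ge c_0$ on $[0,T_p]$; this is the convexity-preservation statement that the hydrostatic trick in the main argument relies on. (I interpret the displayed ``$\pa_y\om^p\ge c_0$'' as shorthand for this lower bound on $\om^p=\pa_y u^p$, i.e.\ on $\pa_y^2 u^p$, in the notation fixed in the Notations subsection where $\om^p=\pa_y^2 u^p$.)

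The main obstacle is step one: proving Gevrey-$\tfrac32$ well-posedness of \eqref{eq:HyNS} with the optimal index is itself the substance of the whole paper (it is what Remark \ref{rmk} defers to the proof of Theorem \ref{thm: main}, and what \cite{GIM} proves independently), so honestly this lemma is not self-contained — it presupposes the $\e=0$ analogue of the main estimate. In a clean write-up I would either (a) invoke \cite{GIM} directly for the $(u_p^0,v_p^0)$ bounds and then only do the linear step for $(u_p^2,v_p^2)$ here, or (b) note that the entire later analysis of this paper (Sections 7--8) goes through verbatim at $\e=0$, as flagged in Remark \ref{rmk}, and cite that. Everything after step one — the linear correction estimate, the subadditivity bookkeeping for $(u^p,v^p)$, and the short-time persistence of convexity — is routine given step one, so I would keep those parts brief and spend the exposition on making precise exactly which norm of the data in \eqref{initial-1} feeds which exponent, and on the (standard) choice of $T_p$ and $\la$.
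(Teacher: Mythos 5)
Your proposal matches the paper's approach exactly: the paper's "proof" of this lemma is a single sentence deferring to the $\e=0$ version of the main argument in Sections 4--8 (the same deferral is flagged in Remark~\ref{rmk}), and you correctly identify this as the substance of the matter and suggest either invoking \cite{GIM} or citing the later sections at $\e=0$. The routine bookkeeping you add — the linear estimate for $(u_p^2,v_p^2)$ with lower exponents, the subadditivity for $u^p=u_p^0+\e^2 u_p^2$, and the short-time persistence of $\pa_y^2 u^p\ge c_0$ from continuity in time — is all correct and is precisely what the paper leaves implicit.

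One small remark on the point where you hesitate: the paper is internally inconsistent on the meaning of $\om^p$. The Notations subsection says $\om^p=\pa_y^2 u^p$, while \eqref{eq: om-R} and \eqref{eq: f_3} both declare $\om^p=\pa_y u^p$. The latter is the one that is actually used throughout (it is the vorticity, and the hydrostatic trick needs $\pa_y\om^p>0$, which is convexity $\pa_y^2 u^p>0$), so the Notations line is a typo. Your final reading — that $\pa_y\om^p\ge c_0$ means $\pa_y^2 u^p\ge c_0$, propagated from $\pa_y^2 u_0\ge 2c_0$ by continuity in time — is the intended one, though your phrasing momentarily conflates a lower bound on $\om^p$ with one on $\pa_y\om^p$. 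The confusion originates in the source, not in your reasoning.
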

\begin{proof}
Here,  the key of this lemma is to prove the \eqref{eq:HyNS} is well-posedness in the Gevrey class $\f32$ which is the conjecture in \cite{GMV}. If we set $\e=0$ and follow the step by step  in this paper, we can get the  the conjecture proved. Here, to avoid the repeatability, we leave the proof to the readers.

\end{proof}

\medskip

Then, by the definition of $(R_1, R_2)$ in \eqref{R_1}-\eqref{R_2}, using Lemma \ref{lem:product-Gev} to get that
\begin{lemma}\label{lem: R}
It holds that
\begin{align*}
\|(R_1, R_2)\|_{X^3}\leq C\e^4,\quad \|\na(R_1, R_2)\|_{X^2}\leq C\e^4,\quad t\in [0, T_p].
\end{align*}
\end{lemma}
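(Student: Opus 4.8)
\textbf{Proof proposal for Lemma \ref{lem: R}.}
The plan is to estimate $(R_1,R_2)$ and $\na(R_1,R_2)$ directly from their explicit expressions \eqref{R_1}--\eqref{R_2} as finite sums of products of the profile components $u_p^0, v_p^0, u_p^2, v_p^2$ and their derivatives, together with the two powers of $\e$ that are already visible (the overall $\e^4$, and the extra $\e^2$ multiplying $u_p^2\pa_x v_p^2$ and $v_p^2\pa_y v_p^2$ in $R_2$). The only analytic input needed is the algebra property of the Gevrey norms $X^r$, i.e. Lemma \ref{lem:product-Gev}, applied with a fixed trade-off index $s$ with $\f12<s\le 2$; and the uniform bounds on the profiles from Lemma \ref{lem: u^p}.

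First I would record the bounds from Lemma \ref{lem: u^p} that will be used: $\|(u_p^0,\e v_p^0)\|_{X^{12}}+\|\pa_y u_p^0\|_{X^{12}}+\|\pa_y^3 u_p^0\|_{X^8}\le C$, $\|v_p^0\|_{X^{11}}\le C$, and the analogous $X^{10},X^9,X^6$ bounds for the $u_p^2,v_p^2$ pair, all on $[0,T_p]$. Since $X^r$ is increasing in $r$, every factor appearing in \eqref{R_1}--\eqref{R_2} (and in the gradients of those terms, which cost at most one derivative, still leaving indices $\ge 5$) is bounded in $X^3$, and in fact in $X^s$ for our chosen $s$. Then each summand of $R_1$ and of $R_2$ is, up to the explicit $\e$-powers, a product of two such factors; Lemma \ref{lem:product-Gev} gives $\|fg\|_{X^3}\le C|f|_{X^s}|g|_{X^3}+C|f|_{X^3}|g|_{X^s}\le C$, and likewise $\|\na(fg)\|_{X^2}\le C$ after distributing the derivative by the Leibniz rule and again invoking Lemma \ref{lem:product-Gev} at level $X^2$. (One must be slightly careful with the norm conventions: $\|\cdot\|_{X^r}$ is an $H^{r,0}$ norm with no $y$-regularity, so $\na=(\pa_x,\pa_y)$ acting on a factor must be absorbed into the profile bounds; this is fine because Lemma \ref{lem: u^p} controls $\pa_y u_p^i$ and $\pa_y^3 u_p^i$ in high $X$-norms, and $\pa_y v_p^i=-\pa_x u_p^i$ by incompressibility, so all needed $y$-derivatives of the profiles are available.) Collecting the $\e$-powers, every term in $R_1$ carries $\e^4$ and every term in $R_2$ carries at least $\e^4$, which yields $\|(R_1,R_2)\|_{X^3}\le C\e^4$ and $\|\na(R_1,R_2)\|_{X^2}\le C\e^4$ on $[0,T_p]$.

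There is really no hard step here: the lemma is a routine bookkeeping consequence of the product estimate and the profile bounds, and the statement "$(R_1,R_2)\sim O(\e^4)$" was already announced after \eqref{R_2}. The only mild subtlety — the point I would be most careful about when writing it out — is checking that the highest-order factors appearing after applying $\na$ to \eqref{R_1}--\eqref{R_2} (for instance $\pa_x^3 u_p^2$, $\pa_x\pa_y v_p^0=-\pa_x^2 u_p^0$, $\pa_y^3 v_p^2$, or $\pa_t v_p^2$) are all controlled by the $X$-norms provided in Lemma \ref{lem: u^p}; for the time derivative one uses the evolution equation \eqref{HyNS-O(e^2)} to express $\pa_t v_p^2$ (equivalently $-\pa_x\pa_t\int_0^y u_p^2$, via incompressibility) in terms of spatial derivatives of the profiles, which are again bounded. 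Granting this, the estimate closes immediately.
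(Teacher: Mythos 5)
Your proposal is correct and follows the same route as the paper, which states Lemma \ref{lem: R} with only a one-line justification (``by the definition of $(R_1,R_2)$ in \eqref{R_1}--\eqref{R_2}, using Lemma \ref{lem:product-Gev}'') and no separate proof. Your write-up simply makes explicit the bookkeeping that the paper leaves to the reader: invoking the profile bounds of Lemma \ref{lem: u^p}, the product estimate, incompressibility to convert $\pa_y v_p^i$ into $-\pa_x u_p^i$, and the evolution equation \eqref{HyNS-O(e^2)} to control $\pa_t v_p^2$.
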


\medskip

Now, we state our main result in this section:
\begin{proposition}\label{pro: (na_e phi, tri_e phi)}
There exist $0<T<\min\{T_p, \f{1}{2\lambda}\}$ and $\la_0\geq1,$ such that for any $t\in[0,T]$ and $\la\geq \la_0,$ it holds that
\begin{align*}
&\sup_{s\in[0,t]}\Big(\la\|\na_\e\phi(s)\|_{X^{\f73}}^2+\|\Delta_\e\phi(s)\|_{X^2}^2\Big) +\int_0^t \Big(\|\pa_t \na_{\e}\phi_\Phi\|^2_{H^{2,0}}+\|\na_{\e}\om^R\|^2_{L^2}\Big)ds\\
\leq&C\int_0^t\Big(\e^{-2}\|\varphi\tri_\e\phi\|_{X^2}^2+\e^{-2}\|\na_\e\phi\|_{X^2}^2+\|\Delta_\e\phi\|_{X^2}^2+\|(\mathcal{N}_u, \e\mathcal{N}_v)\|_{X^2}^2+\|\mathcal{N}\|_{L^2}^2+\e^8\Big)ds,
\end{align*}
where $\Delta_\e\phi=\om^R$, $\varphi(y)=y(1-y)$ and  $C$ is a  constant  independent of $\e$.
\end{proposition}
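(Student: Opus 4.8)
The plan is to run an energy estimate directly on the stream-function formulation \eqref{eq: phi}, pairing the equation against $-\pa_t\phi$ in the Gevrey space $X^r$ for a suitable $r$ (here the target regularities are $\na_\e\phi\in X^{7/3}$ and $\Delta_\e\phi\in X^2$, so I would work at the level where $\Delta_\e\phi_\Phi$ sits in $H^{2,0}$). The point of testing with $-\pa_t\phi$ rather than with $\om^R$ is that the time derivative of the Gevrey weight $e^{\tau(t)\langle k\rangle^{2/3}}$ produces, because $\dot\tau=-\la$, a good negative term $-\la\|\langle D_x\rangle^{1/3}(\cdot)\|^2$ on the left — this is exactly the mechanism that generates the factor $\la$ in front of $\|\na_\e\phi\|_{X^{7/3}}^2$ and the $1/\la$-type gains used later in Step (3)–(5) of the sketch. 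Concretely, after applying $e^{\Phi(t,D_x)}\langle D_x\rangle^2$ to \eqref{eq: phi} and pairing with $-\pa_t\phi_\Phi$ in $L^2_{x,y}$, the diffusion term $\Delta_\e\Delta_\e\phi$ gives, after integrating by parts in $y$ (and carefully accounting for the boundary terms coming from $\pa_y\phi|_{y=0,1}=C(t)$ and from $\om^R|_{y=0,1}$ via \eqref{BC: om-0}--\eqref{BC: om-1}), a time-derivative of $\|\na_\e\Delta_\e\phi\|^2$-type energy plus the dissipation $\int_0^t\|\pa_t\na_\e\phi_\Phi\|_{H^{2,0}}^2$ and $\int_0^t\|\na_\e\om^R\|_{L^2}^2$ on the left; the factor $\|\Delta_\e\phi\|_{X^2}^2$ on the left-hand side comes from the elliptic relation $\om^R=\Delta_\e\phi$ together with \eqref{est: na_e phi}.

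Next I would estimate the transport and stretching terms $u^p\pa_x\Delta_\e\phi+v^p\pa_y\Delta_\e\phi+\pa_y\phi\pa_x\om^p-\pa_x\phi\pa_y\om^p$ against $-\pa_t\phi_\Phi$. For the two transport terms one uses the commutator Lemma~\ref{lem:com-Gev} to move $e^{\Phi}\langle D_x\rangle^2$ past $u^p,v^p$, bounds on $(u^p,v^p)$ from Lemma~\ref{lem: u^p}, and then a Young inequality splitting the result between the dissipation $\pa_t\na_\e\phi_\Phi$ on the left and lower-order $\|\na_\e\phi\|_{X^2}$, $\|\Delta_\e\phi\|_{X^2}$ on the right. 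The delicate term is $-\pa_x\phi\pa_y\om^p$, which loses one tangential derivative; here one does \emph{not} pair with $-\pa_t\phi_\Phi$ naively but exploits the convexity $\pa_y\om^p\ge c_0$ and the "hydrostatic" structure — writing $\pa_x\phi\pa_y\om^p=\pa_y\om^p\cdot\pa_x\phi$ and recalling from the sketch that $\int_{\cS}\pa_x\phi\,\pa_y\om^p\cdot\frac{\om^R}{\pa_y\om^p}=\int_{\cS}\pa_x|\na_\e\phi|^2=0$ — so that the genuinely dangerous contribution is absorbed up to commutators between $e^{\Phi}\langle D_x\rangle^2$ and the multiplier $1/\pa_y\om^p$, which are again controlled by Lemma~\ref{lem:com-Gev} and Lemma~\ref{lem:product-Gev}. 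The right-hand side terms of \eqref{eq: phi}, namely $\pa_y\mathcal N_u-\e^2\pa_x\mathcal N_v$, $\e^2 f_1+f_2$, and $C(t)\pa_x\om^p$, are disposed of using $\|(R_1,R_2)\|_{X^3}+\|\na(R_1,R_2)\|_{X^2}\le C\e^4$ from Lemma~\ref{lem: R} (yielding the $\e^8$ in the statement after squaring), the bound $|C(t)|\le C\|u^R\|_{L^2}\le C\|\om^R\|_{L^2}$, and pairing the nonlinearities to produce $\|(\mathcal N_u,\e\mathcal N_v)\|_{X^2}^2$ and $\|\mathcal N\|_{L^2}^2$; the $f_1$ term is handled via \eqref{eq:uR-elliptic} and Lemma~\ref{lem: u^p}.

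I expect the main obstacle to be the boundary terms. Because $\pa_y\phi|_{y=0,1}=C(t)\ne 0$ and the vorticity satisfies the awkward Robin-type conditions \eqref{BC: om-0}--\eqref{BC: om-1} with $(\pa_y\pm\e|D|)\om^R$ equal to something involving $\pa_y(\Delta_{\e,D})^{-1}$ of the forcing plus $\frac1{2\pi}\int_\cS\pa_t u^R$, the integrations by parts in $y$ in the diffusion and transport estimates throw off boundary integrals at $y=0,1$ that are \emph{not} manifestly small and carry the wrong number of $\e$'s; in particular the terms $\e^{-2}\|\varphi\Delta_\e\phi\|_{X^2}^2$ and $\e^{-2}\|\na_\e\phi\|_{X^2}^2$ appearing on the right of the Proposition are precisely the residue of this process — they are the pieces that the decomposition $\phi=\phi_{slip}+\phi_{bc}$ in Sections 7--8 is designed to re-absorb, so at this stage one only needs to bound them, not eliminate them. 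The bookkeeping of these boundary contributions, the careful use of the weight $\varphi(y)=y(1-y)$ and Hardy's inequality to convert $\|\na_\e\phi\|$ into $\|\varphi\om^R\|$, and making sure every commutator lands with enough room in the dissipative norm $\pa_t\na_\e\phi_\Phi$ after a final application of Grönwall (on the small interval $T<\min\{T_p,\frac1{2\la}\}$ so $\tau>0$) is where the real work lies; the choice of $\la_0$ large is what makes the $\la$-weighted term on the left survive.
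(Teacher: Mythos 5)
Your high-level framework is right: apply $e^{\Phi}$, test with $-\pa_t\phi_\Phi$ in $H^{2,0}$, and read off the $\la$-weighted term from $\dot\tau=-\la$ acting on the Gevrey symbol. But you have three genuine misconceptions about how the estimate actually closes.

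First, the paper does \emph{not} use the hydrostatic trick in this proposition. The cancellation $\int_\cS \pa_x\phi\,\pa_y\om^p\cdot\frac{\om^R}{\pa_y\om^p}\,dxdy = 0$ that you invoke is specific to testing against $\om^R/\pa_y\om^p$, and it cannot be combined with your multiplier $-\pa_t\phi_\Phi$; the trick is deployed only later, in Proposition~\ref{pro: om-good-1}, where the slip boundary condition makes that multiplier legal. Here, the stretching term $\pa_x\phi\,\pa_y\om^p$ is estimated \emph{crudely}: $|I_2|\le C\e^{-1}\|\na_\e\phi\|_{X^2}\|\pa_t\na_\e\phi_\Phi\|_{H^{2,0}}$, paying $\e^{-1}$ to convert $\pa_x$ into $\e\pa_x$. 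Likewise for the transport term $I_1$, where the point is that $u^p/\varphi$ and $v^p/\varphi$ are bounded (since $(u^p,v^p)|_{y=0,1}=0$), and you pay $\e^{-1}$ to convert into $\na_\e$-form. These two estimates, not the boundary terms, are the source of the $\e^{-2}\|\varphi\Delta_\e\phi\|_{X^2}^2$ and $\e^{-2}\|\na_\e\phi\|_{X^2}^2$ on the right; that is permitted precisely because Proposition~\ref{pro: key} later re-absorbs them.

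Second, the boundary term from the integration by parts is $\big\langle\Delta_\e\phi_\Phi,\pa_t\pa_y\phi_\Phi\big\rangle_{H^2_x}\big|_{y=0}^{y=1}$, and the key observation is that $\pa_t\pa_y\widehat{\phi_\Phi}(k)\big|_{y=0,1}=C'(t)\d(k)$, i.e.\ only the zero Fourier mode contributes because $\pa_y\phi|_{y=0,1}=C(t)$ is $x$-independent. This reduces the boundary term to $C'(t)\langle\int_0^1\pa_y\Delta_\e\phi\,dy,1\rangle$, which, after $|C'(t)|\le\|\pa_y\om^R\|_{L^1}$, is bounded by $C\|\pa_y\om^R\|_{L^2}^2$. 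This is where the third issue arises: the term $\int_0^t\|\na_\e\om^R\|_{L^2}^2\,ds$ on the left is \emph{not} dissipation produced by the $X^2$ energy estimate (pairing $\Delta_\e\Delta_\e\phi$ against $-\pa_t\phi_\Phi$ gives $\frac{d}{dt}\|\Delta_\e\phi\|_{X^2}^2$, not a dissipative term). One must run a separate $L^2$ energy estimate on the vorticity equation for $\om^R$, where the Robin conditions \eqref{BC: om^R-0}--\eqref{BC: om^R-1} now enter, to generate the dissipation $\|\na_\e\om^R\|_{L^2}^2$ and thereby control $\|\pa_y\om^R\|_{L^2}^2$. Without this second layer, the boundary contribution is uncontrolled and the estimate does not close.
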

\begin{proof}

%
%

Acting $e^{\Phi(t, D_x)}$ on the both sides of the first equation of \eqref{eq: phi}, we get
\begin{align*}
(\pa_t+\la\D^\f23 &-\Delta_\e)\Delta_\e\phi_\Phi+(u^p\pa_x \Delta_\e\phi+v^p\pa_y \Delta_\e\phi)_\Phi+(\pa_y\phi\pa_x \om^p-\pa_x\phi\pa_y \om^p)_\Phi\\
&=\pa_y (\mathcal{N}_u)_\Phi-\e^2\pa_x (\mathcal{N}_v)_\Phi+(\e^2 f_1+f_2)_\Phi
-C(t)\pa_x \om^p_\Phi
\end{align*}
Taking $H^{2,0}$ inner product with $-\pa_t\phi_\Phi$ and using boundary conditions
\begin{align*}
\phi_\Phi|_{y=0,1}=0,\quad \pa_y\phi_\Phi|_{y=0,1}=C(t),
\end{align*}
 we integrate by parts to arrive at
 \begin{align}\label{est: (na_e phi, tri_e phi)}
 &\f12\f{d}{dt}(\la\|\na_\e\phi\|_{X^{\f73}}^2+\|\Delta_\e\phi\|_{X^2}^2)+\|\pa_t\na_\e\phi_\Phi\|_{H^{2,0}}^2-\Big\langle \Delta_\e\phi_\Phi,\pa_t\pa_y\phi_\Phi\Big\rangle_{H^{2}_x}\Big|_{y=0}^{y=1}\\
 \nonumber
 =&\Big\langle (u^p\pa_x \Delta_\e\phi+v^p\pa_y \Delta_\e\phi)_\Phi,\pa_t\phi_\Phi \Big\rangle_{H^{2,0}}+\Big\langle (\pa_y\phi\pa_x \om^p-\pa_x\phi\pa_y \om^p)_\Phi,\pa_t\phi_\Phi\Big\rangle_{H^{2,0}}\\
 \nonumber
 &+\Big\langle \pa_y (\mathcal{N}_u)_\Phi-\e^2\pa_x (\mathcal{N}_v)_\Phi,-\pa_t\phi_\Phi \Big\rangle_{H^{2,0}}+\Big\langle (\e^2 f_1+f_2)_\Phi,-\pa_t\phi_\Phi \Big\rangle_{H^{2,0}}\\
&-\Big\langle C(t)\pa_x \om^p_\Phi,-\pa_t\phi_\Phi \Big\rangle_{H^{2,0}} \nonumber
 \\
 \nonumber
  =&I_1+\cdots+I_5.
 \end{align}

Firstly, let's estimate of $I_i,~i=1,\cdots, 5$ term by term.

\underline{Estimate of $I_1.$} Since divergence free condition $\pa_x u^p+\pa_y v^p=0,$ we get
\begin{align*}
(u^p\pa_x \Delta_\e\phi+v^p\pa_y \Delta_\e\phi)_\Phi=\pa_x(u^p \Delta_\e\phi)_\Phi+\pa_y(v^p \Delta_\e\phi)_\Phi.
\end{align*}
According to $(u^p, v^p)|_{y=0,1}=0,$ we use integration by parts and Lemma \ref{lem:product-Gev} to have
\begin{align*}
I_1=&-\Big\langle (u^p\Delta_\e\phi)_\Phi,\pa_t\pa_x\phi_\Phi \Big\rangle_{H^{2,0}}-\Big\langle (v^p\Delta_\e\phi)_\Phi,\pa_t\pa_y\phi_\Phi \Big\rangle_{H^{2,0}}\\
\leq&C\Big\||\f{u^p}{\varphi}|_{X^2}|\varphi\Delta_\e\phi|_{X^2}\Big\|_{L^2_y}\|\pa_t\pa_x\phi_\Phi\|_{H^{2,0}}+C\Big\||\f{v^p}{\varphi}|_{X^2}|\varphi\Delta_\e\phi|_{X^2}\Big\|_{L^2_y}\|\pa_t\pa_y\phi_\Phi\|_{H^{2,0}}\\
\leq&C\||\f{u^p}{\varphi}|_{X^2}\|_{L^\infty_y}\|\varphi\Delta_\e\phi\|_{X^2}\|\pa_t\pa_x\phi_\Phi\|_{H^{2,0}}+C\Big\||\f{v^p}{\varphi}|_{X^2}\|_{L^\infty_y}\|\varphi\Delta_\e\phi\|_{X^2}\|\pa_t\pa_y\phi_\Phi\|_{H^{2,0}}\\
\leq&C\e^{-1}\|\varphi\Delta_\e\phi\|_{X^2}\|\pa_t\na_\e\phi_\Phi\|_{H^{2,0}}.
\end{align*}

\underline{Estimate of $I_2.$} Similarly, we write
\begin{align*}
(\pa_y\phi\pa_x \om^p-\pa_x\phi\pa_y \om^p)_\Phi=\Big(\pa_x(\pa_y\phi\om^p)-\pa_y(\pa_x\phi \om^p)\Big)_\Phi,
\end{align*}
then along with $\phi|_{y=0,1}=0$, we  use integration by parts and Lemma \ref{lem:product-Gev} to deduce
\begin{align*}
I_2=&\Big\langle\pa_x(\pa_y\phi\om^p)_\Phi-\pa_y(\pa_x\phi_\Phi \om^p)_\Phi,\pa_t\phi_\Phi\Big\rangle_{H^{2,0}}\\
=&-\Big\langle(\pa_y\phi\om^p)_\Phi,\pa_t\pa_x\phi_\Phi\Big\rangle_{H^{2,0}}+\Big\langle(\pa_x\phi \om^p)_\Phi,\pa_t\pa_y\phi_\Phi\Big\rangle_{H^{2,0}}\\
\leq&C\||\om^p|_{X^2}\|_{L^\infty_y}\|\pa_y \phi\|_{X^2}\|\pa_t\pa_x\phi_\Phi\|_{H^{2,0}}+C\||\om^p|_{X^2}\|_{L^\infty_y}\|\pa_x \phi\|_{X^2}\|\pa_t\pa_y\phi_\Phi\|_{H^{2,0}}\\
\leq&C\e^{-1}\|\na_\e\phi\|_{X^2}\|\pa_t\na_\e\phi_\Phi\|_{H^{2,0}}.
\end{align*}

\underline{Estimate of $I_3.$} Due to $\phi|_{y=0,1}=0,$ taking integration by parts, it yields that
\begin{align*}
I_3\leq C\|(\mathcal{N}_u, \e\mathcal{N}_v)\|_{X^2}\|\pa_t\na_\e\phi_\Phi\|_{H^{2,0}}.
\end{align*}

\underline{Estimate of $I_4.$} Recall $f_1$ and $f_2$ in \eqref{eq: f_1}-\eqref{eq: f_2}. According to \eqref{eq:uR-elliptic} and Lemma \ref{lem: R}, we have
\begin{align*}
I_4\leq& C(\|\e^2 f_1\|_{X^2}+\|f_2\|_{X^2})\|\pa_t\phi_\Phi\|_{H^{2,0}}\\
\leq& C(\e^2\|u^R\|_{X^2}+\e\|\e v^R\|_{X^2}+\e^4)\|\pa_t\pa_y\phi_\Phi\|_{H^{2,0}}\\
\leq& C(\e\|\Delta_\e\phi\|_{X^2}+\e^4)\|\pa_t\pa_y\phi_\Phi\|_{H^{2,0}}.
\end{align*}

\underline{Estimate of $I_5.$} Poincar\'e inequality implies
\begin{align*}
\|\pa_t\phi_\Phi\|_{H^{2,0}}\leq C\|\pa_t\pa_y\phi_\Phi\|_{H^{2,0}},
\end{align*}
for  $\phi|_{y=0,1}=0.$ Since 
\beno
|C(t)|=|\f 1 {2\pi}\int_{\cS} u^Rdxdy|\leq C\|u^R\|_{L^2}\leq C\|\om^R\|_{L^2}\leq C
\|\Delta_\e\phi\|_{L^2},
\eeno
 we get
\begin{align*}
I_5\leq&C\|\Delta_\e\phi\|_{L^2}\|\pa_t\pa_y\phi_\Phi\|_{H^{2,0}}.
\end{align*}

Collecting $I_1-I_5$ together, it holds that
\begin{align}\label{est: (na_e phi, tri_e phi)-I_1-I_5}
\nonumber
I_1+\cdots+I_5\leq&C\e^{-1}\|\varphi\Delta_\e\phi\|_{X^2}\|\pa_t\na_\e\phi_\Phi\|_{H^{2,0}}+C\e^{-1}\|\na_\e\phi\|_{X^2}\|\pa_t\na_\e\phi_\Phi\|_{H^{2,0}}\\
&+C\|(\mathcal{N}_u, \e\mathcal{N}_v)\|_{X^2}\|\pa_t\na_\e\phi_\Phi\|_{H^{2,0}}+C\|\Delta_\e\phi\|_{L^2}\|\pa_t\pa_y\phi_\Phi\|_{H^{2,0}}\\
\nonumber
&+C(\e\|\Delta_\e\phi\|_{X^2}+\e^4)\|\pa_t\pa_y\phi_\Phi\|_{H^{2,0}}\\
\nonumber
\leq&\f1{10}\|\pa_t\na_\e\phi_\Phi\|_{H^{2,0}}^2+C\Big(\e^{-2}\|\varphi\Delta_\e\phi\|_{X^2}^2+\e^{-2}\|\na_\e\phi\|_{X^2}^2+\|\Delta_\e\phi\|_{X^2}^2\Big)\\
\nonumber
&+C\Big(\|(\mathcal{N}_u, \e\mathcal{N}_v)\|_{X^2}+\e^8\Big).
\end{align}

Next, we focus on the boundary term $\Big\langle \Delta_\e\phi_\Phi,\pa_t\pa_y\phi_\Phi\Big\rangle_{H^{2}_x}\Big|_{y=0}^{y=1}$. 
First, we give the estimate of $C'(t)$. 
\begin{align*}
\int_{\cS} \pa_t u^R dxdy=\int_{\cS}\pa_y^2 u^R dxdy=\int_{\cS} \pa_y\om^R dxdy,
\end{align*}
which gives
\begin{align}\label{eq:ut-est}
\Big|\int_{\cS} \pa_tu^R dxdy\Big|\leq& \|\pa_y\om^R\|_{L^1}.
\end{align}

Owing to 
$$\pa_t\pa_y\widehat{\phi_\Phi}|_{y=0,1}(k)=C'(t)\d(k),$$
where $\d(k)$ is a dirac function and $k\in\mathbb{Z},$ we have
\begin{align*}
\Big\langle \Delta_\e\phi_\Phi,\pa_t\pa_y\phi_\Phi\Big\rangle_{H^2_x}\Big|_{y=0}^{y=1}=&\Big\langle \Delta_\e\phi\Big|_{y=0}^{y=1},C'(t)\Big\rangle_{L^2_x}=\Big\langle \int_0^1\pa_y\Delta_\e\phi dy,C'(t)\Big\rangle_{L^2_x}\\
\leq& CC'(t)\|\pa_y\om^R\|_{L^2}\leq C\|\pa_y\om^R\|_{L^2}^2,
\end{align*}
where we used \eqref{eq:ut-est} in the last step.

Putting above estimate and \eqref{est: (na_e phi, tri_e phi)-I_1-I_5} into \eqref{est: (na_e phi, tri_e phi)}, we get
\begin{align}\label{est: 1}
\f12\f{d}{dt}(\la\|\na_\e\phi\|_{X^{\f73}}^2&+\|\Delta_\e\phi\|_{X^2}^2)\nonumber\\
\leq&C\Big(\e^{-2}\|\varphi\Delta_\e\phi\|_{X^2}^2+\e^{-2}\|\na_\e\phi\|_{X^2}^2+\|(\mathcal{N}_u, \e\mathcal{N}_v)\|_{X^2}^2+\|\Delta_\e\phi\|_{X^2}^2+\|\pa_y\om^R\|^2_{L^2}+\e^8\Big) 
\end{align}

\medskip
Next, we give the estimates of $\|\pa_y\om^R\|^2_{L^2}$. Firstly, recalling the equation of $\om^R$:
\begin{align}\label{eq: om^R-1}
&\pa_t \om^R-\Delta_\e\om^R+u^p\pa_x \om^R+u^R\pa_x \om^p+v^p\pa_y \om^R+v^R\pa_y \om^p \\
&=\pa_y \mathcal{N}_u-\e^2\pa_x \mathcal{N}_v+\e^2 f_1+f_2,\nonumber
 \end{align}
 with boundary conditions
 \begin{align}
&(\pa_y+\e |D|)\om^R|_{y=0}=\pa_y(\Delta_{\e,D})^{-1}(f-\mathcal{N})|_{y=0}+\f1 {2\pi}\int_{\cS}\pa_tu^Rdxdy,\label{BC: om^R-0}\\
&(\pa_y-\e |D|)\om^R|_{y=1}=\pa_y(\Delta_{\e,D})^{-1}(f-\mathcal{N})|_{y=1}+\f 1 {2\pi}\int_{\cS}\pa_tu^Rdxdy,\label{BC: om^R-1}
\end{align}
where $f_1,~f_2, f$ and $\mathcal{N}$ are given in \eqref{eq: f_1}-\eqref{eq: f}.

Taking $L^2$ inner product with $\om^R$ on \eqref{eq: om^R-1} and integration by parts, it follows from $(\mathcal{N}_u,\e\mathcal{N}_v)|_{y=0,1}=0$ and $(u^p, v^p)|_{y=0,1}=0$ to obtain
\begin{align}\label{est: ||om^R||_L^2}
\f12\f{d}{dt}\|\om^R\|_{L^2}^2+&\|\na_\e\om^R\|_{L^2}^2-\int_{\mathbb{T}}\pa_y\om^R \om^Rdx\Big|_{y=0}^{y=1}\\
\nonumber
\leq&C\|(u^R, v^R)\|_{L^2}\|\om^R\|_{L^2}+C\|(\mathcal{N}_u,\e\mathcal{N}_v)\|_{L^2}\|\na_\e\om^R\|_{L^2}\\
\nonumber
&+C(\|\e^2 u^R\|_{L^2}+\|\e^2v^R\|_{L^2}+\e^4)\|\om^R\|_{L^2}\\
\nonumber
\leq&\f1{10}\|\na_\e\om^R\|_{L^2}^2+C(\|(\mathcal{N}_u,\e\mathcal{N}_v)\|_{L^2}^2+\|\om^R\|_{H^{1,0}}^2+\e^8).
\end{align}
For the boundary term, we use \eqref{BC: om^R-0}-\eqref{BC: om^R-1} to write
\begin{align*}
\int_{\mathbb{T}}\pa_y\om^R \om^Rdx\Big|_{y=0}^{y=1}=&\int_{\mathbb{T}}\Big(\e|D|\om^R|_{y=1}+\pa_y(\Delta_{\e,D})^{-1}(f-\mathcal{N})|_{y=1}+C(t)\Big)\om^R|_{y=1} dx\\
&-\int_{\mathbb{T}}\Big(-\e|D|\om^R|_{y=0}+\pa_y(\Delta_{\e,D})^{-1}(f-\mathcal{N})|_{y=0}+C(t)\Big)\om^R|_{y=0} dx\\
=&\int_{\mathbb{T}}(\e|D|\om^R~\om^R)|_{y=0,1}dx+C(t)\int_{\mathbb{T}}\om^R|_{y=0}^{y=1}dx\\
&+\int_{\mathcal{S}}\pa_y\Big(\pa_y(\Delta_{\e,D})^{-1}(f-\mathcal{N})\om^R\Big)dxdy=B_1+B_2+B_3.
\end{align*}

Let $y_0\in [0,1]$ so that 
\beno
\|\e|D|{\om^R}(y_0)\|_{L^2_x}\le \|\e|D|\om^{R}\|_{L^2},
\eeno
then along with Gagliardo-Nirenberg inequality
 \begin{align}\label{equality: GN}
\|g\|_{L^\infty_y}\leq C\|g\|_{L^2_y}^\f12\big(\|g\|_{L^2_y}^\f12+\|\pa_yg\|_{L^2_y}^\f12\big),
\end{align}
 it infers that
\begin{align*}
B_1=&\int_{y_0}^1\pa_y(\e|D|\om^R\om^R)dxdy+\int_{y_0}^0\pa_y(\e|D|\om^R\om^R)dxdy+2\int_{\mathbb{T}}(\e|D|\om^R~\om^R)|_{y=y_0}dx\\
\leq&C\|\e|D|\om^R\|_{L^2}\|\pa_y\om^R\|_{L^2}+C\|\e|D|\om^{R}\|_{L^2}\|\om^R\|_{L^\infty_y(L^2_x)}\\
\leq&C\e\|\om^R\|_{H^{1,0}}^2+C\e\|\pa_y\om^R\|_{L^2}^2
\end{align*}

Similarly, we use \eqref{equality: GN} and $|C(t)|\leq C\|u^R\|_{L^2}\leq C\|\om^R\|_{L^2}$ to have
\begin{align*}
B_2\leq&C|C(t)|\|\om^R\|_{L^\infty_y(L^2_x)}\leq C\|\om^R\|_{L^2}^\f32(\|\om^R\|_{L^2}^\f12+\|\pa_y\om^R\|_{L^2}^\f12)\\
\leq&\f1{10}\|\pa_y\om^R\|_{L^2}^2+C\|\om^R\|_{L^2}^2.
\end{align*}

All we left is to do $B_3.$ With the fact: operator $\pa_y(\tri_{\e,D})^{-1},~\pa_y(\tri_{\e,D})^{-1}(\pa_y,\e\pa_x)$ and $\pa_y^2(\tri_{\e,D})^{-1}$ are bounded from $L^2\to L^2,$  we have
\begin{align*}
B_3=&\int_{\mathcal{S}}\pa_y^2(\tri_{\e,D})^{-1}(f-\mathcal{N})\om^Rdxdy+\int_{\mathcal{S}}\pa_y(\tri_{\e,D})^{-1}\pa_y(v^p\om^R)\pa_y\om^Rdxdy\\
&+\int_{\mathcal{S}}\pa_y(\tri_{\e,D})^{-1}(f-\pa_y(v^p\om^R)-\pa_y \mathcal{N}_u-\e^2\pa_x \mathcal{N}_v)\pa_y\om^Rdxdy\\
\leq&C\|f-\mathcal{N}\|_{L^2}\|\om^R\|_{L^2}+C\|v^p\om^R\|_{L^2}\|\pa_y\om^R\|_{L^2}\\
&+C(\|f-\pa_y(v^p\om^R)\|_{L^2}+\|(\mathcal{N}_u,\e\mathcal{N}_v)\|_{L^2})\|\pa_y\om^R\|_{L^2}.
\end{align*}

According to the definition of \eqref{eq: f} and \eqref{eq:N1}, we have
\begin{align*}
\|f\|_{L^2}\leq &C(\|\pa_x\om^R\|_{L^2}+\|\pa_y\om^R\|_{L^2}+\|(u^R, v^R)\|_{L^2}+\e^4)\\
\leq&C(\|\om^R\|_{H^{1,0}}+\|\pa_y\om^R\|_{L^2}+\e^4),
\end{align*}
and
\begin{align*}
\|f-\pa_y(v^p\om^R)\|_{L^2}\leq& C(\|\pa_x\om^R\|_{L^2}+\|(u^R, v^R)\|_{L^2}+\e^4)\\
\leq&C(\|\om^R\|_{H^{1,0}}+\e^4),
\end{align*}
which give that
\begin{align*}
B_3\leq& C(\|\om^R\|_{H^{1,0}}+\|\pa_y\om^R\|_{L^2}+\|\mathcal{N}\|_{L^2}+\e^4)\|\om^R\|_{L^2}\\
&+C(\|\om^R\|_{H^{1,0}}+\|(\mathcal{N}_u,\e\mathcal{N}_v)\|_{L^2}+\e^4)\|\pa_y\om^R\|_{L^2}\\
\leq&\f1{10}\|\pa_y\om^R\|_{L^2}^2+C(\|\om^R\|_{H^{1,0}}^2+\|(\mathcal{N}_u,\e\mathcal{N}_v)\|_{L^2}^2+\|\mathcal{N}\|_{L^2}^2+\e^8).
\end{align*}

Summarizing $B_1-B_3$ together, we obtain
\begin{align}\label{est: ||om^R||_L^2-BC}
\Big|\int_{\mathbb{T}}\pa_y\om^R \om^Rdx\Big|_{y=0}^{y=1}\Big|\leq&(\f1{5}+C\e)\|\pa_y\om^R\|_{L^2}^2+C(\|\om^R\|_{H^{1,0}}^2+\|(\mathcal{N}_u,\e\mathcal{N}_v)\|_{L^2}^2+\|\mathcal{N}\|_{L^2}^2+\e^8).
\end{align}
Substituting \eqref{est: ||om^R||_L^2-BC} into \eqref{est: ||om^R||_L^2}, we take $\e$ small enough to arrive at
\begin{align*}
\f12\f{d}{dt}\|\om^R\|_{L^2}^2+&\f12\|\na_\e\om^R\|_{L^2}^2\\
\nonumber
\leq&C(\|(\mathcal{N}_u,\e\mathcal{N}_v)\|_{L^2}^2+\|\om^R\|_{H^{1,0}}^2+\e^8)\\
\nonumber
&+C(\|\om^R\|_{H^{1,0}}^2+\|(\mathcal{N}_u,\e\mathcal{N}_v)\|_{L^2}^2+\|\mathcal{N}\|_{L^2}^2+\e^8)\\
\nonumber
\leq&C(\|\Delta_{\e}\phi\|_{H^{1,0}}^2+\|(\mathcal{N}_u,\e\mathcal{N}_v)\|_{L^2}^2+\|\mathcal{N}\|_{L^2}^2+\e^8).
\end{align*}

Bring the above estimate into \eqref{est: 1} and  integrate time from $0$ to $t$ to get the desired results.

\end{proof}

\section{Sketch the proof to Theorem \ref{thm: main}}
In this section, we shall sketch the proof of Theorem \ref{thm: main}.  In the paper, we use the continue argument. Here, we define
\ben\label{assume: 1}
T^*\eqdef \sup\{t >0 |  \sup_{s\in[0,t]}\|\om^R\|_{X^2}\leq \mathfrak{C} \e^3\}.
\een

\subsection{The key  {\it a priori} estimates.} In this subsection, we shall present the key {\it a priori} estimates used in the proof of Theorem \ref{thm: main}.

By Proposition \ref{pro: (na_e phi, tri_e phi)}, we need the estimates of $\int_0^t (\|\na_\e\phi\|_{X^2}^2+\|\varphi\Delta_\e\phi\|_{X^2}^2)ds$ to close the energy. 
\begin{proposition}\label{pro: key}
Let $\phi$ be the solution of \eqref{eq: phi}. Then there exists $\la_0\geq1$ and $0<T<\min\{T_p, \f{1}{2\lambda}\}$ such that for $\la\geq \la_0$ and $t\in[0, T]$, it holds that
\begin{align}\label{est: linear-1}
\int_0^t (\|\na_\e\phi\|_{X^2}^2+\|\varphi\Delta_\e\phi\|_{X^2}^2)ds\leq&C\int_0^t(\|(\mathcal{N}_u,\e \mathcal{N}_v)\|_{X^2}^2+\|\e \Delta_\e \phi\|_{X^\f 53}^2+\e^8)ds,
\end{align}
 with $t\in[0, T]$.

\end{proposition}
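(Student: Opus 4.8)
\textbf{Proof strategy for Proposition \ref{pro: key}.} The plan is to realize the decomposition $\phi=\phi_{slip}+\phi_{bc}$ outlined in the sketch and to bound the two quantities $\int_0^t\|\na_\e\phi\|_{X^2}^2\,ds$ and $\int_0^t\|\varphi\Delta_\e\phi\|_{X^2}^2\,ds$ by the right-hand side of \eqref{est: linear-1}, using the elliptic estimate $\|\na_\e\phi\|_{L^2}\le C\|\varphi\om^R\|_{L^2}$ from \eqref{est: na_e phi} (and its higher-regularity analogues) to reduce everything to estimating $\varphi\Delta_\e\phi$ in Gevrey norm. First I would split $\phi=\phi_{slip}+\phi_{bc}$, where $\phi_{slip}$ solves the Orr--Sommerfeld system \eqref{eq: vorticity-good} with the artificial Navier-slip condition $\om_{slip}|_{y=0,1}=0$, forced by the same right-hand side as \eqref{eq: phi} (the nonlinear terms $\mathcal N_u,\mathcal N_v$, the remainder $f_1,f_2$, and the $C(t)\pa_x\om^p$ term). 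The Gevrey energy estimate for $\phi_{slip}$ is the content of Section 7: testing the equation against $\om_{slip}/\pa_y\om^p$ and exploiting the hydrostatic cancellation $-\int_\cS\pa_x\phi_{slip}\,\tri_\e\phi_{slip}=0$, together with the good boundary condition $\om_{slip}|_{y=0,1}=0$ which now makes the boundary integrals from integration by parts harmless, one obtains \eqref{est: 2-intro}, i.e.\ control of $\la\int_0^t(\|\om_{slip}\|_{X^{7/3}}^2+\|\na_\e\phi_{slip}\|_{X^{7/3}}^2+|\na_\e\phi_{slip}|_{y=0,1}|_{X^{7/3}}^2)\,ds$ by $\f{C}{\la}\int_0^t\|\e\tri_\e\phi\|_{X^2}^2\,ds$ plus the nonlinear and remainder contributions.

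Next I would construct the corrector $\phi_{bc}=\sum_{i=0,1}(\phi_{bc,S}^i+\phi_{bc,T}^i+\phi_{bc,R}^i)$ solving \eqref{eq: vorticity-bc} with $\pa_y\phi_{bc}|_{y=i}=h^i$, where the traces $h^i$ are to be chosen so that $\pa_y\phi_{bc}|_{y=0,1}=-\pa_y\phi_{slip}|_{y=0,1}+C(t)$, thereby making $\phi_{slip}+\phi_{bc}$ satisfy the true boundary condition of \eqref{eq: phi}. The Stokes piece $\phi_{bc,S}^i$ (Section 8.1) is solved explicitly via Fourier transform in $(t,x)$, yielding the sharp bound \eqref{est: 3-intro}; the transport corrector $\phi_{bc,T}^i$ (Section 8.2) absorbs the stretching term with a good boundary condition; and $\phi_{bc,R}^i$ (Section 8.3) solves an Orr--Sommerfeld-type system estimated exactly as $\phi_{slip}$ was, giving \eqref{est: 4-intro}. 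The map $[h^0,h^1]\mapsto R_{bc}[h^0,h^1]$ collecting the traces $\pa_y\phi_{bc,S}^i|_{y=0,1}+\pa_y\phi_{bc,R}^i|_{y=0,1}$ beyond the leading identity is shown to be small, $\int_0^t|R_{bc}[h^0,h^1]|_{X^{7/3}}^2\,ds\le \f{C}{\la^{1/2}}\int_0^t|(h^0,h^1)|_{X^{7/3}}^2\,ds$, so $(1+R_{bc})$ is invertible for $\la$ large; inverting it defines $h^i$ in terms of $-\pa_y\phi_{slip}|_{y=0,1}+C(t)$, and since $|C(t)|\le C\|\tri_\e\phi\|_{L^2}$ and $|\pa_y\phi_{slip}|_{y=0,1}|_{X^{7/3}}$ is controlled by \eqref{est: 2-intro}, one gets $\int_0^t|(h^0,h^1)|_{X^{7/3}}^2\,ds\le C\int_0^t|\na_\e\phi_{slip}|_{y=0,1}|_{X^{7/3}}^2\,ds+\cdots$. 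Feeding this into \eqref{est: 3-intro}+\eqref{est: 4-intro} bounds $\int_0^t(\|\na_\e\phi_{bc}\|_{X^{7/3}}^2+\|\varphi\tri_\e\phi_{bc}\|_{X^{7/3}}^2)\,ds$ by $\f{C}{\la^{1/2}}\int_0^t|\na_\e\phi_{slip}|_{y=0,1}|_{X^{7/3}}^2\,ds+\cdots$, and combining with \eqref{est: 2-intro} yields $\int_0^t(\|\varphi\om^R\|_{X^{7/3}}^2+\|\na_\e\phi\|_{X^{7/3}}^2)\,ds\le \f{C}{\la}\int_0^t\|\e\tri_\e\phi\|_{X^2}^2\,ds+\cdots$; restricting to the $X^2\subset X^{7/3}$ norm and absorbing the $\la^{-1}\|\e\tri_\e\phi\|_{X^2}^2$ term (it already appears on the left after using $\|\e\tri_\e\phi\|_{X^{5/3}}$ on the right of \eqref{est: linear-1}) gives the claim, with the nonlinear terms $\|(\mathcal N_u,\e\mathcal N_v)\|_{X^2}^2$ and remainder $\e^8$ collected along the way.

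The main obstacle, as in \cite{GMM1, WWZ1}, is the construction and sharp estimate of the boundary correctors: the naive Stokes corrector $\phi_{bc,S}^i$ does not by itself produce a good boundary condition for the full Orr--Sommerfeld operator because of the transport term $u^p\pa_x\tri_\e\phi+v^p\pa_y\tri_\e\phi$ and the stretching term $\pa_y\phi\,\pa_x\om^p$, and correcting these requires the three-step decomposition $\phi_{bc}=\phi_{bc,S}+\phi_{bc,T}+\phi_{bc,R}$ together with the improved control of $\pa_x\phi_{bc,S}^i$ in $X^{5/3}$ (rather than a weaker norm) — this extra tangential regularity is exactly what pushes the Gevrey index from $9/8$ down to the optimal $3/2$. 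The delicate points are: (i) obtaining the explicit Fourier-side bound \eqref{est: 3-intro} with the correct weights $\varphi^i$ and the $\la^{-1/2}$ gain; (ii) showing $R_{bc}$ is a genuine $0$-order operator with small norm, which needs the interplay between the $\langle k\rangle^{2/3}$ Gevrey weight and the $\e$-dependent scaling in $\tri_\e$; and (iii) tracking the $\e$-powers so that all remainder contributions close at order $\e^8$ uniformly in $\e$. Once Sections 7 and 8 provide these building blocks, the assembly above is routine.
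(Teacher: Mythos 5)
Your overall strategy matches the paper: decompose $\phi=\phi_{slip}+\phi_{bc}$, estimate $\phi_{slip}$ via the hydrostatic trick under Navier-slip boundary conditions (Proposition \ref{pro: om-good}), build $\phi_{bc}=\phi_{bc,S}+\phi_{bc,T}+\phi_{bc,R}$ via an explicit Stokes solver, a transport corrector, and an Orr--Sommerfeld remainder, then invert $(I+R_{bc})$ to define the traces $h^i$ (Proposition \ref{pro: phi_{bc}}), and finally assemble. That is exactly the architecture of Section 6 of the paper.

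However, there is one genuine gap in how you treat $C(t)$. You estimate $|C(t)|\le C\|\Delta_\e\phi\|_{L^2}$ and fold this into ``$\cdots$''. But $\|\Delta_\e\phi\|_{L^2}^2$ does not appear on the right of \eqref{est: linear-1} (the right-hand side carries only the $\e$-weighted $\|\e\Delta_\e\phi\|_{X^{5/3}}^2$), nor on the left (only the degenerately weighted $\|\varphi\Delta_\e\phi\|_{X^2}^2$, which does not control the unweighted quantity near the boundary). Since $|C(t)|^2$ enters the corrector estimate \eqref{est: na_e phi_app} with a prefactor only $C/\la^{1/2}$ --- not an $\e$ gain --- the crude Poincar\'e bound does not let you close. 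The paper resolves this with the additional Lemma \ref{lem: ||(u^R, e v^R)||_L^2}: a separate energy estimate on $(u^R,\e v^R)$ using the exponential-in-time weight $e^{(1-\la t)}$, which produces
\begin{align*}
|C(t)|^2\le C\int_0^t\big(\|e^{(1-\la s)}(\mathcal N_u,\e\mathcal N_v)\|_{L^2}^2+\e^8\big)\,ds
+\frac{C}{\la}\int_0^t\|\na_\e\phi\|_{X^2}^2\,ds,
\end{align*}
and it is precisely the small factor $C/\la$ in front of $\int_0^t\|\na_\e\phi\|_{X^2}^2\,ds$ that allows this term to be absorbed back into the left-hand side once the $\phi_{slip}$ and $\phi_{bc}$ bounds are summed. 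Without this weighted estimate, the $|C(t)|^2$ contribution is uncontrolled and the argument does not close; you should add this lemma (or an equivalent mechanism producing a small constant) before the assembly step.
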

 
The proof of the above proposition is the main part in this paper and we prove it in the section 6.

\bigskip

\subsection{Proof of Theorem \ref{thm: main}}

Before we prove the Theorem \ref{thm: main}, we firstly give the estimates for the nonlinear terms:
\begin{proposition}\label{prop: N}
Under the assumption \eqref{assume: 1}, there holds that
\begin{align}
\int_0^t\|(\mathcal{N}_u,\e \mathcal{N}_v)\|_{X^2}^2ds\leq&C\e^4\int_0^t\|\om^R\|_{X^2}^2ds,\label{est: nonlinear-1 }\\
\int_0^t\|\mathcal{N}\|_{L^2}^2ds\leq&C\e^4\int_0^t\|\na_\e \om^R\|_{L^2}^2ds, \label{est: nonlinear-2}
\end{align}
where $t\in [0, T^*]$.

\end{proposition}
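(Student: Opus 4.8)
\textbf{Proof proposal for Proposition \ref{prop: N}.}

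The plan is to estimate the nonlinear terms $\mathcal{N}_u=-(u^R\pa_x u^R+v^R\pa_y u^R)$, $\mathcal{N}_v=-(u^R\pa_x v^R+v^R\pa_y v^R)$ and $\mathcal{N}=-u^R\pa_x\om^R-v^R\pa_y\om^R$ by reducing everything to $\om^R=\Delta_\e\phi$ via the elliptic estimate \eqref{eq:uR-elliptic}, and then exploiting the bootstrap hypothesis \eqref{assume: 1}, namely $\sup_{s}\|\om^R\|_{X^2}\le \mathfrak{C}\e^3$, to produce the gain of $\e^4$. For \eqref{est: nonlinear-1 } I would work directly in $X^2$: by the product estimate Lemma \ref{lem:product-Gev} (applied in the $x$ variable, with the $L^2_y$ norm handled by H\"older together with the fact that $X^2\hookrightarrow L^\infty_x$), one bounds $\|u^R\pa_x u^R\|_{X^2}$ by a product of an $X^2$-type factor and an $X^{5/2}$-type factor. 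To avoid losing a derivative one uses the divergence-free structure, writing $u^R\pa_x u^R+v^R\pa_y u^R=\pa_x(u^Ru^R)+\pa_y(v^Ru^R)$ (so no derivative actually falls on the quadratic factor in a harmful way), but in $X^2$ the cleanest route is: $\|u^R\pa_x u^R\|_{X^2}\le C\|u^R\|_{X^{5/2}}\|\pa_x u^R\|_{X^2}\le C\|u^R\|_{X^{5/2}}\|u^R\|_{X^3}$ and similarly for the $v^R\pa_y u^R$ term, then invoke \eqref{eq:uR-elliptic} to replace each factor by $\|\om^R\|_{X^{r}}$ for the appropriate $r$. The subtlety is that $\|\om^R\|_{X^2}$ is the only quantity controlled by \eqref{assume: 1}, so one must be careful that at most one factor carries a higher index; here one should instead keep the \emph{weighted} elliptic bound $\|\na_\e\phi\|_{X^2}\lesssim\|\varphi\om^R\|_{X^2}$ and the bound $\|(u^R,\e v^R)\|_{X^2}\lesssim\|\om^R\|_{X^2}$, and use the bootstrap to say $\|\om^R\|_{X^2}\le\mathfrak{C}\e^3$ for one of the two factors while the other factor is integrated in time; squaring gives one power of $\e^6$, of which $\e^4$ is kept and the remaining $\e^2$ absorbs the constant, leaving $\int_0^t\|(\mathcal{N}_u,\e\mathcal{N}_v)\|_{X^2}^2\,ds\le C\e^4\int_0^t\|\om^R\|_{X^2}^2\,ds$ as claimed. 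The factors of $\e$ in front of $v^R$ and $\mathcal{N}_v$ match up correctly because $\mathcal{N}_v$ always comes paired with $\e$ and $v^R=-\pa_x\phi$ always appears with a compensating $\e$ in \eqref{eq:uR-elliptic}.

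For \eqref{est: nonlinear-2} the situation is different because $\mathcal{N}=-u^R\pa_x\om^R-v^R\pa_y\om^R$ contains a first derivative of $\om^R$, and the target on the right is $\int_0^t\|\na_\e\om^R\|_{L^2}^2\,ds$, i.e. an $L^2$ (not Gevrey) estimate. Here I would simply use H\"older and Sobolev embedding on the bounded domain $\cS$: $\|u^R\pa_x\om^R\|_{L^2}\le\|u^R\|_{L^\infty}\|\pa_x\om^R\|_{L^2}$, and then $\|u^R\|_{L^\infty}\le C\|u^R\|_{X^2}\le C\|\om^R\|_{X^2}\le C\mathfrak{C}\e^3$ by the bootstrap and \eqref{eq:uR-elliptic}; likewise $\|v^R\pa_y\om^R\|_{L^2}\le\|v^R\|_{L^\infty}\|\pa_y\om^R\|_{L^2}$ with $\|v^R\|_{L^\infty}\le C\e^{-1}\|\e v^R\|_{X^2}\le C\e^{-1}\|\om^R\|_{X^2}\le C\mathfrak{C}\e^2$. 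Combining, $\|\mathcal{N}\|_{L^2}\le C\e^2\|\na_\e\om^R\|_{L^2}$, and squaring and integrating in time gives \eqref{est: nonlinear-2}. One has to be slightly careful that $\pa_x\om^R=\e^{-1}(\e\pa_x\om^R)$ so the bound for the first term is really $\|u^R\|_{L^\infty}\e^{-1}\|\na_\e\om^R\|_{L^2}\le C\mathfrak{C}\e^2\|\na_\e\om^R\|_{L^2}$, still giving the claimed $\e^4$ after squaring.

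The main obstacle I expect is bookkeeping of the $\e$ powers and of which Gevrey index each factor carries: one must ensure that whenever a factor is estimated at index $5/2$ or $3$ (above the index $2$ controlled by the bootstrap), that factor is \emph{not} the one to which the smallness hypothesis \eqref{assume: 1} is applied, or else the argument does not close. The clean way around this is to note that the bootstrap already gives uniform control $\sup_s\|\om^R\|_{X^2}\le\mathfrak{C}\e^3$ and combine it with the \emph{a priori} Gevrey regularity of the solution at the higher indices (which is finite on $[0,T]$ by the well-posedness construction, with a bound that need not be small), so that the product of ``one small factor times one merely bounded factor'' still produces the required power of $\e$. The rest is a routine application of Lemma \ref{lem:product-Gev} and the elliptic bound \eqref{eq:uR-elliptic}, with the boundary vanishing $(u^R,v^R)|_{y=0,1}=0$ used only to justify the integrations by parts implicit in rewriting the convective terms.
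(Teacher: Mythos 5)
Your overall strategy — reduce everything to $\om^R$ via \eqref{eq:uR-elliptic} and then invoke the bootstrap $\sup_s\|\om^R\|_{X^2}\le\mathfrak{C}\e^3$ to manufacture the factor $\e^4$ — is the right one, and your treatment of the $\e$-bookkeeping for the $v^R$ and $\e\mathcal{N}_v$ terms is sound. But the central step of your proof of \eqref{est: nonlinear-1 } does not close. Your ``cleanest route'' estimates $\|u^R\pa_x u^R\|_{X^2}\le C\|u^R\|_{X^{5/2}}\|u^R\|_{X^3}$, and you yourself notice the difficulty that \eqref{assume: 1} only controls $\|\om^R\|_{X^2}$. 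Your proposed repair — apply the smallness to one factor at index $2$ and use a merely bounded (not small) a priori estimate at index $5/2$ or $3$ for the other — does not produce the required power of $\e$: it yields $\|\mathcal{N}_u\|_{X^2}^2\le CM^2\|\om^R\|_{X^2}^2$ with $M$ an $\e$-independent bound, i.e.\ no gain of $\e^4$, so the bootstrap cannot be closed. The missing idea is that one must never leave the index $r=2$ in the $x$-variable. The paper's proof avoids any higher $x$-index by using the anisotropic product bound $\|fg\|_{X^2}\le C\|f_\Phi\|_{L^\infty_y(H^2_x)}\|g\|_{X^2}$ together with the one-dimensional Gagliardo--Nirenberg inequality \eqref{equality: GN} in $y$ only, giving $\|f_\Phi\|_{L^\infty_y(H^2_x)}^2\le C\|f\|_{X^2}(\|f\|_{X^2}+\|\pa_y f\|_{X^2})$; the crucial point is then that the elliptic inequality \eqref{eq:uR-elliptic} controls $\pa_y u^R$ (and $\e\pa_x u^R$, $\e v^R$, etc.)\ by $\|\om^R\|_{X^2}$ at the \emph{same} tangential index, so every factor is ultimately bounded by $\|\om^R\|_{X^2}$ and the four-factor product $\e^{-2}\|\om^R\|_{X^2}^4\le C\e^4\|\om^R\|_{X^2}^2$ follows from \eqref{assume: 1}. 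This is the one substantive idea your argument lacks.

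For \eqref{est: nonlinear-2}, the conclusion of your computation is correct, but the step $\|u^R\|_{L^\infty}\le C\|u^R\|_{X^2}$ (and likewise for $v^R$) is not valid as written: $X^2=H^{2,0}$ has no $y$-regularity and does not embed into $L^\infty_{x,y}$. Here again one needs either Gagliardo--Nirenberg in $y$, $\|u^R\|_{L^\infty_y(H^1_x)}^2\le C\|u^R\|_{H^{1,0}}(\|u^R\|_{H^{1,0}}+\|\pa_y u^R\|_{H^{1,0}})$, combined with the elliptic bound on $\pa_y u^R$, or the identity $v^R=-\int_0^y\pa_x u^R\,dy'$ to pass from $L^\infty_y$ to $L^2_y$ in the worst term; this is how the paper arrives at $\|\mathcal{N}\|_{L^2}\le C\e^{-1}\|\om^R\|_{X^2}\|\na_\e\om^R\|_{L^2}$. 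Since the final chain $\|u^R\|_{L^\infty}\le C\|\om^R\|_{X^2}$ happens to be true, your numerical answer is right, but the intermediate justification needs this $L^\infty_y$ step made explicit.
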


\begin{proof}
By the definition of $\mathcal{N}_u,$ we have
\begin{align*}
\int_0^t\|\mathcal{N}_u\|_{X^2}^2ds\leq \int_0^t\|u^R\pa_x u^R\|_{X^2}^2ds+\int_0^t\|v^R\pa_y u^R\|_{X^2}^2ds=I_1+I_2.
\end{align*}
It follows from Lemma \ref{lem:product-Gev} and \eqref{eq:uR-elliptic} that
\begin{align*}
I_1\leq&C\int_0^t \|\f{u^R_\Phi}{\e}\|_{L^\infty_y(H^2_x)}^2\|\e\pa_x u^R\|_{X^2}^2ds\\
\leq& C\e^{-2}\int_0^t \|u^R\|_{X^2}(\|u^R\|_{X^2}+\|\pa_y u^R\|_{X^2})\|\e\pa_x u^R\|_{X^2}^2ds\\
\leq& C\e^{-2}\int_0^t \|\om^R\|_{X^2}^4ds,
\end{align*}
where we use Gagliardo-Nirenberg inequality \eqref{equality: GN} in the second step.

Similarly, we use Lemma \ref{lem:product-Gev} and \eqref{eq:uR-elliptic} to deduce
\begin{align*}
I_2\leq&C\int_0^t\|v^R_\Phi\|_{L^\infty_y(H^2_x)}^2\|\pa_y u^R\|_{X^2}^2ds 
\leq C\e^{-2}\int_0^t\|\e\pa_xu^R\|_{X^2}^2\|\pa_y u^R\|_{X^2}^2ds\\
\leq&C\e^{-2}\int_0^t\|\om^R\|_{X^2}^4ds,
\end{align*}
where we use $v^R=-\int_0^y \pa_x u^R dy'$ in the second step.

Collecting $I_1$ and $I_2$ together and using \eqref{assume: 1}, it holds that
\begin{align*}
\int_0^t\|\mathcal{N}_u\|_{X^2}^2ds\leq C\e^{-2}\int_0^t\|\om^R\|_{X^2}^4ds\leq C\e^4\int_0^t\|\om^R\|_{X^2}^2ds.
\end{align*}

The estimate for $\e \mathcal{N}_v$ is obtained by changing $u^R$ into $\e v^R$ in the above argument and we omit details. Thus we obtain \eqref{est: nonlinear-1 }.

\medskip

For \eqref{est: nonlinear-2}, we use the definition of $\mathcal{N}$ to have
\begin{align*}
\int_0^t\|\mathcal{N}\|_{L^2}^2ds\leq& \int_0^t\|u^R\pa_x \om^R\|_{L^2}^2ds+\int_0^t\| v^R\pa_y\om^R\|_{L^2}^2ds\\
\leq&\e^{-2}\int_0^t\|u^R\|_{L^\infty_y(H^1_x)}^2\|\e\pa_x \om^R\|_{L^2}^2ds+\int_0^t\| v^R\|_{L^\infty_y(H^1_x)}^2\|\pa_y\om^R\|_{L^2}^2ds\\
\leq&C\e^{-2}\int_0^t\|\om^R\|_{H^{1,0}}^2\|\e\pa_x \om^R\|_{L^2}^2ds+C\int_0^t\| \om^R\|_{H^{2,0}}^2\|\pa_y\om^R\|_{L^2}^2ds\\
\leq&C\e^{-2}\sup_{s\in[0,t]}\|\om^R\|_{X^2}^2\int_0^t\|\na_\e \om^R\|_{L^2}^2ds\\
\leq&C\e^4\int_0^t\|\na_\e \om^R\|_{L^2}^2ds,
\end{align*}
by \eqref{assume: 1} and we obtain \eqref{est: nonlinear-2}.

\end{proof}

\medskip

With Proposition \ref{pro: key} and Proposition \ref{prop: N} in hand, we are in the position to prove the Theorem \ref{thm: main}. By Proposition \ref{pro: (na_e phi, tri_e phi)}, Proposition \ref{pro: key} and Proposition \ref{prop: N}, we get 
\beno
\sup_{s\in[0,t]}(\la\|\na_\e\phi(s)\|_{X^{\f73}}^2+\|\Delta_\e\phi(s)\|_{X^2}^2) +\int_0^t \|\pa_t \na_{\e}\phi_\Phi\|^2_{H^{2,0}}\leq Ct \e^6+C\int_0^t \|\Delta_\e\phi(s)\|_{X^2}^2ds,
\eeno
for $t\in[0,T].$ By Gronwall inequality and choosing a small $T<\min\{T_p, \f{1}{2\lambda}\}$, we get that 
\beno
\sup_{s\in[0,t]}(\la\|\na_\e\phi(s)\|_{X^{\f73}}^2+\|\om^R\|_{X^2}^2) +\int_0^t \|\pa_t \na_{\e}\phi_\Phi\|^2_{H^{2,0}}\leq \f{\mathfrak{C}}{2} \e^6.
\eeno

By Sobolev embedding theorem and Lemma \ref{lem: u^p}, we get the Theorem \ref{thm: main} proved.

\bigskip

\section{The proof of Proposition \ref{pro: key}}
All we left is the Proposition \ref{pro: key}. To prove that,  we firstly give the following decomposition of $\phi$:
\begin{align}\label{decom: phi}
\phi=\phi_{slip}+\phi_{bc},
\end{align}
where $\phi_{slip}$ satisfies that
 \begin{align}\label{eq: tri_e phi-good}
\left\{
\begin{aligned}
&(\pa_t -\Delta_\e)\Delta_\e\phi_{slip}+u^p\pa_x \Delta_\e\phi_{slip}+v^p\pa_y \Delta_\e\phi_{slip}+\pa_y\phi_{slip}\pa_x \om^p-\pa_x\phi_{slip}\pa_y \om^p\\
&\qquad\qquad\qquad\qquad\qquad=\pa_y \mathcal{N}_u-\e^2\pa_x \mathcal{N}_v +\e^2 f_1+f_2-C(t)\pa_x\om^p,\\
&\phi_{slip}|_{y=0,1}=0,\quad \Delta_\e\phi_{slip}|_{y=0,1}=0,\\
&\phi_{slip}|_{t=0}=0,
\end{aligned}
\right.
\end{align}
and $\phi_{bc}$ satisfies that
\begin{align}
\left\{
\begin{aligned}
&(\pa_t-\Delta_\e)\Delta_\e\phi_{bc}+u^p\pa_x \Delta_\e\phi_{bc}+v^p\pa_y \Delta_\e\phi_{bc}+\pa_y\phi_{bc}\pa_x \om^p-\pa_x\phi_{bc}\pa_y \om^p=0,\\
&\phi_{bc}|_{y=0,1}=0,\quad \pa_y\phi_{bc}|_{y=0,1}=- \pa_y\phi_{slip}|_{y=0,1}+C(t)\\
&\phi_{bc}|_{t=0}=0.
\end{aligned}
\right.
\end{align}

\medskip

To prove Proposition \ref{pro: key}, we need the estimates of $\phi_{slip}$ and $\phi_{bc}$. First, we notice that $\phi_{slip}$ has a good boundary condition. We use "hydrostatic trick" method to get its estimates. The proof of following proposition is given in section 7. 
\begin{proposition}\label{pro: om-good}
There exists $\la_0>1$and $0<T<\min\{T_p, \f{1}{2\lambda}\}$ such that for all $t\in[0,T]$, $\la\geq \la_0$, there holds that 
\begin{align*}
\|\Delta_\e\phi_{slip}\|_{X^{2}}^2+&\la\int_0^t  (\|\Delta_\e\phi_{slip}\|_{X^{\f73}}^2+\|\na_\e\phi_{slip}\|_{X^\f73}^2+|\na_\e\phi_{slip}|_{y=0,1}|_{X^\f73}^2)ds+\int_0^t\|\na_\e\Delta_\e\phi_{slip}\|_{X^{2}}^2ds\\
\leq&C\int_0^t\|(\mathcal{N}_u,\e \mathcal{N}_v)\|_{X^2}^2ds+\f{C}{\la}\int_0^t\|(C(t),\e^2 f_1,f_2)\|_{X^{\f53}}^2 ds.
\end{align*}

\end{proposition}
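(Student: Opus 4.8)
\textbf{Proof proposal for Proposition \ref{pro: om-good}.}

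The plan is to run an energy estimate on the system \eqref{eq: tri_e phi-good} for $\phi_{slip}$, exploiting the crucial fact that $\phi_{slip}$ satisfies the \emph{slip} (Navier-type) boundary condition $\Delta_\e\phi_{slip}|_{y=0,1}=0$ in addition to $\phi_{slip}|_{y=0,1}=0$, so that all the boundary terms that plagued the analysis of the full system \eqref{eq: phi} now vanish. Write $\om_{slip}=\Delta_\e\phi_{slip}$. First I would apply $e^{\Phi(t,D_x)}$ to the vorticity equation in \eqref{eq: tri_e phi-good} and, following the ``hydrostatic trick'', test in $X^{7/3}$ (i.e. take the $H^{7/3,0}$ inner product after the Gevrey weight) against $\om_{slip,\Phi}/\pa_y\om^p$ rather than against $\om_{slip,\Phi}$. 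The convexity hypothesis from Lemma \ref{lem: u^p}, namely $\pa_y\om^p\geq c_0>0$, makes $1/\pa_y\om^p$ a legitimate Gevrey multiplier (bounded with bounded derivatives on $[0,T_p]$), and the transport/stretching pair produces the cancellation
\begin{align*}
-\int_{\cS}\pa_x\phi_{slip}\,\pa_y\om^p\cdot\frac{\om_{slip}}{\pa_y\om^p}\,dxdy
=\int_{\cS}\pa_x|\na_\e\phi_{slip}|^2\,dxdy=0,
\end{align*}
using $\phi_{slip}|_{y=0,1}=0$; the $\pa_y\phi_{slip}\pa_x\om^p$ term and the $u^p\pa_x$, $v^p\pa_y$ transport terms are handled by the commutator estimates of Lemma \ref{lem:com-Gev} together with Lemma \ref{lem:product-Gev}, producing terms absorbable into the dissipation or the left-hand side. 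The diffusion term $-\Delta_\e\om_{slip}$ integrated against $\om_{slip}/\pa_y\om^p$ yields, after integrating by parts and using $\om_{slip}|_{y=0,1}=0$, the positive term $\|\na_\e\om_{slip}\|_{X^2}^2$ (up to lower-order commutator corrections from differentiating $1/\pa_y\om^p$), plus — because of the $\pa_t\tau(t)=-\la$ coming from the moving Gevrey radius — the gain $\la\int_0^t\|\om_{slip}\|_{X^{7/3}}^2\,ds$. This is exactly the mechanism that lets the factor $\la$ appear on the left of the claimed inequality.

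The next ingredient is the boundary gain $|\na_\e\phi_{slip}|_{y=0,1}|_{X^{7/3}}^2$. Here I would use the elliptic structure: since $\Delta_\e\phi_{slip}=\om_{slip}$ with $\phi_{slip}|_{y=0,1}=0$, a trace/elliptic estimate bounds $|\pa_y\phi_{slip}|_{y=0,1}|_{X^{7/3}}$ by $\|\na_\e\om_{slip}\|_{X^2}$ and $\|\om_{slip}\|_{X^{7/3}}$-type norms (and $|\e\pa_x\phi_{slip}|_{y=0,1}|=0$ trivially), so this term is controlled once we have the interior bounds; alternatively one reads it off directly from the fundamental theorem of calculus applied to $\pa_y\phi_{slip}$ together with the Gagliardo--Nirenberg inequality \eqref{equality: GN}. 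Similarly $\|\na_\e\phi_{slip}\|_{X^{7/3}}\le C\|\varphi\om_{slip}\|_{X^{7/3}}\le C\|\om_{slip}\|_{X^{7/3}}$ by \eqref{est: na_e phi} and its higher-regularity analogue. The forcing terms on the right of \eqref{eq: tri_e phi-good}, namely $\pa_y\mathcal N_u-\e^2\pa_x\mathcal N_v$ and $\e^2 f_1+f_2-C(t)\pa_x\om^p$, are paired against $\om_{slip,\Phi}/\pa_y\om^p$: the $\mathcal N$-part, after moving one $\pa_y$ (resp.\ $\e\pa_x$) onto the test function, contributes $\le C\|(\mathcal N_u,\e\mathcal N_v)\|_{X^2}\|\na_\e\om_{slip}\|_{X^2}$, absorbed via Young; the $(\e^2f_1,f_2,C(t)\pa_x\om^p)$-part is estimated by $\frac{C}{\la}\|(C(t),\e^2f_1,f_2)\|_{X^{5/3}}^2+\la\|\om_{slip}\|_{X^{7/3}}^2$, where the small constant $1/\la$ (and the matching $\la$ absorbed on the left, for $\la$ large) comes from the standard trick of pairing a low-regularity source against a high-regularity quantity and splitting the frequency-weight as $\k^{2}=\k^{5/3}\cdot\k^{7/3}/\k^{... }$ — more precisely interpolating $\|\cdot\|_{X^2}^2\le\|\cdot\|_{X^{5/3}}\|\cdot\|_{X^{7/3}}$ is \emph{not} quite what is needed; instead one uses that $\pa_t\phi$ or $\om_{slip}$ carries the full $X^{7/3}$ norm with coefficient $\la$ while the source only needs $X^{5/3}$, so Cauchy--Schwarz gives the $1/\la$.

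Finally I would integrate in time from $0$ to $t$, using $\phi_{slip}|_{t=0}=0$ to kill the initial contribution, collect the dissipation terms $\int_0^t\|\na_\e\om_{slip}\|_{X^2}^2$ and the $\la$-gains $\la\int_0^t(\|\om_{slip}\|_{X^{7/3}}^2+\|\na_\e\phi_{slip}\|_{X^{7/3}}^2+|\na_\e\phi_{slip}|_{y=0,1}|_{X^{7/3}}^2)$ on the left, choosing $\e$ small and $\la\ge\la_0$ large so that every commutator remainder and every source contribution of the form $\la\|\om_{slip}\|_{X^{7/3}}^2$ with a small prefactor is absorbed, and restricting to $T<\min\{T_p,\frac1{2\la}\}$ so that $\tau(t)=1-\la t>\frac12>0$ throughout. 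The output is precisely the stated inequality. \textbf{The main obstacle} I anticipate is bookkeeping the ``hydrostatic trick'' in $X^{7/3}$ rather than in $L^2$: commuting the Gevrey weight $e^{\Phi(t,D_x)}$ and the Sobolev weight $\langle D_x\rangle^{7/3}$ past the variable coefficient $1/\pa_y\om^p$ and past the transport fields $u^p,v^p$ generates commutators that must land in the dissipation space, and one has to verify that the regularity budget ($X^{10}$ for $u^p$, $X^8$ for $\pa_y^3u^p$ from Lemma \ref{lem: u^p}) is comfortably enough — which is why the half-integer index $7/3$, one-third below the ``diffusion level'' $2+$ and two-thirds below a would-be $3$, is the sharp choice and also where the Gevrey-$\tfrac32$ exponent $\tfrac23$ enters through $\Phi(t,k)=\tau(t)\k^{2/3}$.
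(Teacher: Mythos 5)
Your strategy — hydrostatic trick by pairing the vorticity equation against $\om_{slip,\Phi}/\pa_y\om^p$, exploiting the slip boundary conditions to kill all boundary terms, harvesting the $\la$-gain from $\pa_t\Phi=-\la\langle k\rangle^{2/3}$, and then retrieving the $\na_\e\phi_{slip}$ and boundary-trace pieces from elliptic/trace estimates — is exactly the paper's strategy, and the treatment of the commutators, the $\cN$-forcing (move one $\na_\e$ onto the test function) and the low-regularity source (Cauchy--Schwarz giving the $1/\la$ prefactor) all match.

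There is, however, a numerological error that makes the plan as written not produce the stated inequality: you propose to test in $H^{7/3,0}$ (``take the $H^{7/3,0}$ inner product after the Gevrey weight''). That would give an energy $\|\om_{slip}\|_{X^{7/3}}^2$, a $\la$-gain of $\la\|\om_{slip}\|_{X^{8/3}}^2$, and dissipation $\|\na_\e\om_{slip}\|_{X^{7/3}}^2$ — all one third of a derivative higher than what is claimed, and inconsistent with the $\la\|\om_{slip}\|_{X^{7/3}}^2$ gain you yourself write down at the end. The correct test space is $H^{2,0}$: the paper applies $\langle D\rangle^2$ to the $\Phi$-weighted equation and pairs against $\langle D\rangle^2\om_\Phi/\pa_y\om^p$ in $L^2$, so the energy is $\|\om_{slip}\|_{X^2}^2$, the $\la$-gain is $\la\|\om_{slip}\|_{X^{7/3}}^2$, and the dissipation is $\|\na_\e\om_{slip}\|_{X^2}^2$, exactly as in the statement. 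With $X^{7/3}$-testing the source pairing would also need $\|(\e^2 f_1, f_2, C(t))\|_{X^{2}}$ rather than $X^{5/3}$, which would propagate into Proposition \ref{pro: key}. Replace your test index $7/3$ by $2$ throughout and the proposal lines up with the paper's proof.
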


\medskip

The estimates of $\phi_{bc}$ is much more difficult. Here, we state the main results on it:
\begin{proposition}\label{pro: phi_{bc}}
There exists $\la_0>1$and $0<T<\min\{T_p, \f{1}{2\lambda}\}$ such that for all $t\in[0,T]$, $\la\geq \la_0$, there holds that \begin{align}\label{est: na_e phi_app}
\int_0^t \|\na_\e\phi_{bc} \|_{X^\f73}^2+\|\varphi \Delta_\e \phi_{bc}\|^2_{X^2}ds\leq& \f{C}{\la^\f12}\int_0^t \Big(|\na_\e\phi_{slip}|_{y=0,1}|^2_{X^\f73}+|C(s)|^2\Big)ds,
\end{align}
where $C$ is a universal constant.
\end{proposition}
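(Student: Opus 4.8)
\textbf{Proof plan for Proposition \ref{pro: phi_{bc}}.}
The plan is to realize the decomposition of $\phi_{bc}$ described in the introduction, namely $\phi_{bc}=\phi_{bc,S}+\phi_{bc,T}+\phi_{bc,R}$ (equivalently $\phi_{bc,S}+\phi_{bc,R}$ once the transport corrector is absorbed), and to run a fixed-point/invertibility argument on the unknown boundary data $(h^0,h^1)$. First I would observe that the boundary condition to be recovered is $\pa_y\phi_{bc}|_{y=0,1}=-\pa_y\phi_{slip}|_{y=0,1}+C(t)$, so by Lemma \ref{lem: u^p} and the elliptic bound \eqref{eq:uR-elliptic} the right-hand side is controlled in $X^{\f73}$ by $|\na_\e\phi_{slip}|_{y=0,1}|_{X^{\f73}}+|C(s)|$. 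The core is then to show that the map $(h^0,h^1)\mapsto (1+R_{bc})(h^0,h^1)$ of \eqref{def: R_bc}, which sends the prescribed Neumann data to the actual trace $\pa_y(\phi_{bc,S}+\phi_{bc,R})|_{y=0,1}$, is invertible on $L^2_t X^{\f73}$ for $\la$ large. For this I would invoke the estimates \eqref{est: 3-intro} for the Stokes pieces $\phi^i_{bc,S}$ (solving \eqref{eq: vorticity-bc11} with the explicit Fourier-in-$(t,x)$ formula) and \eqref{est: 4-intro} for the remainders $\phi^i_{bc,R}$ (solving \eqref{eq: vorticity-bc12} via the same hydrostatic-trick scheme as Proposition \ref{pro: om-good}); the key gain is the extra regularity of $\pa_x\phi^i_{bc,S}$ in $X^{\f53}$, which is precisely what forces the $\la^{-1/2}$ smallness and hence $R_{bc}$ to be a small perturbation.

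The steps, in order, would be: (i) set up $\phi_{bc,S}^i$, derive the explicit solution formula via the Fourier transform in $t$ and $x$ (extending to $y\in(0,\infty)$ or $(-\infty,1)$), and prove \eqref{est: 3-intro}, being careful to extract the sharp $\|\pa_x\phi^i_{bc,S}\|_{X^{\f53}}$ bound; (ii) introduce the transport corrector $\phi_{bc,T}$ to handle the stretching terms $u^p\pa_x\Delta_\e\phi+v^p\pa_y\Delta_\e\phi$ and $\pa_y\phi\,\pa_x\om^p$ that the Stokes solve does not see, keeping a good boundary condition for it; (iii) solve \eqref{eq: vorticity-bc12} for $\phi_{bc,R}^i$ under Navier-slip conditions, using the convexity $\pa_y\om^p\geq c_0$ and the hydrostatic trick exactly as in Proposition \ref{pro: om-good}, with the source $\pa_x\phi_{bc,S}^i\pa_y\om^p$ estimated through \eqref{est: 3-intro} to get \eqref{est: 4-intro}; (iv) assemble $\phi_{bc}=\sum_i(\phi^i_{bc,S}+\phi^i_{bc,R})$, identify $R_{bc}$, show $\int_0^t|R_{bc}[h^0,h^1]|_{X^{\f73}}^2\,ds\le C\la^{-1/2}\int_0^t|(h^0,h^1)|_{X^{\f73}}^2\,ds$, invert $(1+R_{bc})$ by Neumann series for $\la\ge\la_0$, and thereby solve for $(h^0,h^1)$ with $\int_0^t|(h^0,h^1)|_{X^{\f73}}^2\,ds\le C\int_0^t(|\na_\e\phi_{slip}|_{y=0,1}|_{X^{\f73}}^2+|C(s)|^2)\,ds$; (v) feed this back into \eqref{est: 3-intro} and \eqref{est: 4-intro} and sum to obtain the claimed bound on $\int_0^t\|\na_\e\phi_{bc}\|_{X^{\f73}}^2+\|\varphi\Delta_\e\phi_{bc}\|_{X^2}^2\,ds$, using the elliptic estimate \eqref{est: na_e phi} to pass from $\varphi\Delta_\e\phi_{bc}$ to $\na_\e\phi_{bc}$ where needed.

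The main obstacle I expect is step (i)–(iv) taken together: extracting from the explicit Orr--Sommerfeld-type resolvent (the Airy/Stokes kernel in the $(t,k)$-Fourier variables, with the $\la\langle k\rangle^{\f23}$ Gevrey weight built in) a bound on $\pa_x\phi^i_{bc,S}$ that is one full tangential derivative \emph{better} than what naive trace estimates give, and which is uniform in $\e$. This is the quantitative heart of why the Gevrey index drops from $\f98$ to $\f32$: the decay in $\la$ must beat the derivative loss coming from $\pa_y\om^p$ in the remainder equation \eqref{eq: vorticity-bc12}, and getting the exponent $\f53$ (rather than $\f73$) on $\pa_x\phi^i_{bc,S}$ in \eqref{est: 3-intro} is exactly what makes $R_{bc}$ a genuine $O(\la^{-1/2})$ contraction. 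A secondary technical point is the bookkeeping for $\phi_{bc,T}$: one must verify that the transport corrector neither spoils the boundary conditions used to define $R_{bc}$ nor introduces uncontrolled commutators in Gevrey norm, which is where Lemmas \ref{lem:product-Gev} and \ref{lem:com-Gev} and the subadditivity \eqref{eq:subadditive} of $\Phi$ come in. Everything else — the product and commutator estimates, the Poincaré/Hardy and Gagliardo--Nirenberg inequalities, choosing $T<\min\{T_p,\f1{2\la}\}$ so that $\tau(t)>0$ — is routine once these sharp boundary-layer bounds are in place.
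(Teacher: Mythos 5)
Your proposal follows the paper's proof essentially step for step: the same decomposition $\phi_{bc}=\phi_{bc,S}+\phi_{bc,T}+\phi_{bc,R}$, the same explicit Fourier-in-$(t,x)$ solve for the Stokes pieces $\phi^i_{bc,S}$, the same hydrostatic-trick estimate for $\phi^i_{bc,R}$, and the same Neumann-series inversion of $1+R_{bc}$ with the sharp $X^{\f53}$ bound on $\pa_x\phi^i_{bc,S}$ correctly identified as the quantitative linchpin that produces the $O(\la^{-1/2})$ contraction. The only imprecision is the parenthetical suggesting $\phi_{bc,T}$ can be ``absorbed'': in the paper it remains a genuine summand whose traces enter the definition of $R_{bc}$ and whose boundary values at $y=1-i$ feed into the boundary data for $\phi^i_{bc,R}$, but since you flag it as a point requiring care, this is a wording slip rather than a gap.
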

The proof of Proposition \ref{pro: phi_{bc}} is given in section 8. 

\medskip

Based on the above two propositions, we are in the position to prove Proposition \ref{pro: key}. 
Firstly, we give the estimates of $\|u^R\|_{L^2}$ which is used to control the $C(t)$.
\begin{lemma}\label{lem: ||(u^R, e v^R)||_L^2}
There exist $0<T<\min\{T_p, \f{1}{2\lambda}\}$ and $\la_0\geq1$  such that for $t\in[0,T]$ and $\la\geq \la_0,$ it holds that
\begin{align}\label{est: |(u^R, e v^R)|_L^2}
\|e^{(1-\la t)}(u^R,\e v^R)\|_{L^2}^2+&\la\int_0^t\|e^{(1-\la s)}(u^R,\e v^R)\|_{L^2}^2ds+\int_0^t\|e^{(1-\la s)}\na_\e(u^R,\e v^R)\|_{L^2}^2ds\\
\nonumber
\leq&C\int_0^t(\|e^{(1-\la s)}(\mathcal{N}_u,\e\mathcal{N}_v)\|_{L^2}^2+\e^8)ds+\f{C}{\la}\int_0^t\|\na_\e\phi\|_{X^2}^2ds.
\end{align}

\end{lemma}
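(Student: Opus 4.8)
The plan is to establish \eqref{est: |(u^R, e v^R)|_L^2} by a weighted $L^2$ energy estimate on the error system \eqref{eq: error-(u,v)-1}. Since $e^{(1-\la t)}$ is merely a (time–dependent) scalar, I would multiply the first equation of \eqref{eq: error-(u,v)-1} by $e^{2(1-\la t)}u^R$ and the second by $e^{2(1-\la t)}v^R$, integrate over $\cS$, and add. Using $\f{d}{dt}e^{2(1-\la t)}=-2\la e^{2(1-\la t)}$, the $\pa_t$ terms produce $\f12\f{d}{dt}\|e^{(1-\la t)}(u^R,\e v^R)\|_{L^2}^2+\la\|e^{(1-\la t)}(u^R,\e v^R)\|_{L^2}^2$; the viscous terms $-\Delta_\e u^R$, $-\e^2\Delta_\e v^R$ give $\|e^{(1-\la t)}\na_\e(u^R,\e v^R)\|_{L^2}^2$, the boundary contributions vanishing because $(u^R,v^R)|_{y=0,1}=0$; the pressure terms cancel after an integration by parts using $\pa_x u^R+\pa_y v^R=0$ and $v^R|_{y=0,1}=0$; and the advective terms $u^p\pa_x+v^p\pa_y$ acting on $u^R$ and on $v^R$ drop out by skew-symmetry modulo $\pa_x u^p+\pa_y v^p=0$, the boundary terms vanishing since $v^p|_{y=0,1}=0$. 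Of the remaining ``coefficient'' contributions $\int e^{2(1-\la t)}u^R(u^R\pa_x u^p+v^R\pa_y u^p)$ and $\e^2\int e^{2(1-\la t)}v^R(u^R\pa_x v^p+v^R\pa_y v^p)$, all but one are harmless: by the $L^\infty$ bounds on $\na u^p$ and $\na v^p$ supplied by Lemma \ref{lem: u^p} (together with Sobolev embedding in $y$), each is $\le C\|e^{(1-\la t)}(u^R,\e v^R)\|_{L^2}^2$ — the $\e^2v^R$-weighted ones even carrying a surplus power of $\e$ — and so is absorbed into $\la\|e^{(1-\la t)}(u^R,\e v^R)\|_{L^2}^2$ once $\la\ge\la_0$.

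The one genuinely dangerous term is $\int_\cS e^{2(1-\la t)}u^R v^R\pa_y u^p\,dxdy$, which loses a tangential derivative since $v^R$ is only recovered through $v^R=-\int_0^y\pa_x u^R\,dy'$. I would bound it by $C\|e^{(1-\la t)}u^R\|_{L^2}\|e^{(1-\la t)}v^R\|_{L^2}$, using $\|\pa_y u^p\|_{L^\infty}\le C$, and then — this is the crux — estimate $\|e^{(1-\la t)}v^R\|_{L^2}$ through the stream function rather than through the $\e$-lossy bound $\e^{-1}\|e^{(1-\la t)}\e v^R\|_{L^2}$. Indeed, since $v^R=-\pa_x\phi$ with $\phi|_{y=0,1}=0$ by \eqref{def: (u^R, v^R)}--\eqref{eq: tri_e phi=om^R}, applying the one-dimensional Poincar\'e inequality to each Fourier mode $\widehat\phi(t,k,\cdot)$, and using $|k|\le\langle k\rangle$ together with $\langle k\rangle^{\f23}\ge1$ (so $e^{(1-\la t)}\le e^{\Phi(t,k)}$ for $t<\f1{2\la}$, with $\Phi$ as in \eqref{def: tau}), gives
\[
\|e^{(1-\la t)}v^R\|_{L^2}\le\|v^R\|_{X^0}=\big\||k|e^{\Phi}\widehat\phi\big\|_{\ell^2_kL^2_y}\le C\big\|\langle k\rangle^2e^{\Phi}\pa_y\widehat\phi\big\|_{\ell^2_kL^2_y}=C\|\pa_y\phi\|_{X^2}\le C\|\na_\e\phi\|_{X^2}.
\]
Young's inequality then yields $\big|\int e^{2(1-\la t)}u^R v^R\pa_y u^p\big|\le\f{\la}{2}\|e^{(1-\la t)}u^R\|_{L^2}^2+\f{C}{\la}\|\na_\e\phi\|_{X^2}^2$, the first term again being absorbed after integration in time.

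For the forcing, I would estimate $\int e^{2(1-\la t)}(u^R\mathcal{N}_u+\e^2v^R\mathcal{N}_v)\le\delta\|e^{(1-\la t)}(u^R,\e v^R)\|_{L^2}^2+C_\delta\|e^{(1-\la t)}(\mathcal{N}_u,\e\mathcal{N}_v)\|_{L^2}^2$, and for the consistency remainders appeal to Lemma \ref{lem: R}: $\int e^{2(1-\la t)}u^R R_1\le\delta\|e^{(1-\la t)}u^R\|_{L^2}^2+C\e^8$, while for $R_2$ I would first integrate by parts in $x$, $\int e^{2(1-\la t)}v^R R_2=\int e^{2(1-\la t)}\phi\,\pa_x R_2$, and then use Poincar\'e, $\|\phi\|_{L^2}\le C\|\pa_y\phi\|_{L^2}\le C\|u^R\|_{L^2}$, together with $\|\pa_x R_2\|_{L^2}\le C\e^4$, to bound it by $\delta\|e^{(1-\la t)}u^R\|_{L^2}^2+C\e^8$. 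Collecting everything, integrating in time over $[0,t]$ (there is no initial contribution since $(u^R,v^R)|_{t=0}=0$), and choosing $\la_0$ large and $\delta$ small so that all the $\delta$- and $\tfrac{\la}{2}$-type terms are absorbed by $\la\int_0^t\|e^{(1-\la s)}(u^R,\e v^R)\|_{L^2}^2\,ds$ and $\int_0^t\|e^{(1-\la s)}\na_\e(u^R,\e v^R)\|_{L^2}^2\,ds$ on the left yields \eqref{est: |(u^R, e v^R)|_L^2}. The only real obstacle in this scheme is the derivative-losing term $\int u^R v^R\pa_y u^p$; the resolution is the observation above that the homogeneous Dirichlet condition on $\phi$ allows $v^R=-\pa_x\phi$ to be controlled by $\|\pa_y\phi\|_{X^2}\lesssim\|\na_\e\phi\|_{X^2}$ with no loss in $\e$, at the modest price of the factor $\la^{-1}$.
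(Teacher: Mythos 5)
Your proof is correct and follows essentially the same route as the paper's: a weighted $L^2$ energy estimate on \eqref{eq: error-(u,v)-1} with $e^{2(1-\la t)}(u^R,v^R)$ as test functions, using $\dive\,(u^p,v^p)=\dive\,(u^R,v^R)=0$ and the boundary conditions to kill the transport, pressure, and viscous boundary terms, and resolving the derivative-losing term $\int u^Rv^R\pa_yu^p$ by trading $v^R$ for $\|\na_\e\phi\|_{X^2}$ at the price of a factor $\la^{-1}$. The paper reaches that bound via $v^R=-\int_0^y\pa_xu^R\,dy'$ and $\pa_xu^R=\pa_x\pa_y\phi$, whereas you go through $v^R=-\pa_x\phi$ with mode-wise Poincar\'e; these are equivalent, and your integration by parts $\int v^RR_2=\int\phi\,\pa_xR_2$ is a slightly more careful treatment of a step the paper leaves implicit.
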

\begin{remark}
We use weighted quantity $\|e^{(1-\la t)}(u^R,\e v^R)\|_{L^2}$ instead of $\|(u^R,\e v^R)\|_{L^2}$ to obtain small constant factor in front of $\int_0^t\|\na_\e\phi\|_{X^2}^2ds$ in \eqref{est: |(u^R, e v^R)|_L^2}.
\end{remark}

\begin{proof}
Taking $L^2$ inner product with $e^{2(1-\la t)}u^R$ in the first equation of \eqref{eq: error-(u,v)-1} and with $e^{2(1-\la t)}v^R$ in the second equation of \eqref{eq: error-(u,v)-1}, we use the fact $$\pa_t(e^{2(1-\la t)}f)=e^{2(1-\la t)}\pa_tf+2\la e^{2(1-\la t)}f$$ and integrate by parts by boundary condition $(u^R, v^R)|_{y=0,1}=0$ to yield
\begin{align*}
\f12\f{d}{dt}&\|e^{(1-\la t)}(u^R, \e v^R)\|_{L^2}^2+\la\|e^{(1-\la t)}(u^R, \e v^R)\|_{L^2}^2+\|e^{(1-\la t)}\na_\e(u^R,\e v^R)\|_{L^2}^2\\
\leq&C(\|e^{(1-\la t)}(u^R, v^R)\|_{L^2}+\|e^{(1-\la t)}(\mathcal{N}_u,\e\mathcal{N}_v)\|_{L^2}+\|(R_1, R_2)\|_{L^2})\|e^{(1-\la t)}(u^R, \e v^R)\|_{L^2}\\
\leq&\f{\la}{2}\|e^{(1-\la t)}(u^R, \e v^R)\|_{L^2}^2+C(\|e^{(1-\la t)}(\mathcal{N}_u,\e\mathcal{N}_v)\|_{L^2}^2+\e^8)+\f{C}{\la}\|e^{(1-\la t)}\pa_x u^R\|_{L^2}^2,
\end{align*}
where we write $v^R=-\int_0^y \pa_x u^Rdy'$ and use the fact $\pa_xu^p+\pa_y v^p=0 $ to eliminate transport term and $\pa_xu^R+\pa_y v^R=0 $ to eliminate pressure term respectively.

Afterwards, integrating time from $0$ to $t$ and using $\pa_x u^R=-\pa_x \pa_y\phi$, we obtain
\begin{align*}
\|e^{(1-\la t)}(u^R,\e v^R)(t)\|_{L^2}^2+&\la\int_0^t\|e^{(1-\la s)}(u^R,\e v^R)\|_{L^2}^2ds+\int_0^t\|e^{(1-\la s)}\na_\e(u^R,\e v^R)\|_{L^2}^2ds\\
\leq&C\int_0^t(\|e^{(1-\la s)}(\mathcal{N}_u,\e\mathcal{N}_v)\|_{L^2}^2+\e^8)ds+\f{C}{\la}\int_0^t\|e^{(1-\la s)}\pa_x \pa_y\phi\|_{L^2}^2ds.
\end{align*}
Finally, we use  $\|e^{(1-\la s)}\pa_x\pa_y\phi\|_{L^2}\leq C\|\na_\e\phi\|_{X^2}$ to complete the proof.

\end{proof}

\medskip

{\bf{Proof of Proposition \ref{pro: key}:}} Now, we give the proof Proposition \ref{pro: key}.  We divide this proof into two parts.

\underline{ Estimates of $\int_0^t \|\na_\e\phi\|_{X^2}^2$.} Since 
\beno
|C(t)|=|\f 1 {2\pi}\int_{\cS} u^Rdxdy|\leq C\|u^R\|_{L^2}\leq C\|e^{(1-\la t)}u^R\|_{L^2},
\eeno
by Lemma \ref{lem: ||(u^R, e v^R)||_L^2} to ensure
\begin{align}\label{est: |C(t)|-1}
|C(t)|^2\leq& C\int_0^t(\|e^{(1-\la s)}(\mathcal{N}_u,\e\mathcal{N}_v)\|_{L^2}^2+\e^8)ds+\f{C}{\la}\int_0^t\|\na_\e\phi\|_{X^2}^2ds.
\end{align}
By the definition of $f_1$ and $f_2$, we obtain that
\begin{align*}
&\int_0^t \|f_1\|_{X^\f 53}^2+\|f_2\|_{X^\f 53}^2ds\leq C\int_0^t (\|\e \tri_\e \phi\|_{X^\f 53}^2+\e^8)ds,
\end{align*}
 we get
 \begin{align}\label{est: phi_slip}
 \la\int_0^t  (\|\Delta_\e\phi_{slip}\|_{X^{\f73}}^2+&\|\na_\e\phi_{slip}\|_{X^\f73}^2+|\na_\e\phi_{slip}|_{y=0,1}|_{X^\f73}^2)ds\\
 \nonumber
 \leq&C\int_0^t\|(\mathcal{N}_u,\e \mathcal{N}_v)\|_{X^2}^2ds+\f{C}{\la}\Big(|C(t)|^2+\int_0^t (\|\e \Delta_\e \phi\|_{X^\f 53}^2+\e^8)ds\Big).
 \end{align}
Then, it follows $\phi=\phi_{slip}+\phi_{bc}$ and \eqref{est: na_e phi_app} to deduce
\begin{align}\label{est: ||na_e phi||_X^2-1}
\int_0^t \|\na_\e\phi\|_{X^2}^2ds\leq&\int_0^t \|\na_\e\phi_{slip}\|_{X^2}^2ds+\int_0^t \|\na_\e\phi_{bc}\|_{X^2}^2ds\\
\nonumber
\leq&\f{C}{\la}\int_0^t\|(\mathcal{N}_u,\e \mathcal{N}_v)\|_{X^2}^2ds+\f{C}{\la^2}\Big(\int_0^t \|\e \Delta_\e \phi\|_{X^\f 53}^2+\e^8ds\Big)\\
\nonumber
&+\f{C}{\la^\f12}\int_0^t |\na_\e\phi_{slip}|_{y=0,1}|_{X^\f73}^2 ds+\f{C}{\la^\f12}|C(t)|^2,\\
\leq&  \f{C}{\la^\f12}|C(t)|^2+\f{C}{\la}\int_0^t\|(\mathcal{N}_u,\e \mathcal{N}_v)\|_{X^2}^2ds+\f{C}{\la^2}\int_0^t (\|\e \Delta_\e \phi\|_{X^\f 53}^2+\e^8)ds .\nonumber
\end{align}
Plusing \eqref{est: |C(t)|-1} and  above estimates together and taking $\la$ large enough to get
\begin{align}\label{est: ||na_e phi||_X^2-2}
|C(t)|^2+ \int_0^t \|\na_\e\phi\|_{X^2}^2ds  \leq C\int_0^t\|(\mathcal{N}_u,\e \mathcal{N}_v)\|_{X^2}^2ds+C\int_0^t (\|\e \Delta_\e \phi\|_{X^\f 53}^2+\e^8)ds . 
\end{align}

\underline{ Estimates of $\int_0^t\|\varphi\tri_\e\phi \|_{X^2}^2$.} 

It follows from \eqref{est: phi_slip}  and  \eqref{est: ||na_e phi||_X^2-2} that
\begin{align*}
\int_0^t\|\Delta_\e\phi_{slip}\|_{X^2}^2ds\leq& \f{C}{\la}\int_0^t\|(\mathcal{N}_u,\e \mathcal{N}_v)\|_{X^2}^2ds+\f{C}{\la}\Big(|C(t)|^2+\int_0^t (\|\e \Delta_\e \phi\|_{X^\f 53}^2+\e^8)ds\Big)\\
\leq&\f{C}{\la}\int_0^t(\|(\mathcal{N}_u,\e \mathcal{N}_v)\|_{X^2}^2+\|\e \Delta_\e \phi\|_{X^\f 53}^2+\e^8)ds.
\end{align*}

Applying Proposition \ref{pro: phi_{bc}} again, we get
\beno
\int_0^t\|\varphi\Delta_\e\phi_{bc}\|_{X^2}^2ds\leq C\int_0^t(\|(\mathcal{N}_u,\e \mathcal{N}_v)\|_{X^2}^2+\|\e \Delta_\e \phi\|_{X^\f 53}^2+\e^8)ds.
\eeno

Combing above two estimates, we get
\beno
\int_0^t\|\varphi\Delta_\e\phi \|_{X^2}^2ds\leq C\int_0^t(\|(\mathcal{N}_u,\e \mathcal{N}_v)\|_{X^2}^2+\|\e \Delta_\e \phi\|_{X^\f 53}^2+\e^8)ds.
\eeno

By now, we get the desired results.

\bigskip

\section{Vorticity estimates under artificial boundary condition: Proof of Proposition \ref{pro: om-good}}

In the section, we give the proof of Proposition \ref{pro: om-good}. To simplify the notation, we drop the subscript in the system \eqref{eq: tri_e phi-good}:
\begin{align}\label{eq: tri_e phi-good-1}
\left\{
\begin{aligned}
&(\pa_t -\Delta_\e)\Delta_\e\phi+u^p\pa_x \Delta_\e\phi+v^p\pa_y \Delta_\e\phi+\pa_y\phi\pa_x \om^p-\pa_x\phi\pa_y \om^p\\
&\qquad \qquad\qquad=\pa_y \mathcal{N}_u-\e^2\pa_x \mathcal{N}_v +\e^2 f_1+f_2-C(t)\pa_x\om^p,\\
&\phi|_{y=0,1}=0,\quad \Delta_\e\phi|_{y=0,1}=0,\\
&\phi|_{t=0}=0.
\end{aligned}
\right.
\end{align}

The goal in this section is to establish uniform (in $\e$) estimate of vorticity $\om=\tri_\e\phi.$
\begin{proposition}\label{pro: om-good-1}
There exists $\la_0>0$ and $0<T<\min\{T_p, \f{1}{2\lambda}\}$ such that for all $t\in[0,T]$, $\la\geq \la_0$, the following holds that 
\begin{align*}
\|\om(t)\|_{X^{2}}^2+&\la\int_0^t  (\|\om\|_{X^{\f73}}^2+\|\na_\e\phi\|_{X^\f73}^2+|\na_\e\phi|_{y=0,1}|_{X^\f73}^2)ds+\int_0^t\|\na_\e\om\|_{X^{2}}^2ds\\
\leq&C\int_0^t\|(\mathcal{N}_u,\e \mathcal{N}_v)\|_{X^2}^2ds+\f{C}{\la}\int_0^t\|\e^2 f_1, f_2,C(t)\|_{X^{\f53}}^2 ds.
\end{align*}

\end{proposition}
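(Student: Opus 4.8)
The plan is to establish Proposition \ref{pro: om-good-1} via a weighted energy estimate on \eqref{eq: tri_e phi-good-1} using the \emph{hydrostatic trick}, i.e.\ testing the equation against $\om_\Phi/\pa_y\om^p$ rather than $\om_\Phi$ directly, so as to exploit the cancellation coming from the convexity $\pa_y\om^p\ge c_0>0$. First I would apply $e^{\Phi(t,D_x)}$ to the vorticity equation, producing the damping term $\la\langle D\rangle^{\f23}\om_\Phi$ on the left, and then work in the $X^2$-based Gevrey norm. The test function $\om_\Phi/\pa_y\om^p$ is the natural one: when paired with the troublesome term $-\pa_x\phi\,\pa_y\om^p$, the factor $\pa_y\om^p$ cancels and, after integrating by parts in $x$ and using $\phi|_{y=0,1}=0$ together with $\Delta_\e\phi=\om$, one gets $\int_{\cS}\pa_x\phi\,\Delta_\e\phi_\Phi\cdot(\ldots)$-type terms that are either zero or lower order; the Navier-slip condition $\om|_{y=0,1}=0$ guarantees that the boundary contributions arising from the dissipation $-\Delta_\e\om$ and from the weight $1/\pa_y\om^p$ vanish or are controllable. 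This is the crucial structural input, and it is exactly what the artificial boundary condition was designed to provide.

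The main steps, in order, would be: (i) set up the weighted $X^2$ energy identity, getting $\f12\f{d}{dt}\|\om\|_{X^2}^2 + \la\|\om\|_{X^{7/3}}^2 + (\text{dissipation})$ on the left, where the dissipation term, after using the hydrostatic weight and elliptic regularity \eqref{est: na_e phi}, \eqref{eq:uR-elliptic}, controls $\|\na_\e\om\|_{X^2}^2$ plus the crucial trace terms $\|\na_\e\phi\|_{X^{7/3}}^2$ and $|\na_\e\phi|_{y=0,1}|_{X^{7/3}}^2$ (these last two come from a Hardy/Poincar\'e-type inequality applied with the $\la$-gain, since the $\la\langle D\rangle^{2/3}$ damping upgrades an $X^2$ control to $\la\|\cdot\|_{X^{7/3}}^2$); (ii) estimate the transport terms $u^p\pa_x\Delta_\e\phi + v^p\pa_y\Delta_\e\phi$ using the divergence-free structure $\pa_xu^p+\pa_yv^p=0$ and the commutator Lemma \ref{lem:com-Gev}, writing them as $\pa_x(u^p\om)+\pa_y(v^p\om)$ and moving derivatives onto the test function; the loss of one tangential derivative here must be absorbed either into the $\la\|\om\|_{X^{7/3}}^2$ term (for the part with the extra $\pa_x$) or into the dissipation; (iii) handle $\pa_y\phi\,\pa_x\om^p$ (no derivative loss, bounded by $\|\na_\e\phi\|_{X^2}$ via Lemma \ref{lem: u^p}), and then the genuinely dangerous $-\pa_x\phi\,\pa_y\om^p$ term via the hydrostatic cancellation just described, leaving only a commutator remainder $[\langle D\rangle^2, 1/\pa_y\om^p]$-type term that is controlled by the smoothness of $\om^p$ from Lemma \ref{lem: u^p}; (iv) bound the right-hand side forcing $\pa_y\mathcal{N}_u-\e^2\pa_x\mathcal{N}_v$ by $\|(\mathcal{N}_u,\e\mathcal{N}_v)\|_{X^2}\|\na_\e\om\|_{X^2}$ after integrating by parts, and $\e^2f_1+f_2-C(t)\pa_x\om^p$ by Young's inequality distributing the $1/\la$ factor, using that $|\na_\e\phi|$ and $\|\om\|$ control these; (v) integrate in time from $0$ to $t$, using $\om|_{t=0}=0$, and choose $\la$ large to absorb all terms of the form $\f{C}{\la}\|\om\|_{X^{7/3}}^2$ or $\f{1}{10}\|\na_\e\om\|_{X^2}^2$ into the left-hand side.

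I expect the main obstacle to be step (ii) together with the bookkeeping of the trace norms in step (i): specifically, showing that the terms produced by the transport operators and by the commutators arising from the hydrostatic weight can \emph{all} be controlled by the $X^{7/3}$ norm with the $\la$-prefactor, without ever needing a full extra derivative that is not available. The delicate point is that the damping gains $\langle D\rangle^{1/3}$ relative to the $X^2$ level, but the derivative loss from $v^R\pa_yu^p$ (equivalently $\pa_x\phi\,\pa_y\om^p$, handled by cancellation) and from the transport term is a full $\langle D\rangle^1$; so the argument must be arranged so that after the cancellation only a $\langle D\rangle^{1/3}$ (or less) loss remains in each surviving term, which is consistent with the Gevrey-$\f32$ index $2/3$. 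A secondary technical difficulty is justifying the boundary integrations by parts rigorously, in particular that the weight $1/\pa_y\om^p$ — which is smooth and bounded below by $1/\|\pa_y\om^p\|_{L^\infty}$ and above by $1/c_0$ on $[0,T_p]$ by Lemma \ref{lem: u^p} — does not spoil the trace estimates near $y=0,1$; the elliptic bounds \eqref{est: na_e phi}, \eqref{eq:uR-elliptic} and the Gagliardo--Nirenberg inequality \eqref{equality: GN} should suffice for this. Once these are in place, Proposition \ref{pro: om-good-1} follows, and Proposition \ref{pro: om-good} is then immediate by re-attaching the subscript $slip$ and relabeling, since \eqref{eq: tri_e phi-good-1} is precisely \eqref{eq: tri_e phi-good}.
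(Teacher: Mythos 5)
Your proposal follows essentially the same route as the paper: act $e^{\Phi(t,D_x)}$ on \eqref{eq: tri_e phi-good-1}, test in $X^2$ against the hydrostatic weight $\om_\Phi/\pa_y\om^p$ so that the loss-of-derivative term $-\pa_x\phi\,\pa_y\om^p$ cancels after integrating by parts (using $\phi|_{y=0,1}=0$), absorb commutators from the weight and from $[e^\Phi,u^p\pa_x+v^p\pa_y]$ via Lemmas \ref{lem:com-S}, \ref{lem:com-Gev} and the $\la\|\om\|_{X^{7/3}}^2$ damping, kill the dissipation boundary term with $\om|_{y=0,1}=0$, bound the forcing $\pa_y\mathcal N_u-\e^2\pa_x\mathcal N_v$ by an integration by parts and $\e^2f_1+f_2-C(t)\pa_x\om^p$ by Young with a $1/\la$ factor, then take $\la$ large. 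The only slight imprecision is attributing the $\|\na_\e\phi\|_{X^{7/3}}^2$ and $|\na_\e\phi|_{y=0,1}|_{X^{7/3}}^2$ terms to the dissipation: in the paper these are recovered at the end from the $\la\|\om\|_{X^{7/3}}^2$ control via the elliptic bound $\|\na_\e\phi\|_{X^r}\le\|\om\|_{X^r}$ and a Calderon--Zygmund/Gagliardo--Nirenberg trace estimate, which is also what your parenthetical says; so this is a phrasing issue, not a gap.
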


\begin{proof}
By Lemma \ref{lem: u^p}, we have
\beno
\pa_y \om^p\geq c_0>0.
\eeno
Hence, we use the "hydrostatic trick" to get the desired results. Firstly, acting operator $e^{\Phi(t,D_x)}$ on the first equation of \eqref{eq: tri_e phi-good} to get
\begin{align}\label{eq: om_Phi-good}
&(\pa_t+\la\D^\f23 -\Delta_\e)\om_\Phi+u^p\pa_x \om_\Phi+v^p\pa_y\om_\Phi-\pa_x\phi_\Phi\pa_y \om^p\\
\nonumber
&\qquad=-(\pa_y\phi\pa_x \om^p)_\Phi-[e^{\Phi(t,D_x)},u^p\pa_x ] \om-[e^{\Phi(t,D_x)},v^p\pa_y ] \om \\
\nonumber
&\qquad\quad+[e^{\Phi(t,D_x)},\pa_y\om^p ]\pa_x \phi +\pa_y (\mathcal{N}_u)_\Phi-\e^2\pa_x (\mathcal{N}_v)_\Phi+(\e^2 f_1+f_2-C(t)\pa_x\om^p)_\Phi.
\end{align}
In view of \eqref{eq: om_Phi-good}, the terrible term comes from $\pa_x\phi_\Phi\pa_y \om^p,$ which lose one tangential derivative. In order to overcome the derivative loss,  we take $\D^2$ on the \eqref{eq: om_Phi-good} and then take $L^2$ inner product with $\f{\D^2\om_\Phi}{\pa_y\om^p}$ to obtain that
\begin{align*}
\f12\f{d}{dt}&\Big\|\f{\D^2\om_\Phi}{\sqrt{\pa_y\om^p}}\Big\|_{L^2}^2+\la\Big\|\f{\D^{\f73}\om_\Phi}{\sqrt{\pa_y\om^p}}\Big\|_{L^2}^2+\Big\|\f{\na_\e\D^2\om_\Phi}{\sqrt{\pa_y\om^p}}\Big\|_{L^2}^2\\
=&-\int_{\mathcal{S}}\D^2\om_\Phi\cdot(\e\pa_x,\pa_y)\f{1}{\pa_y\om^p}\cdot (\e\pa_x,\pa_y)\D^2\om_\Phi dxdy\\
&+\int_{\mathcal{S}}|\D^2\om_\Phi|^2\Big(\pa_x(\f{u^p}{\pa_y\om^p})+\pa_y(\f{v^p}{\pa_y\om^p})\Big)dxdy-\int_{\mathcal{S}}\big[\D^2,u^p\pa_x+v^p\pa_y\big]\om_\Phi ~\f{\D^2\om_\Phi}{\pa_y\om^p}dxdy\\
&-\int_{\mathcal{S}}\D^2(\pa_y\phi\pa_x\om^p)_\Phi~\f{\D^2\om_\Phi}{\pa_y\om^p}dxdy+\int_{\mathcal{S}}[\D^2,\pa_y\om^p]\pa_x\phi_\Phi~\f{\D^2\om_\Phi}{\pa_y\om^p}dxdy\\
&
+\int_\mathcal{S} \D^2 \pa_x\phi_\Phi\D^2\om_\Phi dxdy
-\int_\mathcal{S}\D^2\Big([e^{\Phi(t,D_x)},u^p\pa_x ] \om \Big)~ \f{\D^2\om_\Phi}{\pa_y\om^p}dxdy
\\
&-\int_\mathcal{S}\D^2\Big( [e^{\Phi(t,D_x)},v^p\pa_y ] \om  \Big)~ \f{\D^2\om_\Phi}{\pa_y\om^p}dxdy\\
&+\int_\mathcal{S}\D^2\Big( [e^{\Phi(t,D_x)},  \pa_y \om^p] \pa_x \phi   \Big)~ \f{\D^2\om_\Phi}{\pa_y\om^p}dxdy\\
&+\int_\mathcal{S}\D^2\Big(\pa_y (\mathcal{N}_u)_\Phi-\e^2\pa_x (\mathcal{N}_v)_\Phi\Big)~ \f{\D^2\om_\Phi}{\pa_y\om^p}dxdy\\
&+\int_\mathcal{S}\D^2(\e^2 f_1+f_2-C(t)\pa_x\om^p)_\Phi~ \f{\D^2\om_\Phi}{\pa_y\om^p}dxdy\\
=&T^0+\cdots T^{10}.
\end{align*}
The boundary term is zero due to artificial boundary condition $\om|_{y=0,1}=\Delta_\e\phi|_{y=0,1}=0.$ Integrating on $[0,t)$ with $t\leq T$ and using $\pa_y\om^p\geq c_0$, we obtain
\begin{align*}
\|\om(t)\|_{X^{2}}^2+2\la\int_0^t  \|\om\|_{X^{\f73}}^2ds+2\int_0^t\|\na_\e\om\|_{X^{2}}^2ds\leq C\int_0^t|T^0|+\cdots+|T^{10}|ds.
\end{align*}

\medskip

Now, we estimate $T^i, i=0,\cdots, 10$ one by one.

\underline{Estimate of $T^0$ and $T^1.$} Since $\pa_y\om^p\geq c_0>0$ and Lemma \ref{lem: u^p} imply
\begin{align*}
|(\e\pa_x,\pa_y)\f{1}{\pa_y\om^p}|\leq C,\quad |\pa_x(\f{u^p}{\pa_y\om^p})|+|\pa_y(\f{v^p}{\pa_y\om^p})|\leq C,
\end{align*}
it is easy to see 
\beno
|T^0|+|T^1|\leq& C\|\om\|_{X^2}(\|\na_\e\om\|_{X^2} +\|\om\|_{X^2}).
\eeno

\underline{Estimate of $T^2$ and $T^4.$} By using Lemma \ref{lem:com-S}, we get
\begin{align*}
&\|[\D^2,u^p\pa_x+v^p\pa_y\big]\om_\Phi\|_{L^2}\leq C(\|\om\|_{X^2}+\|\pa_y\om\|_{X^2}),\\
&\|[\D^2,\pa_y\om^p]\pa_x\phi_\Phi\|_{L^2}\leq C\|\phi\|_{X^2}\leq C\|\pa_y\phi\|_{X^2},
\end{align*}
where we used Poincar\'e inequality and $\phi|_{y=0,1}=0$ to ensure
\begin{align}\label{est: Poincare}
\|\phi\|_{X^r}\leq C\|\pa_y\phi\|_{X^r},\quad r\geq0.
\end{align}
  in the last step.
  
According to 
\begin{align}
\Delta_\e\phi=\om,\quad \phi|_{y=0,1}=0,
\end{align}
classical elliptic estimate and \eqref{est: Poincare} imply
\begin{align*}
\|\na_\e\phi\|_{L^2}^2\leq \|\om\|_{L^2}\|\phi\|_{L^2}\leq C\|\om\|_{L^2}\|\pa_y\phi\|_{L^2},
\end{align*}
which gives 
\begin{align}\label{est: na_ephi leq om}
\|\na_\e\phi\|_{X^r}\leq \|\om\|_{X^r},\quad r\geq0.
\end{align}
Therefore, it follows from $\pa_y\om^p\geq c_0>0$ to get
\begin{align*}
|T^2|+ |T^4|\leq C(\|\pa_y\om\|_{X^2}+\|\om\|_{X^2})\|\om\|_{X^2}.
\end{align*}

\underline{Estimate of $T_3.$} Using Lemma \ref{lem:product-Gev} and \eqref{est: na_ephi leq om}, it shows
\begin{align*}
|T^3|\leq&C\|\pa_y\phi\|_{X^2}\|\om\|_{X^2}\leq C\|\om\|_{X^2}^2.
\end{align*}

\underline{Estimate of $T^5.$} This term is the trouble term because it loses one tangential derivative. However, hydrostatic trick implies
\begin{align*}
T^5=&\int_\mathcal{S} \D^2 \pa_x\phi_\Phi\D^2\Delta_\e\phi_\Phi dxdy=-\int_\mathcal{S} \D^2 \pa_x\na_\e\phi_\Phi\D^2\na_\e\phi_\Phi dxdy\\
=&-\f12\int_\mathcal{S}  \pa_x|\D^2\na_\e\phi_\Phi|^2 dxdy=0,
\end{align*}
by using $\phi|_{y=0,1}=0.$

\underline{Estimate of $T^6,~T^7$ and $T^8.$} Let's estimate commutators by Lemma \ref{lem:com-Gev}. Since $\pa_y\om^p\geq c_0>0,$ we use Lemma \ref{lem:com-Gev} to ensure that
\begin{align*}
|T^6|\leq& C\|(u^p\pa_x\om)_\Phi-u^p\pa_x\om_\Phi \|_{H^{2-\f13,0}}\|\om\|_{X^\f73}\leq C\|\om\|_{X^{2+1-\f13-\f13}}\|\om\|_{X^\f73}=C\|\om\|_{X^\f73}^2,\\
|T^7|\leq& C\|\pa_y\om\|_{X^2}\|\om\|_{X^2},\\
|T^8|\leq&C\|(\pa_x\phi\pa_y\om^p)_\Phi-\pa_x\phi_\Phi\pa_y\om^p\|_{H^{2-\f13}}\|\om\|_{X^\f73}\leq C\|\pa_x\phi\|_{X^{2-\f13-\f13}}\|\om\|_{X^\f73}\leq C\|\phi\|_{X^\f73}\|\om\|_{X^\f73}\\
&\qquad\qquad\qquad\qquad\qquad\qquad\qquad\qquad\qquad\leq C\|\om\|_{X^\f73}^2.
\end{align*}
Here we use \eqref{est: Poincare} and \eqref{est: na_ephi leq om} in the last estimate.

\underline{Estimate of $T^9$ and $T^{10}.$} Integration by parts and boundary condition $\om|_{y=0,1}=0$ give that
\begin{align*}
|T^9|=&\int_\mathcal{S}\D^2(\mathcal{N}_u,\e \mathcal{N}_v)_\Phi\cdot \na_\e\Big(\f{\D^2\om_\Phi}{\pa_y\om^p}\Big)dxdy\\
\leq&C\|\mathcal{N}_u,\e \mathcal{N}_v\|_{X^2}(\|\om\|_{X^2}+\|\na_\e\om\|_{X^2}). 
\end{align*}

On the other hand, using H\"older inequality, we get
\begin{align*}
|T^{10}|\leq&C\|\e^2 f_1+f_2-C(t)\pa_x\om^p\|_{X^{\f53}}\|\om\|_{X^{\f73}}.
\end{align*}

Collecting $T^0-T^{10}$ together, we finally obtain
\begin{align*}
\int_0^t|T^0|+\cdots+|T^{10}|ds\leq& C\int_0^t\|\mathcal{N}_u,\e \mathcal{N}_v\|_{X^2}(\|\na_\e\om\|_{X^2}+\|\om\|_{X^2})\\
&+\|\om\|_{X^\f73}(\|\om\|_{X^\f73}+\|\na_\e\om\|_{X^2}+\|\e^2 f_1+f_2-C(t)\pa_x\om^p\|_{X^\f53})ds\\
\leq&\f1{10}\int_0^t\|\na_\e\om\|_{X^2}^2ds+C\int_0^t\|\mathcal{N}_u,\e \mathcal{N}_v\|_{X^2}^2ds\\
&+(C+\f{\la}{4})\int_0^t\|\om\|_{X^\f73}^2ds+\f{C}{\la}\int_0^t\|\e^2 f_1+f_2-C(t)\pa_x\om^p\|_{X^{\f53}}^2 ds.
\end{align*}
Taking $\la$ large enough , we deduce 
\begin{align}\label{est: om-good-1}
&\|\om(t)\|_{X^{2}}^2+\la\int_0^t  \|\om\|_{X^{\f73}}^2ds+\int_0^t\|\na_\e\om\|_{X^{2}}^2ds\\
\nonumber
\leq& C\int_0^t(\|\mathcal{N}_u,\e \mathcal{N}_v\|_{X^2}^2ds+\f{C}{\la}\int_0^t\|\e^2 f_1+f_2-C(t)\pa_x\om^p\|_{X^{\f53}}^2 ds.
\end{align}

On the other hand,  \eqref{est: na_ephi leq om} gives
\beno
\|\na_\e\phi\|_{X^\f73}\leq C\|\om\|_{X^\f73}.
\eeno
Calderon-Zygmund inequality and Gagliardo-Nirenberg inequality \eqref{equality: GN} imply
\begin{align*}
|\na_\e\phi|_{y=0,1}|_{X^\f73}\leq C\|\na_\e\phi\|_{X^\f73}^\f12(\|\na_\e\phi\|_{X^\f73}^\f12+\|\na_\e\pa_y\phi\|_{X^\f73}^\f12)\leq C\|\om\|_{X^\f73}.
\end{align*}
Along with \eqref{est: na_ephi leq om} and \eqref{est: om-good-1}, we get the desired result.

\end{proof}

\section{Construction of the boundary corrector: Proof of Proposition \ref{pro: phi_{bc}}}

In the previous section, we construct a solution to the Orr-Sommerfeld equation with artificial boundary conditions: we replace condition $\pa_y\phi|_{y=0,1}=0$ by $\Delta_\e\phi|_{y=0,1}=0.$ To go back to the original system, we need to correct Neumann condition. 
Thus, we define $\phi_{bc}$ satisfies the following system:
 \begin{align}\label{eq: phi-1-1}
\left\{
\begin{aligned}
&(\pa_t -\Delta_\e)\Delta_\e\phi_{bc}+u^p\pa_x \Delta_\e\phi_{bc}+v^p\pa_y \Delta_\e\phi_{bc}+\pa_y\phi_{bc}\pa_x \om^p -\pa_x\phi_{bc}\pa_y \om^p=0,\\
&\phi_{bc}|_{y=0,1}=0,\quad \pa_y\phi_{bc}|_{y=0,1}=- \pa_y\phi_{slip}|_{y=0,1}+C(t),\\
&\phi|_{t=0}=0,
\end{aligned}
\right.
\end{align}

To estimate $\phi_{bc}$, we use the  following decomposition:
\beno
\phi_{bc}=\phi_{bc, S}+\phi_{bc, T}+\phi_{bc, R},
\eeno
The definitions and estimates of $\phi_{bc, S}, \phi_{bc, T}$ and $\phi_{bc, R}$ are given in the following subsections. 
 
\subsection{The estimates of $\phi_{bc, S}$: Stokes equation } In this subsection, we  deal with $\phi_{bc, S}$. 

Because of two boundary $y=0$ and $y=1$, we define 
\beno
\phi_{bc, S} = \phi^0_{bc, S}+\phi^1_{bc, S},
\eeno
where $\phi^0_{bc, S}$ satisfies the following Stokes equation:
\begin{align}\label{eq: Stokes}
\left\{
\begin{aligned}
&(\pa_t -\Delta_\e)\Delta_\e\phi^0_{bc, S}=0,\quad (x, y)\in \mathbb{T}\times (0,+\infty)\\
&\phi^0_{bc, S}|_{y=0}=0,\quad \pa_y\phi^0_{bc, S}|_{y=0}=h^0,\\
&\phi^0_{bc, S}|_{t=0}=0,
\end{aligned}
\right.
\end{align}
and $\phi^1_{bc, S}$ satisfies the following Stokes equation
\begin{align}\label{eq: Stokes-1}
\left\{
\begin{aligned}
&(\pa_t -\Delta_\e)\Delta_\e\phi^1_{bc, S}=0,\quad (x, y)\in \mathbb{T}\times (-\infty,1)\\
&\phi^1_{bc, S}|_{y=1}=0,\quad \pa_y\phi^1_{bc, S}|_{y=1}=h^1,\\
&\phi^1_{bc, S}|_{t=0}=0,
\end{aligned}
\right.
\end{align}
where $t\in[0,T]$. Here $(h^0,h^1)$ is a given boundary data satisfying $(h^0(t), h^1(t))=0$ for $t=0$ and $t\geq T.$  Here, we point out that $h^i$ is defined by
\beno
h^i=\mathcal{A}(- \pa_y\phi_{slip}|_{y=0,1}+C(t)),
\eeno
where the operator $\mathcal{A}$ is a zero-order operator which is defined later.

In the following, we only give the process  for $\phi^0_{bc, S}.$  The case of $\phi^1_{bc, S}$ is almost the same and we leave details to readers. 

At first, we give zero extension of $\phi^0_{bc, S}$ and $h^0$ with $t\leq 0$ such that we can take Fourier transform in $t.$ Let $\widehat{\phi^0_{bc, S}}=\widehat{\phi^0_{bc, S}}(\zeta, k, y)$ be the Fourier transform of $\phi^0_{bc, S}$ on $x$ and $t$. Then $\widehat{(\phi^0_{bc, S})_\Phi}$ satisfies the ODE:
\begin{align}\label{eq: phi_0-Fourier}
\left\{
\begin{aligned}
&-(\pa_y^2-\e^2|k|^2)^2\widehat{(\phi^0_{bc, S})_\Phi}+(i\zeta+\la\k^\f23)(\pa_y^2-\e^2|k|^2)\widehat{(\phi^0_{bc, S})_\Phi}=0,\quad y>0,\\
&\widehat{(\phi^0_{bc, S})_\Phi}|_{y=0}=0,\quad \pa_y\widehat{(\phi^0_{bc, S})_\Phi}|_{y=0}=\widehat{h^0_\Phi},
\end{aligned}
\right.
\end{align}
where $\zeta\in\mathbb{R}$ and $k\in\mathbb{Z}.$ Assuming the decay of $(|k|\phi^0_{bc, S},\pa_y\phi^0_{bc, S})$ and the boundedness of $\pa_y\phi^0_{bc, S},$ we obtain the formula:
\begin{align}
\widehat{(\phi^0_{bc, S})_\Phi}(\zeta, k, y)=&-\f{e^{-\gamma y}-e^{-\e |k|y}}{\gamma-\e|k|}\widehat{h^0_\Phi}(\zeta, k),\quad y>0\label{formula: phi^0}\\
\gamma=\gamma(\zeta,k,\e,\la)=&\sqrt{\e^2|k|^2+\la\k^\f23+i\zeta},\label{def: gamma}
\end{align}
where the square root is taken so that the real part is positive, and it follows that
\begin{align}\label{relation: gamma}
\e|k|,~\la^\f12 \k^\f13\leq\sqrt{\e^2|k|^2+\la \k^\f23}\leq \Re(\gamma)\leq |\gamma|\leq 2\Re(\gamma).
\end{align}
This inequality will be used frequently. It is easy to calculate that
\begin{align}
\pa_y\widehat{(\phi^0_{bc, S})_\Phi}=&-e^{-\gamma y}\widehat{h^0_\Phi}-\e|k|\widehat{(\phi^0_{bc, S})_\Phi},\label{formula: pa_y phi^0}\\
(\pa_y^2-\e^2|k|^2)\widehat{(\phi^0_{bc, S})_\Phi}=&(\gamma+\e|k|)e^{-\gamma y}\widehat{h^0_\Phi}.\label{formula: om^0}
\end{align}
The formula \eqref{formula: pa_y phi^0} will be used in estimating velocity and \eqref{formula: om^0} will be used in estimating vorticity.	With the same process above, we get the formula for $\widehat{(\phi^1_{bc, S})_\Phi}:$
\begin{align}
\widehat{(\phi^1_{bc, S})_\Phi}(\zeta, k, y)=\f{e^{-\gamma(1-y)}-e^{-\e |k|(1-y)}}{\gamma-\e |k|}\widehat{h^1_\Phi}(\zeta,k),\quad y<1,\label{formula: phi^1}
\end{align}
with $\gamma$ given in \eqref{def: gamma}. It is easy to see
\begin{align}
\pa_y\widehat{(\phi^1_{bc, S})_\Phi}=&e^{-\gamma (1-y)}\widehat{h^1_\Phi}+\e|k|\widehat{(\phi^1_{bc, S})_\Phi},\label{formula: pa_y phi^1}\\
(\pa_y^2-\e^2|k|^2)\widehat{(\phi^1_{bc, S})_\Phi}=&-(\gamma+\e|k|)e^{-\gamma (1-y)}\widehat{h^1_\Phi}.\label{formula: om^1}
\end{align}

\begin{remark}\label{rmk: difference}
For $\e=0$ in \eqref{eq: Stokes}, $\tri_0=\pa_y^2.$
\begin{align}
\widehat{(\phi^0_{bc, S})_\Phi}(\zeta, k, y)=-\f{\widehat{h^0_\Phi}}{\gamma_0}(e^{-\gamma_0 y}-1),\quad \gamma_0=\sqrt{\la\k^\f23+i\zeta}
\end{align}
solves \eqref{eq: phi_0-Fourier} with $\e=0$ and $\widehat{(\phi^0_{bc, S})_\Phi}$ holds $\lim_{y\to +\infty}=\f{\widehat{h^0_\Phi}}{\gamma_0}.$ Though $\widehat{(\phi^0_{bc, S})_\Phi}$ don't tend to zero as $y$ tends to infinity, the solution $\widehat{(\phi^0_{bc, S})_\Phi}$ is only used to correct boundary condition near $y=0$ and we don't care about its value at infinity. It is easy to deduce
\begin{align}
\pa_y\widehat{(\phi^0_{bc, S})_\Phi}=\widehat{h^0_\Phi}e^{-\gamma_0 y},\quad \pa_y^2\widehat{(\phi^0_{bc, S})_\Phi}=-\gamma_0\widehat{h^0_\Phi}e^{-\gamma_0 y},
\end{align}
and we find these two term are decay to zero as $y$ tends to infinity. By the same method, we can get another solution near $y=1:$
\begin{align}
\widehat{(\phi^1_{bc, S})_\Phi}(\zeta, k, y)=\f{\widehat{h^0_\Phi}}{\gamma_0}(e^{-\gamma_0 (1-y)}-1).
\end{align}
These constructions are main difference between $\e=0$ and $\e\neq0,$ but they enjoy the same properties stated below.
\end{remark}

\begin{lemma}\label{lem: na_e phi}
Let $\phi^i_{bc,S}$ be solution of \eqref{eq: Stokes}. It holds that
\begin{align}\label{est: na_e phi^0,1}
\sum_{k\in\mathbb{Z}}\|(\e|k|\widehat{(\phi^i_{bc, S})_\Phi},\pa_y\widehat{(\phi^i_{bc, S})_\Phi})\|_{L^2_{\zeta,y}} \leq& \f{C}{\la^\f14}\sum_{k\in\mathbb{Z}}\|\k^{-\f16}\widehat{h^i_\Phi}\|_{L^2_{\zeta}} ,
\end{align}
 where $i=0,1$ and $L^2_{\zeta, y}=l^2_{\zeta}(L^2_{y}(0,+\infty))$ for $i=0$ and $L^2_{\zeta, y}=l^2_{\zeta}(L^2_y(-\infty,1))$ for $i=1.$
 
 It is also holds that
\begin{align}\label{est: phi^0,1}
\sum_{k\in\mathbb{Z}}\|k\widehat{(\phi^i_{bc, S})_\Phi}\|_{L^2_{\zeta,y}} \leq& \f{C}{\la^{\f12}}\|k\k^{-\f13}  \widehat{h^i_\Phi}\|_{L^2_\zeta} ,
\end{align}
where $i=0,1$ and $L^2_{\zeta, y}=l^2_{\zeta}(L^2_{y}(0,1)).$
\end{lemma}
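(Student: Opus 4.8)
The plan is to work directly from the explicit Fourier representations \eqref{formula: phi^0}--\eqref{formula: om^0} (together with Remark \ref{rmk: difference} for the degenerate case $\e=0$), reducing both \eqref{est: na_e phi^0,1} and \eqref{est: phi^0,1} to a pointwise-in-$(\zeta,k)$ bound of the relevant $L^2_y$ norm by a constant times $|\widehat{h^i_\Phi}(\zeta,k)|$, and then taking $L^2_\zeta$ and summing over $k$. I will only treat $i=0$; the case $i=1$ is identical after the substitution $y\mapsto 1-y$ in \eqref{formula: phi^1}, and the mode $k=0$ is trivial since there $\e|k|\widehat{(\phi^0_{bc,S})_\Phi}$ and $k\widehat{(\phi^0_{bc,S})_\Phi}$ vanish while $\pa_y\widehat{(\phi^0_{bc,S})_\Phi}=-e^{-\gamma y}\widehat{h^0_\Phi}$ with $\gamma=\sqrt{\la+i\zeta}$.

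The one place requiring care is the prefactor $(\gamma-\e|k|)^{-1}$ in \eqref{formula: phi^0}: it is \emph{not} bounded uniformly in $(\zeta,k)$, so one cannot simply estimate $|e^{-\gamma y}-e^{-\e|k|y}|\le 2$ and divide. The remedy, which is the heart of the argument, is the identity
\[
\f{e^{-\gamma y}-e^{-\e|k|y}}{\gamma-\e|k|}=-\int_0^y e^{-\gamma y'-\e|k|(y-y')}\,dy',
\]
valid for all values of $\gamma$ and $\e|k|$. Regarding the right-hand side as the convolution on $(0,+\infty)$ of $e^{-\gamma\,\cdot}$ with $e^{-\e|k|\,\cdot}$ and applying Young's inequality gives the master bound
\[
\Big\|\f{e^{-\gamma y}-e^{-\e|k|y}}{\gamma-\e|k|}\Big\|_{L^2_y(0,+\infty)}\le\|e^{-\gamma\,\cdot}\|_{L^1_y}\,\|e^{-\e|k|\,\cdot}\|_{L^2_y}=\f1{\Re\gamma}\cdot\f1{\sqrt{2\e|k|}},
\]
where $\Re\gamma>0$ by \eqref{relation: gamma}.

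To obtain \eqref{est: na_e phi^0,1} I would multiply the master bound by $\e|k|$ and use $\Re\gamma\ge\sqrt{\e^2|k|^2+\la\k^{\f23}}\ge\sqrt2\,(\e|k|)^{\f12}\la^{\f14}\k^{\f16}$, both consequences of \eqref{relation: gamma}, to get $\|\e|k|\widehat{(\phi^0_{bc,S})_\Phi}\|_{L^2_y}\le C\la^{-\f14}\k^{-\f16}|\widehat{h^0_\Phi}|$. For $\pa_y\widehat{(\phi^0_{bc,S})_\Phi}$ I use \eqref{formula: pa_y phi^0}: the term $\e|k|\widehat{(\phi^0_{bc,S})_\Phi}$ is already controlled, and $\|e^{-\gamma\,\cdot}\widehat{h^0_\Phi}\|_{L^2_y}=(2\Re\gamma)^{-\f12}|\widehat{h^0_\Phi}|\le C\la^{-\f14}\k^{-\f16}|\widehat{h^0_\Phi}|$ since $\Re\gamma\ge\la^{\f12}\k^{\f13}$; then $\|\cdot\|_{L^2_\zeta}$ and summation in $k$ give \eqref{est: na_e phi^0,1}. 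For \eqref{est: phi^0,1} the $y$-interval is now the bounded one $(0,1)$, so it suffices to bound the modulus of the integral above by $\int_0^{+\infty}e^{-\Re\gamma\,y'}dy'=(\Re\gamma)^{-1}$; hence $\|k\widehat{(\phi^0_{bc,S})_\Phi}\|_{L^2_y(0,1)}\le|k|(\Re\gamma)^{-1}|\widehat{h^0_\Phi}|\le C\la^{-\f12}|k|\k^{-\f13}|\widehat{h^0_\Phi}|$, again by $\Re\gamma\ge\la^{\f12}\k^{\f13}$, and $\|\cdot\|_{L^2_\zeta}$ followed by summation in $k$ concludes.

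In short, the only genuine obstacle is realizing that the integral representation has to replace the naive pointwise division; after that the proof is just Young's inequality together with extracting the correct power of $\la$ from the lower bounds on $\Re\gamma$ in \eqref{relation: gamma}.
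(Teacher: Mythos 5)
Your proof is correct and it reaches the same multiplier bounds \eqref{na_e phi^0-1}--\eqref{na_e phi^0-3} by a genuinely different (and arguably cleaner) route. The paper proceeds by a case split depending on whether $\e|k|\leq \f12\la^{\f12}\k^{\f13}$ or $\e|k|\geq \f12\la^{\f12}\k^{\f13}$: in the first regime it uses a lower bound on $|\gamma-\e|k||$, in the second the boundedness of $(1-e^{-z})/z$ on $\Re z>0$. You instead write $\f{e^{-\gamma y}-e^{-\e|k|y}}{\gamma-\e|k|}=-\int_0^y e^{-\gamma y'}e^{-\e|k|(y-y')}\,dy'$, treat the right side as a convolution on $(0,\infty)$, and invoke Young's inequality together with the AM--GM consequence $\Re\gamma\geq\sqrt{\e^2|k|^2+\la\k^{2/3}}\geq\sqrt 2\,(\e|k|)^{1/2}\la^{1/4}\k^{1/6}$ of \eqref{relation: gamma}. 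This eliminates the case analysis in one stroke, and for \eqref{est: phi^0,1} the pointwise bound $\big|\int_0^y e^{-\gamma y'-\e|k|(y-y')}dy'\big|\leq(\Re\gamma)^{-1}$ combined with the finiteness of the interval $(0,1)$ and $\Re\gamma\geq\la^{1/2}\k^{1/3}$ gives the result without distinguishing large and small $\e|k|$. Both arguments are correct and of comparable length; yours packages the two regimes into a single inequality, while the paper's makes the sharpness in each regime more visible (e.g.\ the improved rate $\la^{-3/4}\k^{-1/2}$ when $\e|k|$ is large). Your treatment of $k=0$ and $i=1$ by symmetry and the final step of taking $L^2_\zeta$ and summing in $k$ are all fine.
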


\begin{proof}
We only give the proof for $i=0.$ The case $i=1$ is almost the same and we omit details to readers.

\eqref{est: na_e phi^0,1} follows from \eqref{formula: phi^0}, \eqref{formula: pa_y phi^0} and the Plancherel theorem ,  by observing the estimate for multipliers
\begin{align}
\|e^{-\Re(\gamma)y}\|_{L^2_{y}(0,\infty)}\leq& \f{C}{\la^\f14\k^{\f16}},\label{na_e phi^0-1}\\
\|\e|k|\cdot e^{-\e|k|y}\cdot |\f{1-e^{-(\gamma-\e|k|)y}}{\gamma-\e|k|}|\|_{L^2_y(0,+\infty)}\leq &\f{C}{\la^\f14\k^{\f16}}.\label{na_e phi^0-2}
\end{align}
The estimate \eqref{na_e phi^0-1}	is a direct consequence of 
\begin{align}
\Re(\gamma)\geq \f{1}{\la^\f12\k^\f13}.
\end{align}
For \eqref{na_e phi^0-2}, we divide it into two cases: 1. $\e|k|\leq \f12\la^\f12\k^{\f13}$, and 2. $\e|k|\geq \f12\la^\f12\k^{\f13}.$ In case 1,
\begin{align*}
\Big|\gamma-\e|k|\Big|\geq \f{\e|k|+\la^\f12\k^\f13}{C},
\end{align*}
which implies
\begin{align*}
\|\e|k|\cdot e^{-\e|k|y}\cdot|\f{1-e^{-(\gamma-\e|k|)y}}{\gamma-\e|k|}|\|_{L^2_y(0,+\infty)}\leq &\f{C}{\e|k|+\la^\f12\k^\f13}\|\e|k|e^{-\e|k|y}\|_{L^2_y(0,+\infty)}\\
\leq& C\f{(\e|k|)^\f12}{\e|k|+\la^\f12\k^\f13}\leq \f{C}{\la^\f14\k^{\f16}}.
\end{align*}
 In case 2, we use the bound
 \begin{align*}
 |\f{1-e^{-z}}{z}|\leq C,
 \end{align*}
for $\Re(z)>0,$ which implies that
\begin{align*}
\|\e|k|\cdot e^{-\e|k|y}\cdot |\f{1-e^{-(\gamma-\e|k|)y}}{\gamma-\e|k|}|\|_{L^2_y(0,+\infty)}\leq &\|y\e|k|e^{-\e|k|y}\|_{L^2_y(0,+\infty)}\leq \f{C}{(\e|k|)^\f12}\leq \f{C}{\la^\f14\k^{\f16}}.
\end{align*}
Combining case 1-2 together, we complete \eqref{na_e phi^0-2}, which yields \eqref{est: na_e phi^0,1}. The estimate \eqref{est: phi^0,1} is proved by using \eqref{formula: phi^0}, Placherel theorem and 
\begin{align}\label{na_e phi^0-3}
\|e^{-\e|k|y}\cdot|\f{1-e^{-(\gamma-\e|k|)y}}{\gamma-\e|k|}|\|_{L^2_y(0,1)}\leq &\f{C}{\la^\f12\k^{\f13}}.
\end{align}
Indeed, note that the integral interval is $y\in(0,1)$ and  we also divide it into $\e|k|\leq \f12\la^\f12\k^{\f13}$ and  $\e|k|\geq \f12\la^\f12\k^{\f13}.$ When $\e|k|\geq \f12\la^\f12\k^{\f13},$ similar argument above gives that
\begin{align}
\|e^{-\e|k|y}\cdot |\f{1-e^{-(\gamma-\e|k|)y}}{\gamma-\e|k|}|\|_{L^2_y(0,1)}\leq \f{C}{\la^\f34\k^{\f12}}.
\end{align}
When $\e|k|\leq \f12\la^\f12\k^{\f13}$( with $\e|k|\ll 1$), we compute as
\begin{align*}
\|e^{-\e|k|y}\cdot |\f{1-e^{-(\gamma-\e|k|)y}}{\gamma-\e|k|}|\|_{L^2_y(0,1)}\leq C\|\f{1}{\e|k|+\la^\f12\k^\f13}\|_{L^2_y(0,1)}\leq \f{C}{\la^\f12\k^{\f13}}.
\end{align*}
The finite interval $(0,1)$ is essential here. Thus we complete this lemma.

\end{proof}

 In order to express clearly, we introduce norms related to  $y>0$ and $y<1$ respectively. For any function $f$, we define
\begin{align}
\|f\|_{X^r_i}=\|f_\Phi\|_{L^2_y(I_i;H^r_x)},
\end{align}
where $I_0=(0,+\infty)$ and $I_1=(-\infty,1).$ It is obvious to see $\|\cdot\|_{X^r}\leq\|\cdot\|_{X^r_i}$ for any $i=0,1.$ Using Lemma \ref{lem: na_e phi} above, we can deduce the estimate for $\na_\e\phi^i,$ where $i=0,1.$
\begin{proposition}\label{pro: na_e phi}
Let $\phi^i_{bc,S}$ be solution of \eqref{eq: Stokes}. It holds that
\begin{align}
\int_0^t\|\na_\e\phi^i_{bc,S}\|_{X^{\f73+\f16}_i}^2ds\leq \f{C}{\la^\f12}\int_0^t|h^i|_{X^\f73}^2ds,\\
\int_0^t\|\pa_x\phi^i_{bc,S}\|_{X^\f{5}{3}}^2ds\leq \f{C}{\la}\int_0^t|\pa_x h^i|_{X^\f43}^2ds.
\end{align}

\end{proposition}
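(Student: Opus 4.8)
The strategy is to pass from the explicit Fourier-side formulas \eqref{formula: phi^0}, \eqref{formula: pa_y phi^0}, \eqref{formula: phi^1}, \eqref{formula: pa_y phi^1} to the claimed space-time bounds by Plancherel in $(t,x)$, reducing everything to $L^2_{\zeta,y}$-estimates for the multipliers, exactly as in the proof of Lemma \ref{lem: na_e phi}. The key observation is that $\tau(t)=1-\lambda t$ is supported (together with $h^i$) on $[0,T]$, so the zero extension in $t$ used to define the Fourier transform in $\zeta$ is harmless, and $\int_0^t\|\cdot\|^2\,ds$ is controlled by the full $l^2_\zeta$-norm; the factor $\langle k\rangle^{2/3}$ lost in the weight $e^{\tau\langle k\rangle^{2/3}}$ is already accounted for because we are comparing $X^{7/3}$ (resp. $X^{5/3}$) on the left with $X^{7/3}$ (resp. $X^{4/3}$ after an $x$-derivative) on the right — that is, there is a clean gain of one-sixth of a derivative coming from $\Re(\gamma)\gtrsim \lambda^{1/2}\langle k\rangle^{1/3}$.

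\textbf{Step 1: reduce to multiplier estimates.} By Plancherel in $x$ and $t$, $\int_0^t\|\na_\e\phi^i_{bc,S}\|_{X^{5/2}_i}^2\,ds \le \sum_{k}\langle k\rangle^{5}\|(\e|k|\widehat{(\phi^i_{bc,S})_\Phi},\pa_y\widehat{(\phi^i_{bc,S})_\Phi})\|_{L^2_{\zeta,y}}^2$, and similarly the $\pa_x$ quantity reduces to $\sum_k |k|^2\langle k\rangle^{10/3}\|\widehat{(\phi^i_{bc,S})_\Phi}\|_{L^2_{\zeta,y(0,1)}}^2$. So it suffices to prove the \emph{pointwise-in-$k$} bounds
\begin{align*}
\|(\e|k|\widehat{(\phi^i_{bc,S})_\Phi},\pa_y\widehat{(\phi^i_{bc,S})_\Phi})\|_{L^2_{\zeta,y}}
&\le \f{C}{\la^{1/4}}\,\langle k\rangle^{-1/6}\,\|\widehat{h^i_\Phi}\|_{L^2_\zeta},\\
\||k|\widehat{(\phi^i_{bc,S})_\Phi}\|_{L^2_{\zeta,y(0,1)}}
&\le \f{C}{\la^{1/2}}\,|k|\langle k\rangle^{-1/3}\,\|\widehat{h^i_\Phi}\|_{L^2_\zeta},
\end{align*}
summed against $\langle k\rangle^{7/3}$, resp. $\langle k\rangle^{4/3}$; but these are exactly \eqref{est: na_e phi^0,1} and \eqref{est: phi^0,1} of Lemma \ref{lem: na_e phi} (after summing in $k$ the pointwise statements encoded in its proof).

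\textbf{Step 2: invoke Lemma \ref{lem: na_e phi} and conclude.} Applying \eqref{est: na_e phi^0,1} with the weight $\langle k\rangle^{7/3}$ distributed as $\langle k\rangle^{7/3}=\langle k\rangle^{5/2}\cdot\langle k\rangle^{-1/6}$ gives $\int_0^t\|\na_\e\phi^i_{bc,S}\|_{X^{5/2}_i}^2\,ds \le \tfrac{C}{\la^{1/2}}\int_0^t |h^i|_{X^{7/3}}^2\,ds$, which is the first assertion with $\tfrac73+\tfrac16=\tfrac52$. For the second, \eqref{est: phi^0,1} gives $\||k|\widehat{(\phi^i_{bc,S})_\Phi}\|_{L^2_{\zeta,y(0,1)}}\le \tfrac{C}{\la^{1/2}}|k|\langle k\rangle^{-1/3}\|\widehat{h^i_\Phi}\|_{L^2_\zeta}$; summing against $\langle k\rangle^{5/3}$ and writing $\langle k\rangle^{5/3}|k|\langle k\rangle^{-1/3}=|k|\langle k\rangle^{4/3}$ on the right yields $\int_0^t\|\pa_x\phi^i_{bc,S}\|_{X^{5/3}}^2\,ds\le \tfrac{C}{\la}\int_0^t |\pa_x h^i|_{X^{4/3}}^2\,ds$. (The finite $y$-interval $(0,1)$ in \eqref{est: phi^0,1} is what makes the $\la^{-1/2}$-gain in the second estimate possible, exactly as emphasized after \eqref{na_e phi^0-3}.)

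\textbf{Where the work is.} Strictly speaking there is no new obstacle: the proposition is a bookkeeping corollary of Lemma \ref{lem: na_e phi} together with Plancherel. The one point requiring care is the correct placement of the weights $\langle k\rangle^s$ — one must verify that the extra $\langle k\rangle^{1/6}$ (resp. the trade of one $|k|$ against $\langle k\rangle^{1/3}$) appearing in Lemma \ref{lem: na_e phi} is precisely the margin between the Sobolev index on the right-hand side and that on the left-hand side, so that no derivative is lost and the claimed powers of $\la$ survive. A secondary point, already handled in the setup, is that the time integral $\int_0^t(\cdot)\,ds$ is dominated by the $l^2_\zeta$ Fourier-in-time norm because $h^i$ and $\tau$ vanish outside $[0,T]$; no further argument is needed there.
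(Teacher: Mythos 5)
Your proposal is correct and follows essentially the same route as the paper, whose proof of this proposition is a one-line invocation of \eqref{est: na_e phi^0,1} and \eqref{est: phi^0,1}. Your Step 1 (reduction to pointwise-in-$k$ multiplier bounds via Plancherel in $(t,x)$) and Step 2 (distributing the $\langle k\rangle$-weights so that $\langle k\rangle^{5/2}\cdot\langle k\rangle^{-1/6}=\langle k\rangle^{7/3}$ and $\langle k\rangle^{5/3}|k|\langle k\rangle^{-1/3}=|k|\langle k\rangle^{4/3}$) is exactly the bookkeeping the authors leave implicit.
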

\begin{proof}

The proof is done by using \eqref{est: na_e phi^0,1} and \eqref{est: phi^0,1}.

\end{proof}

Next, we give the estimate to boundary term $\phi^0_{bc,S}|_{y=1}$ and $\phi^1_{bc,S}|_{y=0.}$
\begin{lemma}\label{lem: phi^i|_y=1-i}
For any $M\geq0$ and $i=0,1,$ it holds that
\begin{align}\label{est: (e|k|)^M phi^i|_y=1-i}
\int_0^t|(\e\pa_x)^M\phi^i_{bc,S}|_{y=1-i}|_{X^{r+\f13}}^2ds\leq &\f{C}{\la}\int_0^t|h^i|_{X^{r}}^2ds,
\end{align}
and
\begin{align}\label{est: (e|k|)^M pa_y phi^i|_y=1-i}
\int_0^t|(\e\pa_x)^M\pa_y\phi^i_{bc,S}|_{y=1-i}|_{X^{r+\f13}}^2ds\leq &\f{C}{\la}\int_0^t|h^i|_{X^{r}}^2ds,
\end{align}
for any $r\geq0.$
\end{lemma}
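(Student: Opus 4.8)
The plan is to read everything off the explicit Fourier-side solution formulas for $\phi^i_{bc,S}$, and then to reduce each time-integrated boundary estimate to a pointwise bound on a Fourier multiplier in $(\zeta,k)$. By the reflection $y\mapsto1-y$, which interchanges \eqref{formula: phi^0}--\eqref{formula: pa_y phi^0} with \eqref{formula: phi^1}--\eqref{formula: pa_y phi^1}, it suffices to treat $i=0$, whose far boundary is $y=1$. Setting $y=1$ in \eqref{formula: phi^0} and \eqref{formula: pa_y phi^0}, the space-time Fourier transforms of the two traces to be estimated are
\[
(\e|k|)^M\,\widehat{(\phi^0_{bc,S})_\Phi}(\zeta,k,1)=-(\e|k|)^M\,\f{e^{-\gamma}-e^{-\e|k|}}{\gamma-\e|k|}\,\widehat{h^0_\Phi}(\zeta,k),
\]
while $\pa_y\widehat{(\phi^0_{bc,S})_\Phi}|_{y=1}=-e^{-\gamma}\widehat{h^0_\Phi}-\e|k|\,\widehat{(\phi^0_{bc,S})_\Phi}|_{y=1}$, so the normal-derivative trace is the sum of the first expression with $M$ raised to $M+1$ and of the multiplier $(\e|k|)^M e^{-\gamma}$ acting on $h^0$. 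After zero-extending $h^0$ (hence $\phi^0_{bc,S}$) to $t\le0$ and using causality of the Stokes problem to truncate $h^0$ at time $t$ without changing the traces on $[0,t]$, Plancherel in $t$ and $x$ reduces \eqref{est: (e|k|)^M phi^i|_y=1-i} and \eqref{est: (e|k|)^M pa_y phi^i|_y=1-i} to the two uniform bounds
\[
\k^{\f13}(\e|k|)^M\,\Big|\f{e^{-\gamma}-e^{-\e|k|}}{\gamma-\e|k|}\Big|\le \f{C}{\la^{\f12}},\qquad \k^{\f13}(\e|k|)^M\,e^{-\Re(\gamma)}\le \f{C}{\la^{\f12}},
\]
valid for all $\zeta\in\RR$, $k\in\ZZ$, $\e\in(0,1]$, $\la\ge1$; squared, these pull out of the Plancherel sum in $(\zeta,k)$ against $|\widehat{h^0_\Phi}|^2$ and produce the gain $\la^{-1}$.

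The second bound is immediate: by \eqref{relation: gamma} one has $\Re(\gamma)\ge\sqrt{\e^2|k|^2+\la\k^{2/3}}\ge\tfrac12(\e|k|+\la^{\f12}\k^{\f13})$, hence $(\e|k|)^M e^{-\Re(\gamma)}\le(\e|k|)^M e^{-\e|k|/2}\,e^{-\la^{1/2}\k^{1/3}/2}\le C_M\,e^{-\la^{1/2}\k^{1/3}/2}$, and $\k^{\f13}e^{-\la^{1/2}\k^{1/3}/2}\le C\la^{-1/2}$ by the elementary inequality $x\,e^{-cx}\le(ec)^{-1}$ with $x=\k^{\f13}$, $c=\tfrac12\la^{\f12}$. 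The first bound is the only delicate point, because the denominator $\gamma-\e|k|$ can be small --- precisely when $\e|k|\gg\la^{\f12}\k^{\f13}$. I would circumvent it with the exact divided-difference identity $\f{e^{-a}-e^{-b}}{a-b}=\int_0^1 e^{-(sa+(1-s)b)}\,ds$: since $\Re(\gamma)\ge\e|k|$, this yields $\Big|\f{e^{-\gamma}-e^{-\e|k|}}{\gamma-\e|k|}\Big|\le e^{-\e|k|}\min\!\big(1,(\Re(\gamma)-\e|k|)^{-1}\big)$, trading the dangerous denominator for the harmless factor $e^{-\e|k|}$. A short regime split, in the same spirit as in Lemma \ref{lem: na_e phi} and Proposition \ref{pro: na_e phi}, then finishes it: if $\e|k|\le\la^{\f12}\k^{\f13}$ then $\Re(\gamma)-\e|k|\ge\la\k^{2/3}\big/\big(2\sqrt{\e^2|k|^2+\la\k^{2/3}}\big)\ge c\,\la^{\f12}\k^{\f13}$, so with $(\e|k|)^M e^{-\e|k|}\le C_M$ one gets $\k^{\f13}(\e|k|)^M\Big|\f{e^{-\gamma}-e^{-\e|k|}}{\gamma-\e|k|}\Big|\le C\la^{-1/2}$; if $\e|k|\ge\la^{\f12}\k^{\f13}$ then $e^{-\e|k|}\le e^{-\e|k|/2}e^{-\la^{1/2}\k^{1/3}/2}$, so $\k^{\f13}(\e|k|)^M e^{-\e|k|}\le C_M\,\k^{\f13}e^{-\la^{1/2}\k^{1/3}/2}\le C\la^{-1/2}$ again.

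Assembling --- squaring the two multiplier bounds, multiplying by $|\widehat{h^0_\Phi}(\zeta,k)|^2$, summing over $k$, integrating in $\zeta$, and undoing Plancherel --- gives \eqref{est: (e|k|)^M phi^i|_y=1-i} and \eqref{est: (e|k|)^M pa_y phi^i|_y=1-i} for $i=0$, and the reflection $y\mapsto1-y$ closes the case $i=1$. The single genuine obstacle is the near-coincidence $\gamma\approx\e|k|$ in the first multiplier; the divided-difference representation disposes of it cleanly, which works precisely because here we only need the far-boundary traces to be small rather than sharp in regularity.
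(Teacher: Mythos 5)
Your proposal is correct and follows essentially the same route as the paper: for $i=0$ you set $y=1$ in the explicit Fourier-side formulas \eqref{formula: phi^0} and \eqref{formula: pa_y phi^0}, reduce the time-integrated trace estimates via Plancherel to the pointwise multiplier bounds $\k^{\f13}(\e|k|)^M\big|\f{e^{-\gamma}-e^{-\e|k|}}{\gamma-\e|k|}\big|\le C\la^{-\f12}$ and $\k^{\f13}(\e|k|)^M e^{-\Re(\gamma)}\le C\la^{-\f12}$, and get $i=1$ by reflection -- exactly the paper's two displayed bounds (the first being precisely its \eqref{est: (e|k|)^M phi^0|_y=1-1}). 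The only difference is that you actually supply the proof of that first multiplier bound (divided-difference identity plus the regime split in $\e|k|$ versus $\la^{\f12}\k^{\f13}$), which the paper asserts without justification, and your version of the second bound corrects a small typographical slip in the paper's exponent.
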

\begin{proof}
We only give the proof for the case $i=0$, the case $i=1$ is similar and we omit details to readers. Taking $y=1$ in \eqref{formula: phi^0} and using 
\begin{align}\label{est: (e|k|)^M phi^0|_y=1-1}
\Big|(\e|k|)^M \cdot e^{-\e|k|} \cdot \f{e^{-(\gamma-\e|k|)}-1}{\gamma-\e|k|}\Big|\leq \f{C}{\la^\f12\k^\f13},
\end{align}
  we get
  \begin{align*}
  \int_0^t|(\e|k|)^M\phi^0_{bc,S}|_{y=1}|_{X^{r+\f13}}^2ds\leq &\f{C}{\la}\int_0^t|h^0|_{X^{r}}^2ds.
  \end{align*}

On the other hand, we refer to \eqref{formula: pa_y phi^0} and take $y=1$ in it  by noticing
\begin{align*}
\Big|e^{-\Re(\gamma)}(\e|k|)^M\Big|\leq &\Big|e^{-\f12\e|k|}(\e|k|)^Me^{-\f12\la^\f12\k^\f16}\Big|\leq Ce^{-\f12\la^\f12\k^\f13}\leq \f{C}{(\la\k^\f23)^{N/2}},
\end{align*}
for  any $N\geq0$, and combining with \eqref{est: (e|k|)^M phi^0|_y=1-1} to deduce
\begin{align}
\int_0^t|(\e\pa_x)^M\pa_y\phi^0_{bc,S}|_{y=1}|_{X^{r+\f13}}^2ds\leq &\f{C}{\la}\int_0^t|h^0|_{X^{r}}^2ds.
\end{align}
Thus, we finish our proof.
\end{proof}

\medskip

In the end of this subsection, we give some weight estimates of vorticity $\om^i_{bc,S}=\Delta_\e\phi^i_{bc,S}$. Denote
\begin{align}\label{def: varphi^i}
\varphi^0(y)=y,\quad \varphi^1(y)=1-y.
\end{align}

\begin{proposition}\label{pro: om^i}
It holds that
\begin{align}
|\widehat{(\om^i_{bc,S})_\Phi}(\zeta,k,y)|+|\varphi^i\pa_y \widehat{(\om^i_{bc,S})_\Phi}(\zeta,k,y)|\leq& C(|\gamma|+\e|k|)e^{-\Re(\gamma)\varphi^i}|\widehat{h^i_\Phi}(\zeta,k)|.
\end{align}
As a consequences, we get for $\theta'\in[-\f12,2]$
\begin{align*}
\int_0^t \|(\varphi^i)^{1+\theta'} \om^i_{bc,S}\|_{X^{\f73+\f13(\theta'+\f12)}_i}^2+\|(\varphi^i)^{2+\theta'}(\pa_y,\e|k|)\om^i_{bc,S}\|_{X^{\f73+\f13(\theta'+\f12)}_i}^2ds\leq \f{C}{\la^{\f12+\theta'}}\int_0^t |h^i|_{X^{\f73}}^2ds.
\end{align*}

\end{proposition}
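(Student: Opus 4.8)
The plan is to reduce everything to an explicit pointwise bound on the Fourier transform of the vorticity and then run Plancherel in $t$ together with one elementary $y$-integration.

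First I would record the pointwise formula. Since $\om^i_{bc,S}=\Delta_\e\phi^i_{bc,S}$, the identities \eqref{formula: om^0} and \eqref{formula: om^1} give $\widehat{(\om^0_{bc,S})_\Phi}(\zeta,k,y)=(\gamma+\e|k|)e^{-\gamma y}\widehat{h^0_\Phi}$ and $\widehat{(\om^1_{bc,S})_\Phi}(\zeta,k,y)=-(\gamma+\e|k|)e^{-\gamma(1-y)}\widehat{h^1_\Phi}$, whence $\pa_y\widehat{(\om^0_{bc,S})_\Phi}=-\gamma(\gamma+\e|k|)e^{-\gamma y}\widehat{h^0_\Phi}$ and similarly for $i=1$. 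The bound $|\widehat{(\om^i_{bc,S})_\Phi}|\le(|\gamma|+\e|k|)e^{-\Re(\gamma)\varphi^i}|\widehat{h^i_\Phi}|$ is then immediate. For the $\pa_y$ term I would peel the factor $\varphi^i|\gamma|$ off $\varphi^i|\gamma|\,|\gamma+\e|k||\,e^{-\Re(\gamma)\varphi^i}$ using $\sup_{s\ge0}se^{-s/2}<\infty$ and $\Re(\gamma)\ge\tfrac12|\gamma|$ from \eqref{relation: gamma}; this costs only a fixed fraction of the exponential and leaves the claimed bound (the change of constant in the exponent being harmless). Throughout, \eqref{relation: gamma} is the workhorse, in the forms $\e|k|\le\Re(\gamma)$, $|\gamma|\le2\Re(\gamma)$, and $\Re(\gamma)\ge\la^{1/2}\langle k\rangle^{1/3}$.

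Next I would pass to the space--time estimate. After extending $h^i$ (hence $\phi^i_{bc,S}$) by zero to $t\le0$ and $t\ge T$, Plancherel in $t$ turns $\int_0^t\|g\|^2\,ds$ into $\sum_k\int_\RR\|\widehat{g_\Phi}(\zeta,k,\cdot)\|_{L^2_y(I_i)}^2\,d\zeta$ up to a constant. Taking $g=(\varphi^i)^{1+\theta'}\om^i_{bc,S}$, and for the second piece $g=(\varphi^i)^{2+\theta'}(\pa_y,\e|k|)\om^i_{bc,S}$ together with $|\pa_y\widehat{(\om^i_{bc,S})_\Phi}|,\,\e|k|\,|\widehat{(\om^i_{bc,S})_\Phi}|\le C|\gamma|^2e^{-\Re(\gamma)\varphi^i}|\widehat{h^i_\Phi}|$, the whole matter comes down to the integral $\int_{I_i}(\varphi^i)^{2m}e^{-2\Re(\gamma)\varphi^i}\,dy=\int_0^\infty s^{2m}e^{-2\Re(\gamma)s}\,ds=C_m\,(\Re(\gamma))^{-(2m+1)}$ with $m=1+\theta'$, resp. $m=2+\theta'$; its convergence needs $2m+1>0$, i.e. $\theta'\ge-\tfrac12$, which is exactly the lower restriction on $\theta'$ (the upper one just keeps the Sobolev index and weight in the range needed later). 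One is then left with $C\sum_k\int_\RR\langle k\rangle^{2r}|\gamma|^{2}(\Re(\gamma))^{-(3+2\theta')}|\widehat{h^i_\Phi}|^2\,d\zeta$ for the first piece and the same with $|\gamma|^2\mapsto|\gamma|^4$ and $3+2\theta'\mapsto5+2\theta'$ for the second, where $r=\tfrac73+\tfrac13(\theta'+\tfrac12)$.

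Finally I would do the exponent bookkeeping. Cancelling the numerator powers of $|\gamma|$ against $\Re(\gamma)$ via $|\gamma|\le2\Re(\gamma)$, both pieces are dominated by $C\sum_k\int_\RR\langle k\rangle^{2r}(\Re(\gamma))^{-(1+2\theta')}|\widehat{h^i_\Phi}|^2\,d\zeta$ — the point being that in the derivative piece the extra $|\gamma|^2$ is exactly eaten by the two extra powers of $1/\Re(\gamma)$ produced by the heavier weight, which is why the weights must be $(\varphi^i)^{1+\theta'}$ and $(\varphi^i)^{2+\theta'}$ precisely. Since $1+2\theta'\ge0$ and $\Re(\gamma)\ge\la^{1/2}\langle k\rangle^{1/3}$, one gets $(\Re(\gamma))^{-(1+2\theta')}\le\la^{-(1/2+\theta')}\langle k\rangle^{-(1+2\theta')/3}$; combined with $2r=\tfrac{14}{3}+\tfrac{2\theta'+1}{3}$ this makes $\langle k\rangle^{2r}\langle k\rangle^{-(1+2\theta')/3}=\langle k\rangle^{14/3}$, and Plancherel back in $t$ identifies $\sum_k\int_\RR\langle k\rangle^{14/3}|\widehat{h^i_\Phi}|^2\,d\zeta$ with $\int_0^t|h^i|_{X^{7/3}}^2\,ds$, while the factor $\la^{-(1/2+\theta')}$ survives. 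The case $i=1$ is the mirror image under $y\mapsto1-y$. The only delicate point — and the one I would check most carefully — is this exponent arithmetic: that one pointwise bound simultaneously closes the $\om^i_{bc,S}$ and the $(\pa_y,\e|k|)\om^i_{bc,S}$ estimates, and that the weight powers, the Sobolev index $\tfrac73+\tfrac13(\theta'+\tfrac12)$, and the gain $\la^{-(1/2+\theta')}$ are mutually consistent; beyond that the computation is routine.
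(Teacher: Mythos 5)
Your proof is correct and follows the same route as the paper: use the explicit formulas \eqref{formula: om^0}--\eqref{formula: om^1} for $\widehat{(\om^i_{bc,S})_\Phi}$, peel off the polynomial weight against the exponential via $\Re(\gamma)\ge\tfrac12|\gamma|$, and then Plancherel in $(t,x)$ together with the Gamma-integral $\int_0^\infty s^{2m}e^{-2\Re(\gamma)s}\,ds\sim(\Re(\gamma))^{-(2m+1)}$ and the lower bound $\Re(\gamma)\ge\la^{1/2}\langle k\rangle^{1/3}$ from \eqref{relation: gamma}. The exponent bookkeeping you worry about checks out, and your observation that the stated pointwise bound really carries $e^{-c\Re(\gamma)\varphi^i}$ for some $c<1$ (a harmless abuse of notation) is the right reading; the paper's own one-line proof records exactly the multiplier estimate $\|(\varphi^i)^{1+m}|\gamma|e^{-\Re(\gamma)\varphi^i}\|_{L^2_y(I_i)}\le(C\la^{-1/2}\langle k\rangle^{-1/3})^{m+1/2}$ that your computation reproduces.
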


\begin{proof}
The result is obtained by using formula \eqref{def: gamma},  \eqref{formula: om^0} and \eqref{formula: om^1}, the Plancherel theorem and by observing that multiplier $\varphi^i(y)$ gains $\f{1}{\la^\f12\k^\f13}$. More precisely,
\begin{align*}
\|(\varphi^i)^{1+m}|\gamma|e^{-\Re(\gamma)\varphi^i}\|_{L^2_y(I_i)}\leq (\f{C}{\la^\f12\k^\f13})^{m+\f12}.
\end{align*}
Thus we complete the proof.

\end{proof}

Based on the above proposition, we have more estimates on $\om^i_{bc,S}$:
\begin{proposition}\label{pro: Estimate for om^i-transport}
Let $\theta\in[0,2].$ It holds that
\begin{align}
&\int_0^t\|\varphi^i\Delta_\e\phi^i_{bc,S}\|_{X^{\f73+\f16}_i}^2+ \|(\varphi^i)^2\pa_y\Delta_\e\phi^i_{bc,S}\|_{X^{\f73+\f16}_i}^2ds\leq \f{C}{\la^\f12}\int_0^t|h^i|_{X^\f73}^2ds,\label{est: om^i-1}\\
&\int_0^t\|\D^{\f{\theta}{3}-\f13}(\varphi^i)^{\theta+\f32}(\pa_x\Delta_\e\phi^i_{bc,S})\|_{X^2_i}^2ds\leq \f{C}{\la^{\theta+1}}
\int_0^t|h^i|_{X^\f73}^2ds,\label{est: om^i-2}\\
&\int_0^t\|\D^{\f{\theta}{3}-\f13}(\varphi^i)^{\theta+\f32}(\pa_y\Delta_\e\phi^i_{bc,S})\|_{X^2_i}^2ds\leq \f{C}{\la^{\theta}}\int_0^t|h^i|_{X^\f53}^2ds.\label{est: om^i-3}
\end{align}

\end{proposition}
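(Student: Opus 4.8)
The plan is to prove Proposition \ref{pro: Estimate for om^i-transport} as a direct consequence of Proposition \ref{pro: om^i}, which already contains the fundamental pointwise multiplier bound on $\widehat{(\om^i_{bc,S})_\Phi}$ and $\varphi^i\pa_y\widehat{(\om^i_{bc,S})_\Phi}$ in terms of $(|\gamma|+\e|k|)e^{-\Re(\gamma)\varphi^i}|\widehat{h^i_\Phi}|$, together with the key gain estimate
\[
\|(\varphi^i)^{1+m}|\gamma|\,e^{-\Re(\gamma)\varphi^i}\|_{L^2_y(I_i)}\leq \Bigl(\frac{C}{\la^{\f12}\k^{\f13}}\Bigr)^{m+\f12}.
\]
By working on the Fourier side in $(\zeta,k)$, applying the Plancherel theorem in $t$ (using the zero extension of $h^i$ and $\phi^i_{bc,S}$ for $t\le 0$ and $t\ge T$) and in $x$, each of the three claimed inequalities reduces to choosing the correct value of $m$ and tracking how many powers of $\k^{\f13}$ (hence of $\D$) and $\la^{\f12}$ are produced.

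First I would establish \eqref{est: om^i-1}. Here $\varphi^i\Delta_\e\phi^i_{bc,S}$ corresponds to $\varphi^i\widehat{(\om^i_{bc,S})_\Phi}$, so by Proposition \ref{pro: om^i} with the weight exponent $1$ (i.e. $m=1$ in the gain estimate, bounding $|\gamma|+\e|k|\le 2|\gamma|$ via \eqref{relation: gamma}) one gets a factor $\la^{-1}\k^{-2/3}$ in $L^2_y$; since $\k^{-2/3}$ absorbs one unit of $\D^{2/3}$, i.e. raises the $H^r_x$ index by $\f23$, one passes from $|h^i|_{X^{\f73}}$ on the right to a norm $\|\cdot\|_{X^{\f73+\f16}_i}$ on the left after also spending part of the gain to convert $\la^{-1}$ into the stated $\la^{-1/2}$ (the remaining $\la^{-1/2}\k^{-1/6}$ is exactly the $\f16$ boost in regularity, matching the $\theta'=-\f12$ borderline case of the last display of Proposition \ref{pro: om^i}). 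The term $(\varphi^i)^2\pa_y\Delta_\e\phi^i_{bc,S}$ is handled the same way using the $\varphi^i\pa_y\widehat{(\om^i_{bc,S})_\Phi}$ half of the pointwise bound, now with weight exponent $2$. Indeed, both are literally the $\theta'=-\f12$ instance of the concluding estimate of Proposition \ref{pro: om^i}, so this step is essentially a citation.

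Next, for \eqref{est: om^i-2} and \eqref{est: om^i-3} the extra ingredient is that $\pa_x$ contributes a factor $|k|$, which on the support where it matters is comparable to $\e|k|\cdot\e^{-1}\le |\gamma|$; more precisely one uses $|k|\le \k$ and pays $\D^{1/3}\cdot\D^{1/3}=\D^{2/3}$ to trade $|k|$ for an extra $|\gamma|\,e^{-\Re(\gamma)\varphi^i}$ times weight loss, which is again governed by the gain estimate. Choosing $m=\theta+\f12$ in the gain estimate produces the factor $\la^{-(\theta+1)/2}\k^{-(\theta+1)/3}$, and combined with the two explicit $|\gamma|$'s (one from $\om^i_{bc,S}$ itself carrying $|\gamma|$, one from converting $\pa_x$) one lands on $\la^{-(\theta+1)}$ in \eqref{est: om^i-2}, with the leftover negative powers of $\k$ accounted for by the prefactor $\D^{\f{\theta}{3}-\f13}$ and by the drop from $X^{\f73}$ to $X^2$ (a difference of $\f13$); the estimate \eqref{est: om^i-3} is the same computation with one fewer $|\gamma|$ (no $\pa_x$, so no conversion loss but one fewer power of $\gamma$ from the multiplier), giving $\la^{-\theta}$ and the slightly larger source norm $X^{\f53}$. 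Throughout, I would repeatedly invoke \eqref{relation: gamma} to replace $\e|k|$ and $\la^{1/2}\k^{1/3}$ by $\Re(\gamma)\sim|\gamma|$ in both directions, and handle the boundary-layer decay $e^{-\Re(\gamma)\varphi^i}$ against the polynomial weights $(\varphi^i)^{\,\cdot}$ by the standard one-dimensional computation $\int_0^\infty (\a y)^{2a}e^{-2y}\,dy\sim \a^{-2a-1}$ underlying the gain estimate.

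The main obstacle — really the only non-bookkeeping point — is verifying uniformly in $\e$, $k$, $\zeta$ the two-case multiplier bounds of the type appearing in Lemma \ref{lem: na_e phi} and Proposition \ref{pro: om^i}, namely that quantities like $(\e|k|)^M e^{-\e|k|\varphi^i}\bigl|\tfrac{1-e^{-(\gamma-\e|k|)\varphi^i}}{\gamma-\e|k|}\bigr|$ and $(\varphi^i)^{1+m}|\gamma|e^{-\Re\gamma\,\varphi^i}$ genuinely gain the advertised powers of $\la^{1/2}\k^{1/3}$; this forces the split into the regimes $\e|k|\le \f12\la^{1/2}\k^{1/3}$ and $\e|k|\ge \f12\la^{1/2}\k^{1/3}$, exactly as in the proof of Lemma \ref{lem: na_e phi}. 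Since Proposition \ref{pro: om^i} already packages precisely the bound we need for $\om^i_{bc,S}$ and its $\pa_y$-derivative, the proof of Proposition \ref{pro: Estimate for om^i-transport} will consist of (i) reducing to the Fourier side, (ii) applying the pointwise bound of Proposition \ref{pro: om^i}, (iii) moving powers of $\k$ between the weight gain and the $H^r_x$-index, and (iv) reading off the stated powers of $\la$ — with no new hard analysis, so I would present it compactly, citing Proposition \ref{pro: om^i} and \eqref{relation: gamma} and spelling out only the choice of exponents for each of the three inequalities.
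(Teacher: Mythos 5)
Your overall strategy is exactly the paper's: everything is read off Proposition~\ref{pro: om^i} (the pointwise multiplier bound on $\widehat{(\om^i_{bc,S})_\Phi}$ and $\varphi^i\pa_y\widehat{(\om^i_{bc,S})_\Phi}$ together with the gain estimate $\|(\varphi^i)^{1+m}|\gamma|e^{-\Re\gamma\varphi^i}\|_{L^2_y}\le (C/(\la^{1/2}\k^{1/3}))^{m+1/2}$), combined with Plancherel in $t,x$ and exponent bookkeeping. The paper's own proof is similarly terse — it plugs $\theta'=\theta+\tfrac12$ and $\theta'=\theta-\tfrac12$ into the last display of Proposition~\ref{pro: om^i} for \eqref{est: om^i-2} and \eqref{est: om^i-3}, and \eqref{est: om^i-1} is the case $\theta'=0$.

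However, several of your exponent claims are wrong, and one of them would make the step fail as written. For \eqref{est: om^i-1}: since $\varphi^i\Delta_\e\phi^i_{bc,S}$ carries the weight $\varphi^i=(\varphi^i)^{1+m}$ with $m=0$ (not $m=1$), the gain from the cited one-dimensional estimate is $(C/(\la^{1/2}\k^{1/3}))^{1/2}=C\la^{-1/4}\k^{-1/6}$ (unsquared), not $\la^{-1}\k^{-2/3}$; and this matches the $\theta'=0$ instance of the concluding display of Proposition~\ref{pro: om^i}, not the $\theta'=-\tfrac12$ borderline as you state. This misidentification matters: at $\theta'=-\tfrac12$ that display returns $X^{7/3}$ regularity and a $\la^{0}$ constant, which does not imply \eqref{est: om^i-1}, whose left-hand side has $X^{7/3+1/6}$ regularity and a $\la^{-1/2}$ gain. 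Only $\theta'=0$ gives both. Similarly, the informal idea of ``spending $\la$ gain to buy regularity'' is not available here — $\la$ and $\k$ are independent parameters and the gain estimate couples them only through the product $\la^{1/2}\k^{1/3}$ — so that phrasing should be dropped.

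For \eqref{est: om^i-2} and \eqref{est: om^i-3}, the bookkeeping you describe via ``trading $\pa_x$ for $|\gamma|$'' is also off: $|k|\le \e^{-1}\cdot\e|k|\le\e^{-1}|\gamma|$ produces an uncontrolled $\e^{-1}$. The paper simply estimates $|k|\le\k$ directly. Then for \eqref{est: om^i-2} the weight exponent on $\widehat\om$ is $\theta+\tfrac32=(1+m)$ with $m=\theta+\tfrac12$ (i.e. $\theta'=\theta+\tfrac12$), and the free $\k^{\theta/3-1/3+1}$ from $\pa_x$ and the prefactor cancels exactly against the $\k^{-(\theta+1)/3}$ from the gain, leaving $\k^{1/3}$ and $\la^{-(\theta+1)/2}$ (unsquared). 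For \eqref{est: om^i-3} one writes $(\varphi^i)^{\theta+3/2}\pa_y\widehat\om = (\varphi^i)^{\theta+1/2}\cdot(\varphi^i\pa_y\widehat\om)$ and applies the multiplier bound to $\varphi^i\pa_y\widehat\om$; this has effective weight $m=\theta-\tfrac12$ (i.e. $\theta'=\theta-\tfrac12$), not ``one fewer power of $\gamma$'' — the $|\gamma|$ factor is the same, what changes is the $\varphi^i$ budget and the absence of the $|k|$ from $\pa_x$. With these corrections the computation is exactly the paper's; I would recommend recording the choice of $\theta'$ for each of the three bounds explicitly ($0$, $\theta+\tfrac12$, $\theta-\tfrac12$) rather than inferring it from factor counting.
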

\begin{proof}
\eqref{est: om^i-1} is a direct result of Lemma \ref{pro: om^i} by taking $\theta=0.$  It is easy to check
\begin{align*}
\|\k^{\f{\theta}{3}-\f13}(\varphi^i)^{\theta+\f32}k\widehat{\Delta_\e(\phi^i_{bc,S})_\Phi}\|_{L^2_y(I_i)}\leq C\f{\k^{\f23+\f{\theta}{3}}}{(\la^\f12\k^\f13)^{\theta+\f12+\f12}}|\widehat{h^i_\Phi}|=\f{C\k^\f13}{\la^{\f12(\theta+1)}}|\widehat{h^i_\Phi}|,
\end{align*}
by taking $\theta'=\theta+\f12$ in Lemma \ref{pro: om^i} and complete \eqref{est: om^i-2}. Similarly, we check
\begin{align*}
\|\k^{\f{\theta}{3}-\f13}(\varphi^i)^{\theta+\f32}\pa_y\widehat{\Delta_\e(\phi^i_{bc,S})_\Phi}\|_{L^2_y(I_i)}\leq \f{C\k^{\f{\theta}{3}-\f13}}{\la^\f{\theta}{2}\k^{\f{\theta}{3}}}|\widehat{h^i_\Phi}|
\leq \f{C}{\la^{\f{\theta}{2}}\k^\f13}|\widehat{h^i_\Phi}|,
\end{align*}
by taking $\theta'=\theta-\f12$ in Lemma \ref{pro: om^i} to complete \eqref{est: om^i-3}.
\end{proof}

\subsection{The estimates of $\phi_{bc, T}$: Vorticity transport estimate.}

 $\phi_{bc, T}$   is defined by
\beno
\phi_{bc, T}=\phi^0_{bc, T}+\phi^1_{bc, T},
\eeno
where $\phi^0_{bc, T}$ is defined by
\begin{align}\label{eq: om^i-transport}
\left\{
\begin{aligned}
&(\pa_t-\Delta_\e)\Delta_\e\phi^0_{bc, T}+u^p\pa_x\Delta_\e\phi^0_{bc, T}+v^p\pa_y\Delta_\e\phi^0_{bc, T}\\
&\quad \quad= -u^p\pa_x\Delta_\e \phi_{bc, S}^0-v^p\pa_y\Delta_\e \phi_{bc, S}^0\eqdef H^0,\quad (x,y)\in \mathbb{T}\times (0,+\infty)\\
&\phi^0_{bc, T}|_{y=0}=0,\quad \Delta_\e\phi^0_{bc, T}|_{y=0}=0,\quad \phi^0_{bc, T}|_{t=0}=0.
\end{aligned}
\right.
\end{align}
and $\phi^1_{bc, T}$ is defined by
\begin{align}\label{eq: om^i-transport-1}
\left\{
\begin{aligned}
&(\pa_t-\Delta_\e)\Delta_\e\phi^1_{bc, T}+u^p\pa_x\Delta_\e\phi^1_{bc, T}+v^p\pa_y\Delta_\e\phi^1_{bc, T}\\
&\quad \quad= -u^p\pa_x\Delta_\e \phi_{bc, S}^1-v^p\pa_y\Delta_\e \phi_{bc, S}^1\eqdef H^1,\quad (x,y)\in \mathbb{T}\times (-\infty, 1)\\
&\phi^1_{bc, T}|_{y=1}=0,\quad \Delta_\e\phi^1_{bc, T}|_{y=1}=0,\quad \phi^1_{bc, T}|_{t=0}=0.
\end{aligned}
\right.
\end{align}
We need to emphasize that we extend $(u^p, v^p)$ to $y\in \mathbb{R}$ by zero which means that $(u^p, v^p)=0$ when $y\in \mathbb{R}\setminus[0,1].$

Before we give the estimates of $\phi^i_{bc, T}$, using  Proposition \ref{pro: Estimate for om^i-transport} and $(u^p, v^p)|_{y=0,1}=0$ to get that
\begin{align}\label{est: H^i}
\int_0^t\|(\varphi^i)^{\f12+\theta}H^i\|_{X^{\f{5}{3}+\f{\theta}{3}}_i}^2ds\leq& C\int_0^t\|(\varphi^i)^{\f32+\theta}(\pa_x\Delta_\e\phi^i_{bc,S}+\pa_y\Delta_\e\phi^i_{bc,S})\|_{X^{\f{5}{3}+\f{\theta}{3}}_i}^2ds\\
\nonumber
\leq&C\int_0^t|h^i|_{X^\f73}^2ds,
\end{align}
 where $\theta=0,1,2$.
 
 \medskip
 
We are in the position to give the estimates of $\phi^i_{bc,T}$:
\begin{proposition}\label{pro: om^i-transport}
Let $\theta=0,1,2$ and $i=0,1$. There exists $\la_0>1$ and $0<T<\min\{T_p, \f{1}{2\lambda}\}$ such that for all $t\in[0,T]$, $\la\geq \la_0$, it holds that 
\begin{align*}
&\|(\varphi^i)^\theta\om^i_{bc,T}\|_{X^{\f{11}{6}+\f{\theta}3}_i}^2+\la\int_0^t\|(\varphi^i)^\theta\om^i_{bc,T}\|_{X^{\f{13}{6}+\f{\theta}3}_i}^2ds\\
&\qquad+\int_0^t\|(\varphi^i)^\theta\na_\e\om^i_{bc,T}\|_{X^{\f{11}{6}+\f{\theta}3}_i}^2ds\leq \f{C}{\la^\f12} \int_0^t|h^i|_{X^\f73}^2ds,
\end{align*}
where $\Delta_\e\phi^i_{bc,T}=\om^i_{bc,T}$ and $\varphi^i$ is given in \eqref{def: varphi^i}.
\end{proposition}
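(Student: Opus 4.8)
The plan is a weighted Gevrey energy estimate carried out directly on the vorticity $\om:=\om^i_{bc,T}=\Delta_\e\phi^i_{bc,T}$, which by \eqref{eq: om^i-transport}--\eqref{eq: om^i-transport-1} solves the forced advection--diffusion equation
$$\pa_t\om-\Delta_\e\om+u^p\pa_x\om+v^p\pa_y\om=H^i\quad\text{in }\mathbb{T}\times I_i,$$
together with $\om|_{y=i}=\phi^i_{bc,T}|_{y=i}=0$, $\om|_{t=0}=0$, and decay as $|y|\to\infty$ (legitimate since $H^i$ is supported in $y\in[0,1]$ and $\pa_t-\Delta_\e+\la\D^{2/3}$ is parabolic). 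The key structural point, compared with the full equation \eqref{eq: phi-1-1}, is that here \emph{no} vorticity-stretching terms $\pa_y\phi\,\pa_x\om^p-\pa_x\phi\,\pa_y\om^p$ are present, so there is no loss of tangential derivative and the hydrostatic trick is not needed; moreover the forcing $H^i$ already comes, with its natural weight, under control via \eqref{est: H^i}.

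I would first apply $e^{\Phi(t,D_x)}$, which produces the damping $\la\D^{2/3}\om_\Phi$ on the left and the tangential commutators $[e^{\Phi},u^p\pa_x]\om$, $[e^{\Phi},v^p\pa_y]\om$ on the right, the latter estimated by Lemma \ref{lem:com-Gev} (they cost strictly less than one $x$-derivative). Then, for each $\theta\in\{0,1,2\}$, set $r_\theta=\tfrac{11}{6}+\tfrac{\theta}{3}$ and test the equation acted on by $\D^{r_\theta}$ against $(\varphi^i)^{2\theta}\D^{r_\theta}\om_\Phi$ over $\mathbb{T}\times I_i$. Since $\varphi^i=\varphi^i(y)$ commutes with $\D$, the time-derivative term gives $\tfrac12\tfrac{d}{dt}\|(\varphi^i)^\theta\om\|_{X^{r_\theta}_i}^2$, the damping term gives $\la\|(\varphi^i)^\theta\om\|_{X^{r_\theta+1/3}_i}^2=\la\|(\varphi^i)^\theta\om\|_{X^{13/6+\theta/3}_i}^2$, and $-\Delta_\e$ gives the good dissipation $\|(\varphi^i)^\theta\na_\e\om\|_{X^{r_\theta}_i}^2$ together with a weight-commutator term which, since $\pa_y^2\varphi^i=0$ and $(\pa_y\varphi^i)^2=1$, equals $\theta(2\theta-1)\|(\varphi^i)^{\theta-1}\om\|_{X^{r_\theta}_i}^2$ on the right-hand side; the boundary contributions vanish because $\om|_{y=i}=0$ and by decay at the other end. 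The transport term is integrated by parts using $\pa_xu^p+\pa_yv^p=0$ and $(u^p,v^p)|_{y=0,1}=0$; the resulting weight-commutator $\int(v^p/\varphi^i)(\varphi^i)^{2\theta}|\D^{r_\theta}\om_\Phi|^2$ is bounded because $v^p$ vanishes at $y=0,1$, so $|v^p/\varphi^i|\le C$, and the $\D^{r_\theta}$-commutators with $u^p\pa_x$, $v^p\pa_y$ are controlled by Lemma \ref{lem:com-S} in terms of $\|(\varphi^i)^\theta\om\|_{X^{r_\theta}_i}$ and $\|(\varphi^i)^\theta\pa_y\om\|_{X^{r_\theta}_i}$ via the Prandtl bounds of Lemma \ref{lem: u^p}.

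The forcing term $\int H^i_\Phi(\varphi^i)^{2\theta}\D^{2r_\theta}\om_\Phi$ is where the bookkeeping must be done carefully: splitting the weight as $(\varphi^i)^{\theta+1/2}\cdot(\varphi^i)^{\theta-1/2}$ and the frequency weight as $\D^{5/3+\theta/3}\cdot\D^{2+\theta/3}$ (note $2r_\theta-(5/3+\theta/3)=2+\theta/3$), Cauchy--Schwarz bounds it by $\|(\varphi^i)^{\theta+1/2}H^i\|_{X^{5/3+\theta/3}_i}\cdot\|(\varphi^i)^{\theta-1/2}\om\|_{X^{2+\theta/3}_i}$; the first factor is exactly what \eqref{est: H^i} controls, and the second must be absorbed into the dissipation — through a Hardy-type inequality $\|(\varphi^i)^{-1/2}g\|_{X^s_i}\lesssim\|\pa_yg\|_{X^s_i}+\|g\|_{X^s_i}$ applied to $g=(\varphi^i)^\theta\om$ (valid since $g|_{y=i}=0$), interpolation between $\|(\varphi^i)^\theta\na_\e\om\|_{X^{r_\theta}_i}$ and $\la^{1/2}\|(\varphi^i)^\theta\om\|_{X^{r_\theta+1/3}_i}$, and Young's inequality; this is precisely what yields the factor $\la^{-1/2}$ on the right-hand side of the claim. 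Finally I would close the estimate by treating $\theta=0,1,2$ in succession: for $\theta=0$ the weight-commutator term is absent, so Gronwall in time closes the estimate (absorbing the lower-order $\|\om\|_{X^{r_0}_i}$- and $e^{\Phi}$-commutator terms into the dissipation, using $T<\tfrac1{2\la}$ and $\la$ large so that $\tau(t)=1-\la t>0$); for $\theta=1$ the weight-commutator $\int_0^t\|\om\|_{X^{13/6}_i}^2\,ds$ is $\le\la^{-1}$ times the $\theta=0$ dissipation, hence $O\!\big(\la^{-3/2}\!\int_0^t|h^i|_{X^{7/3}}^2\big)$, and similarly the $\theta=2$ weight-commutator $\int_0^t\|\varphi^i\om\|_{X^{5/2}_i}^2\,ds$ is controlled by the $\theta=1$ estimate. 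I expect the main obstacle to be exactly this multi-parameter bookkeeping — matching the weights $(\varphi^i)^{\bullet}$, the tangential regularities, and the powers of $\la$ so that \eqref{est: H^i} passes cleanly through the parabolic smoothing and the induction on $\theta$ to give precisely $\tfrac{C}{\la^{1/2}}\int_0^t|h^i|_{X^{7/3}}^2\,ds$; the weight commutators near $y=i$ and the decay of $\om^i_{bc,T}$ at the unbounded end are comparatively routine.
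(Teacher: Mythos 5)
Your proposal follows essentially the same route as the paper: act with $e^{\Phi(t,D_x)}$, exploit that \eqref{eq: om^i-transport} has no vorticity-stretching term so no hydrostatic trick is needed, take the $L^2_y(I_i;H^{11/6+\theta/3}_x)$ inner product against $(\varphi^i)^{2\theta}(\om^i_{bc,T})_\Phi$, control the tangential commutators via Lemmas \ref{lem:com-S}--\ref{lem:com-Gev} and the Prandtl bounds, split the forcing $\int H^i_\Phi(\varphi^i)^{2\theta}\D^{2r_\theta}\om_\Phi$ as $(\varphi^i)^{\theta+1/2}\D^{5/3+\theta/3}\cdot(\varphi^i)^{\theta-1/2}\D^{2+\theta/3}$ to match \eqref{est: H^i}, and close by a downward recursion over $\theta=0,1,2$ with $\la$ large. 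The only material deviation is the treatment of the second factor $\|(\varphi^i)^{\theta-1/2}\om\|_{X^{2+\theta/3}_i}$: you propose the Hardy inequality $\|(\varphi^i)^{-1/2}g\|\lesssim\|\pa_y g\|$ with $g=(\varphi^i)^\theta\om$ combined with a dissipation--damping interpolation for every $\theta$, whereas the paper uses Hardy only for $\theta=0$ and, for $\theta\ge1$, the product-type interpolation $\|(\varphi^i)^{\theta-1/2}\om\|_{X^{2+\theta/3}_i}\le\|(\varphi^i)^{\theta}\om\|_{X^{13/6+\theta/3}_i}^{1/2}\|(\varphi^i)^{\theta-1}\om\|_{X^{11/6+\theta/3}_i}^{1/2}$, which feeds directly into the $\theta\mapsto\theta-1$ recursion. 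Both work; the paper's form makes the recursion (and where the factor $\la^{-1/2}$ comes from) a bit more transparent, while your Hardy route produces an extra $\theta\|(\varphi^i)^{\theta-1}\om\|$ term that you must again send down the recursion. One small inaccuracy: the weight commutator from $-\Delta_\e$ is, before any further integration by parts, the \emph{mixed} term $-\int\pa_y((\varphi^i)^{2\theta})\D^{r_\theta}\pa_y\om_\Phi\,\D^{r_\theta}\om_\Phi$ (the paper's $I_5^i$), not directly $\theta(2\theta-1)\|(\varphi^i)^{\theta-1}\om\|^2$; a second integration by parts as you suggest does give that square (boundary terms vanish for $\theta\ge1$ since $\pa_y(\varphi^i)^{2\theta}|_{y=i}=0$, and the term is absent for $\theta=0$), so the conclusion is the same, but the sign and boundary bookkeeping should be stated.
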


\begin{proof}
Acting $e^{\Phi(t,D_x)}$ on the first equation of \eqref{eq: om^i-transport}, we obtain 
\begin{align}\label{eq: om^i_Phi-transport}
(\pa_t+&\la\D^\f23-\Delta_\e)(\om^i_{bc,T})_\Phi+u^p\pa_x(\om^i_{bc,T})_\Phi+v^p\pa_y(\om^i_{bc,T})_\Phi\\
\nonumber
&+\Big((u^p\pa_x\om^i_{bc,T})_\Phi-u^p\pa_x(\om^i_{bc,T})_\Phi\Big)+\Big((v^p\pa_y\om^i_{bc,T})_\Phi-v^p\pa_y(\om^i_{bc,T})_\Phi\Big)=H^i_\Phi.
\end{align}
Then taking $L^2_y(I_i;H^{\f{11}{6}+\f{\theta}3}_x)$ inner product with $(\varphi^i)^{2\theta}(\om^i_{bc,T})_\Phi$, we get by using $\om^i_{bc,T}|_{y=i}=0,~\pa_xu^p+\pa_y v^p=0$ and integrating by parts that
\begin{align*}
\f12\f{d}{dt}&\|(\varphi^i)^\theta\om^i_{bc,T}\|_{X^{\f{11}{6}+\f{\theta}3}_i}^2+\la\|(\varphi^i)^\theta\om^i_{bc,T}\|_{X^{\f{13}{6}+\f{\theta}3}_i}^2+\|(\varphi^i)^\theta\na_\e\om^i_{bc,T}\|_{X^{\f{11}{6}+\f{\theta}3}_i}^2\\
=&-\int_{\mathcal{S}_i}(\varphi^i)^{2\theta}[\D^{\f{11}{6}+\f{\theta}3},u^p\pa_x+v^p\pa_y](\om^i_{bc,T})_\Phi~ \D^{\f{11}{6}+\f{\theta}3}(\om^i_{bc,T})_\Phi dxdy\\
&+\f12\int_{\mathcal{S}_i}\pa_y \big((\varphi^i)^{2\theta}\big) v^p|\D^{\f{11}{6}+\f{\theta}3}(\om^i_{bc,T})_\Phi|^2dxdy\\
&-\int_{\mathcal{S}_i}\D^{\f{11}{6}+\f{\theta}3}\Big((u^p\pa_x\om^i_{bc,T})_\Phi-u^p\pa_x(\om^i_{bc,T})_\Phi\Big)~\D^{\f{11}{6}+\f{\theta}3}(\om^i_{bc,T})_\Phi(\varphi^i)^{2\theta} dxdy\\
&-\int_{\mathcal{S}_i}\D^{\f{11}{6}+\f{\theta}3}\Big((v^p\pa_y\om^i_{bc,T})_\Phi-v^p\pa_y(\om^i_{bc,T})_\Phi\Big)~\D^{\f{11}{6}+\f{\theta}3}(\om^i_{bc,T})_\Phi(\varphi^i)^{2\theta} dxdy\\
&-\int_{\mathcal{S}_i} \pa_y \big((\varphi^i)^{2\theta}\big) \D^{\f{11}{6}+\f{\theta}3}\pa_y(\om^i_{bc,T})_\Phi~\D^{\f{11}{6}+\f{\theta}3}(\om^i_{bc,T})_\Phi dxdy\\
&+\int_{\mathcal{S}_i}\D^{\f{11}{6}+\f{\theta}3}H^i_\Phi~\D^{\f{11}{6}+\f{\theta}3}(\om^i_{bc,T})_\Phi(\varphi^i)^{2\theta} dxdy\\
=&I_1^i+\cdots+I_6^i,
\end{align*}
where $\mathcal{S}_i=\mathbb{T}\times I_i.$
Integrating on $[0,t)$ with $t\leq T$, we obtain
\begin{align}\label{est: om^i-transport}
\|(\varphi^i)^\theta\om^i_{bc,T}(t)\|_{X^{\f{11}{6}+\f{\theta}3}_i}^2+&2\la\int_0^t\|(\varphi^i)^\theta\om^i_{bc,T}\|_{X^{\f{13}{6}+\f{\theta}3}_i}^2ds+2\int_0^t\|(\varphi^i)^\theta\na_\e\om^i_{bc,T}\|_{X^{\f{11}{6}+\f{\theta}3}_i}^2ds\\
\nonumber
\leq&2\int_0^t|I_1^i|+\cdots+ |I_6^i| ds.
\end{align}

Now, we estimate $I_j^i,~j=1,\cdots,6$ term by term.

\underline{Estimate of $I_1^i.$} It follows from Lemma \ref{lem:com-S} that
\begin{align*}
\|[\D^{\f{11}{6}+\f{\theta}3},u^p\pa_x+v^p\pa_y](\om^i_{bc,T})_\Phi\|_{L^2_x}\leq C(\|(\om^i_{bc,T})_\Phi\|_{H^{\f{11}6+\f{\theta}{3}}_x}+\|\pa_y(\om^i_{bc,T})_\Phi\|_{H^{\f{11}{6}+\f{\theta}3}_x}),
\end{align*}
which deduces that
\begin{align*}
|I_1^i|\leq&C(\|(\varphi^i)^\theta\om^i_{bc,T}\|_{X^{\f{11}{6}+\f{\theta}3}_i}+\|(\varphi^i)^\theta\pa_y\om^i_{bc,T}\|_{X^{\f{11}{6}+\f{\theta}3}_i})\|(\varphi^i)^\theta\om^i_{bc,T}\|_{X^{\f{11}{6}+\f{\theta}3}_i}\\
\leq&\f1{10}\|(\varphi^i)^\theta\pa_y\om^i_{bc,T}\|_{X^{\f{11}{6}+\f{\theta}3}_i}^2+ C\|(\varphi^i)^\theta\om^i_{bc,T}\|_{X^{\f{11}{6}+\f{\theta}3}_i}^2.
\end{align*}

\underline{Estimate of $I_2^i.$} Thanks to 
\begin{align}\label{est: pa_y(varphi^i)}
|\pa_y \big((\varphi^i)^{2\theta}\big) v^p|=|2\theta(\varphi^i)' (\varphi^i)^{2\theta-1}v^p|\leq C\theta(\varphi^i)^{2\theta}|\f{v^p}{\varphi^i}|\leq C\theta(\varphi^i)^{2\theta},
\end{align}
by using $v^p|_{y=i}=0,$ it is obvious to see
\begin{align*}
|I_2^i|\leq C\theta\|(\varphi^i)^\theta\om^i_{bc,T}\|_{X^{\f{11}{6}+\f{\theta}3}_i}^2.
\end{align*}

\underline{Estimate of $I_3^i.$} Applying Lemma  \ref{lem:com-Gev}, we find
\begin{align*}
|I_3^i|\leq&\|(\varphi^i)^\theta\D^{\f{9}{6}+\f{\theta}3}\Big((u^p\pa_x\om^i_{bc,T})_\Phi-u^p\pa_x(\om^i_{bc,T})_\Phi\Big)\|_{L^2_y(I_i; L^2_x)}\|(\varphi^i)^\theta\om^i_{bc,T}\|_{X^{\f{13}{6}+\f{\theta}3}_i}\\
\leq&C\|(\varphi^i)^\theta\om^i_{bc,T}\|_{X^{\f{13}{6}+\f{\theta}3}_i}^2.
\end{align*}

\underline{Estimate of $I_4^i.$} Applying Lemma \ref{lem:product-Gev}, we get
\begin{align*}
|I_4^i|\leq& C\|(\varphi^i)^\theta\pa_y\om^i_{bc,T}\|_{X^{\f{11}{6}+\f{\theta}3}_i}\|(\varphi^i)^\theta\om^i_{bc,T}\|_{X^{\f{11}{6}+\f{\theta}3}_i}\\
\leq& \f1{10}\|(\varphi^i)^\theta\pa_y\om^i_{bc,T}\|_{X^{\f{11}{6}+\f{\theta}3}_i}^2+C\|(\varphi^i)^\theta\om^i_{bc,T}\|_{X^{\f{11}{6}+\f{\theta}3}_i}^2.
\end{align*}

\underline{Estimate of $I_5^i.$} By the fact
\begin{align*}
\pa_y \big((\varphi^i)^{2\theta}\big)=2\theta (\varphi^i)^{2\theta-1},
\end{align*}
 we have
\begin{align*}
|I_5^i|\leq& C\theta\|(\varphi^i)^\theta\pa_y\om^i_{bc,T}\|_{X^{\f{11}{6}+\f{\theta}3}_i}\|(\varphi^i)^{\theta-1}\om^i_{bc,T}\|_{X^{\f{11}{6}+\f{\theta}3}_i}\\
\leq&\f1{10}\|(\varphi^i)^\theta\pa_y\om^i_{bc,T}\|_{X^{\f{11}{6}+\f{\theta}3}_i}^2+C\theta^2\|(\varphi^i)^{\theta-1}\om^i_{bc,T}\|_{X^{\f{11}{6}+\f{\theta}3}_i}^2.
\end{align*}

\underline{Estimate of $I_6^i.$}
It follows from
\begin{align*}
\|\D^{2+\f{\theta}3}(\om^i_{bc,T})_\Phi(\varphi^i)^{\theta-\f12}\|_{L^2_y(I_i; L^2_x)}\leq&\|\D^{\f{13}{6}+\f{\theta}3}(\om^i_{bc,T})_\Phi(\varphi^i)^{\theta}\|_{L^2_y(I_i; L^2_x)}^\f12\|\D^{\f{11}{6}+\f{\theta}3}(\om^i_{bc,T})_\Phi(\varphi^i)^{\theta-1}\|_{L^2_y(I_i; L^2_x)}^\f12\\
\leq&\|(\varphi^i)^{\theta}\om^i_{bc,T}\|_{X^{\f{13}{6}+\f{\theta}3}_i}^\f12\|(\varphi^i)^{\theta-1}\om^i_{bc,T}\|_{X^{\f{11}{6}+\f{\theta}3}_i}^\f12,
\end{align*}
for $\theta=1,2$ and 
\begin{align*}
\|\D^{2}(\om^i_{bc,T})_\Phi(\varphi^i)^{-\f12}\|_{L^2_y(I_i; L^2_x)}\leq&\|\D^{\f{13}{6}}(\om^i_{bc,T})_\Phi\|_{L^2_y(I_i; L^2_x)}^\f12\|\D^{\f{11}{6}}(\om^i_{bc,T})_\Phi(\varphi^i)^{-1}\|_{L^2_y(I_i; L^2_x)}^\f12\\
\leq& C\|\om^i_{bc,T}\|_{X^{\f{13}{6}}_i}^\f12 \|\pa_y\om^i_{bc,T}\|_{X^{\f{11}{6}}_i}^\f12,
\end{align*}
by using Hardy inequality for $\theta=0.$ Therefore, we get for $\theta=0,1,2$ that
\begin{align*}
|I_6^i|\leq&C\|(\varphi^i)^{\f12+\theta}H^i\|_{X^{\f{5}{3}+\f{\theta}{3}}_i}\times \|(\varphi^i)^{\theta}\om^i_{bc, T}\|_{X^{\f{13}{6}+\f{\theta}3}_i}^\f12
\times
\left\{
\begin{aligned}
\|(\varphi^i)^{\theta-1}\om^i_{bc,T}\|_{X^{\f{11}{6}+\f{\theta}3}_i}^\f12,\quad \theta=1,2,\\
 \|\pa_y\om^i_{bc,T}\|_{X^{\f{11}{6}}_i}^\f12,\quad \theta=0,
\end{aligned}
\right.
\\
\leq&\f1{10}\|\pa_y\om^i_{bc,T}\|_{X^{\f{11}{6}}_i}^2+\f{\la}{4}\Big(\|(\varphi^i)^\theta\om^i_{bc,T}\|_{X^{\f{13}{6}+\f{\theta}3}_i}^2+\f{\theta^2} 2\|(\varphi^i)^{\theta-1}\om^i_{bc,T}\|_{X^{\f{11}{6}+\f{\theta}3}_i}^2\Big)+\f{C}{\la^\f12}\|(\varphi^i)^{\f12+\theta}H^i\|_{X^{\f{5}{3}+\f{\theta}{3}}_i}^2.
\end{align*}

Putting $I_1^i-I_6^i$ together, we have
\begin{align*}
|I_1^i|+\cdots+|I_6^i|\leq&\f1{2}\|(\varphi^i)^\theta\pa_y\om^i_{bc,T}\|_{X^{\f{11}{6}+\f{\theta}3}_i}^2+(C+\f{\la}{4})\|(\varphi^i)^\theta\om^i_{bc,T}\|_{X^{\f{13}{6}+\f{\theta}3}_i}^2+\f{\la}{8}\theta^2\|(\varphi^i)^{\theta-1}\om^i_{bc,T}\|_{X^{\f{11}{6}+\f{\theta}3}_i}^2\\
&+\f{C}{\la^\f12}\|(\varphi^i)^{\f12+\theta}H^i\|_{X^{\f{5}{3}+\f{\theta}{3}}_i}^2.
\end{align*}
Then we insert them into \eqref{est: om^i-transport} and take $\la$ large enough to obtain
\begin{align*}
\|(\varphi^i)^\theta\om^i_{bc,T}(t)\|_{X^{\f{11}{6}+\f{\theta}3}}^2+&\f32\la\int_0^t\|(\varphi^i)^\theta\om^i_{bc,T}\|_{X^{\f{13}{6}+\f{\theta}3}}^2ds+\int_0^t\|(\varphi^i)^\theta\na_\e\om^i_{bc,T}\|_{X^{\f{11}{6}+\f{\theta}3}}^2ds\\
\leq&\f{C}{\la^\f12}\int_0^t\|(\varphi^i)^{\f12+\theta}H^i\|_{X^{\f{5}{3}+\f{\theta}{3}}}^2+\f{\la}{8}\int_0^t\theta^2\|(\varphi^i)^{\theta-1}\om^i_{bc,T}\|_{X^{\f{11}{6}+\f{\theta}3}}^2ds\\
\leq&\f{C}{\la^\f12}\int_0^t|h^i|_{X^{\f{7}{3}}}^2ds+\f{\la}{8}\int_0^t\theta^2\|(\varphi^i)^{\theta-1}\om^i_{bc,T}\|_{X^{\f{11}{6}+\f{\theta}3}}^2ds,
\end{align*}
where we use \eqref{est: H^i} in the last step.

All we left is to control the last term of the above inequality. For that, we rewrite it as following:
\beno
\f{\la}{8}\int_0^t\theta^2\|(\varphi^i)^{\theta-1}\om^i_{bc,T}\|_{X^{\f{11}{6}+\f{\theta}3}}^2ds &\leq& \f{\la}{2} \sum_{\theta=1}^2\int_0^t \|(\varphi^i)^{\theta-1}\om^i_{bc,T}\|_{X^{\f{13}{6}+\f{\theta-1}3}}^2ds\\
&=&\f{\la}{2} \sum_{\theta=0}^1\int_0^t \|(\varphi^i)^{\theta}\om^i_{bc,T}\|_{X^{\f{13}{6}+\f{\theta}3}}^2ds.
\eeno

Combing all the above estimates, we get the desired results. 

\end{proof}

\medskip


Based on estimates of $\om_{bc, T}^i$, we use the elliptic equation to get the estimates of $\phi^i_{bc,T}$.

\begin{corollary}\label{cor: om^i-transport}
There exists $\la_0>1$ and $0<T<\min\{T_p, \f{1}{2\lambda}\}$ such that for all $t\in[0,T]$, $\la\geq \la_0$, it holds that 
 \begin{align}\label{est: na_e phi^i-transport}
\int_0^t\Big(\|\na_\e\phi^i_{bc,T}\|_{X^{\f73+\f16}}^2 + |\pa_y\phi^i_{bc,T}|_{y=0,1}|_{X^{\f73}}^2 +  \|\pa_x\phi^i_{bc,T}\|_{X^\f{5}{3}}^2+|\phi^i_{bc,T}|_{y=1-i}|_{X^\f{8}{3}}^2 \Big)ds\leq &\f{C}{\la } \int_0^t|h^i|_{X^\f73}^2ds.
\end{align}
 \end{corollary}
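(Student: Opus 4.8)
The plan is to derive Corollary~\ref{cor: om^i-transport} from Proposition~\ref{pro: om^i-transport} (with $\theta=0$) purely by elliptic regularity, mirroring the passage from vorticity estimates to velocity estimates already carried out for $\phi_{slip}$ and for $\phi^i_{bc,S}$. Recall that $\Delta_\e\phi^i_{bc,T}=\om^i_{bc,T}$ with the homogeneous Dirichlet boundary condition $\phi^i_{bc,T}|_{y=0,1}=0$. First I would invoke the basic elliptic bound $\|\na_\e\phi^i_{bc,T}\|_{X^r}\le C\|\varphi\,\om^i_{bc,T}\|_{X^r}$ from \eqref{est: na_e phi} (the Hardy inequality absorbs the weight $\varphi$), so that the $\theta=0$ case of Proposition~\ref{pro: om^i-transport}, namely $\int_0^t\|\om^i_{bc,T}\|_{X^{13/6}_i}^2ds\le C\la^{-1}\int_0^t|h^i|_{X^{7/3}}^2ds$ together with $\int_0^t\|\varphi\,\om^i_{bc,T}\|_{X^{13/6}_i}^2ds$ controlled by the $\theta=1$ case, yields $\int_0^t\|\na_\e\phi^i_{bc,T}\|_{X^{7/3+1/6}}^2ds\le C\la^{-1}\int_0^t|h^i|_{X^{7/3}}^2ds$. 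The extra $1/6$ tangential derivative is exactly the gain recorded in Proposition~\ref{pro: om^i-transport}.

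Next I would treat the boundary traces. For $|\pa_y\phi^i_{bc,T}|_{y=0,1}|_{X^{7/3}}$ and $|\phi^i_{bc,T}|_{y=1-i}|_{X^{8/3}}$ I would use the trace/Gagliardo--Nirenberg inequality $|g|_{y=y_0}|_{X^r}\le C\|g\|_{X^r}^{1/2}(\|g\|_{X^r}^{1/2}+\|\pa_y g\|_{X^r}^{1/2})$ (the same device used at the end of the proof of Proposition~\ref{pro: om-good-1}), applied to $g=\na_\e\phi^i_{bc,T}$. Combined with the elliptic estimate for $\pa_y\na_\e\phi^i_{bc,T}$ in terms of $\om^i_{bc,T}$ and $\na_\e\om^i_{bc,T}$ — and here the weighted bounds $\|\varphi^i\na_\e\om^i_{bc,T}\|$, $\|(\varphi^i)^2\pa_y\om^i_{bc,T}\|$ controlled by the $\theta=1,2$ cases of Proposition~\ref{pro: om^i-transport} are what make the full gradient of $\om^i_{bc,T}$ admissible near the boundary — the traces are dominated by $C\la^{-1/2}\int_0^t|h^i|^2_{X^{7/3}}$, and since only the half-power in $\la$ is needed while Proposition~\ref{pro: om^i-transport} actually gives $\la^{-1/2}$ times an extra $\la$-factor from the time integral of the $\la$-weighted norm, one recovers the claimed $\la^{-1}$. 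For the term $\|\pa_x\phi^i_{bc,T}\|_{X^{5/3}}$ I would note that $\pa_x\phi^i_{bc,T}=-v^R$-type quantity solving $\Delta_\e(\pa_x\phi^i_{bc,T})=\pa_x\om^i_{bc,T}$ with zero Dirichlet data, so $\|\pa_x\phi^i_{bc,T}\|_{X^{5/3}}\le C\|\varphi\,\pa_x\om^i_{bc,T}\|_{X^{5/3}}\le C\|\varphi\,\om^i_{bc,T}\|_{X^{8/3}}$, which at index $8/3=13/6+1/2$ is below the regularity $13/6$ available from the $\theta=1$ estimate only if one spends the weight; more cleanly, $5/3<13/6$ so even the unweighted $\theta=0$ bound suffices after one integration by parts moving $\pa_x$ onto the test side.

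I expect the main obstacle to be bookkeeping the interplay between the powers of the weight $\varphi^i$, the Sobolev index, and the power of $\la$: Proposition~\ref{pro: om^i-transport} trades one power of $\varphi^i$ for one-third of a tangential derivative and half a power of $\la^{-1}$ somewhat implicitly (through the $\la$-weighted term on the left-hand side), so one must be careful that, after applying the Hardy and trace inequalities, the net weight used never exceeds what is available (at most $(\varphi^i)^2$ for the second-order boundary estimates) and that the final $\la$-power is $\la^{-1}$ rather than $\la^{-1/2}$. A secondary technical point is that $\phi^i_{bc,T}$ lives on the half-line $I_i$, not on $(0,1)$, so the $X^r_i$ norms must be used and Hardy's inequality must be applied in the form valid on a half-line with the weight $\varphi^i=y$ (resp. $1-y$) vanishing only at the finite endpoint; since $\om^i_{bc,T}$ inherits exponential decay at infinity from $\om^i_{bc,S}$ through $H^i$ and the transport-diffusion equation, integrability at infinity is not an issue, but it should be remarked. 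Once these are in place, the corollary follows by collecting the four estimates and summing over $\theta\in\{0,1,2\}$ as needed.
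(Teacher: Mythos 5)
Your handling of the first two terms in \eqref{est: na_e phi^i-transport} tracks the paper's argument: the Hardy-type elliptic bound $\|\na_\e\phi^i_{bc,T}\|_{X^r_i}\le C\|\varphi^i\om^i_{bc,T}\|_{X^r_i}$ combined with the $\theta=1$ case of Proposition~\ref{pro: om^i-transport} gives $\int_0^t\|\na_\e\phi^i_{bc,T}\|_{X^{5/2}}^2\,ds\le C\la^{-3/2}\int_0^t|h^i|^2_{X^{7/3}}\,ds$, and the trace $|\pa_y\phi^i_{bc,T}|_{y=0,1}|_{X^{7/3}}$ follows by interpolation between $\|\pa_y\phi^i_{bc,T}\|_{X^{5/2}_i}$ and $\|\pa_y^2\phi^i_{bc,T}\|_{X^{13/6}_i}$ (the latter by Calderon--Zygmund from $\|\om^i_{bc,T}\|_{X^{13/6}_i}$, the $\theta=0$ case). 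That is essentially what the paper does.

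The last two terms, however, are where your proposal breaks. For $\|\pa_x\phi^i_{bc,T}\|_{X^{5/3}}$ you reduce, via $\Delta_\e(\pa_x\phi^i_{bc,T})=\pa_x\om^i_{bc,T}$ and Hardy, to $\|\varphi^i\om^i_{bc,T}\|_{X^{8/3}}$. But $8/3=16/6$ exceeds $5/2=15/6$, which is the highest regularity available for $\varphi^i\om^i_{bc,T}$ from Proposition~\ref{pro: om^i-transport} ($\theta=1$); the $\theta=2$ case gives $X^{17/6}$ only with the stronger weight $(\varphi^i)^2$. Your fallback --- ``even the unweighted $\theta=0$ bound suffices after one integration by parts moving $\pa_x$ onto the test side'' --- is not an argument: there is no energy pairing here, only an elliptic inversion, so there is nothing to ``test.'' And you never address $|\phi^i_{bc,T}|_{y=1-i}|_{X^{8/3}}$ at all, which is the estimate actually needed downstream (it feeds into $f^i$, the inhomogeneous Dirichlet data for $\phi^i_{bc,R}$). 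What the paper does instead --- and what you are missing --- is to solve $\Delta_\e\widehat\phi^0_{bc,T}=\widehat\om^0_{bc,T}$ explicitly on the half-line, giving the representation
\begin{align*}
\widehat{\phi}^0_{bc,T}(k,y)=\int_0^y e^{-\e|k|(y-y')}\int_{y'}^{+\infty}e^{-\e|k|(y''-y')}\widehat{\om}^0_{bc,T}(k, y'')\,d y''\, dy',
\end{align*}
and then split the outer $y'$-integral at the frequency-dependent scale $\k^{-1/3}$ to obtain
\begin{align*}
\sup_{y\geq 0}\|\phi^0_{bc,T}(\cdot, y)\|_{L^2_x}\leq C\big(\|\D^{-\f16}y\,\om^0_{bc,T}\|_{L^2_y(I_0;L^2_x)}+\|\D^{\f16}y^2\,\om^0_{bc,T}\|_{L^2_y(I_0;L^2_x)}\big).
\end{align*}
This $L^\infty_y$ bound controls both $\|\pa_x\phi^i_{bc,T}\|_{X^{5/3}}$ and $|\phi^i_{bc,T}|_{y=1-i}|_{X^{8/3}}$ by $\|\varphi^i\om^i_{bc,T}\|_{X^{5/2}_i}+\|(\varphi^i)^2\om^i_{bc,T}\|_{X^{17/6}_i}$, which is exactly the $\theta=1,2$ output of Proposition~\ref{pro: om^i-transport}, indices and weights matched. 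The $\f16$ gain coming from the $\k^{-1/3}$ cutoff is the crucial mechanism and is absent from your plan; without it you need one sixth of a tangential derivative more than the weighted estimates can deliver.
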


\begin{proof}
 
Here, we only give the proof of the case $i=0$. The case $i=1$ is the same.

Recalling the elliptic equation 
\begin{align}\label{eq: phi^0, y>0}
\Delta_\e\phi^0_{bc,T}=\om^0_{bc,T},\quad \phi^0_{bc,T}|_{y=0}=0, 
\end{align}
for $y>0$. Then we take $X^{r}_0$ inner product with $\phi^0_{bc,T}$ and using Hardy inequality to get
\begin{align*}
\|\na_\e\phi^0_{bc,T}\|_{X^{r}_0}^2\leq \|\varphi^0\om^0_{bc,T}\|_{X^{r}_0}\|\f{\phi^0_{bc,T}}{\varphi^0}\|_{X^{r}_0}\leq C\|\varphi^0\om^0_{bc,T}\|_{X^{r}_0}\|\pa_y\phi^0_{bc,T}\|_{X^{r}_0},
\end{align*}
which implies
\begin{align}\label{est: elliptic-2}
\|\na_\e\phi^0_{bc,T}\|_{X^{r}_0}\leq C\|\varphi^0\om^0_{bc,T}\|_{X^{r}_0},
\end{align}
for $r\geq0.$
By Proposition \ref{pro: om^i-transport}, we get
\begin{align}\label{est: na_e phi^i-transport-1}
\int_0^t\|\na_\e\phi^0_{bc,T}\|_{X^{\f52}_0}^2ds\leq&\int_0^t\|\varphi^0\om^0_{bc,T}\|_{X^{\f52}_0}^2ds\leq \f{C}{\la^\f32}\int_0^t|h^0|_{X^\f73}^2ds.
\end{align}

For the boundary term, using the interpolation inequality to get
\begin{align*}
|\pa_y\phi^0_{bc,T}|_{y=0,1}|_{X^{\f73}}\leq &C\|\pa_y\phi^0_{bc,T}\|_{X^{\f52}_0}^\f12\|\pa_y^2\phi^0_{bc,T}\|_{X^{\f{13}6}_0}^\f12\\
\leq&C\|\varphi^0\om^0_{bc,T}\|_{X^{\f52}_0}^\f12\|\om^0_{bc,T}\|_{X^{\f{13}6}_0}^\f12,
\end{align*}
where we use \eqref{est: elliptic-2} and Calderon-Zygmund inequality in the last step. Along with Proposition \ref{pro: om^i-transport}, we arrive at
\begin{align}\label{est: na_e phi^i-transport-2}
\int_0^t|\pa_y\phi^i_{bc,T}|_{y=0,1}|_{X^{\f73}}^2ds\leq&\f{C}{\la}\int_0^t|h^i|_{X^\f73}^2ds.
\end{align}

Next, we deal with the term $\|\pa_x\phi^0_{bc,T}\|_{X^\f{5}{3}}$.  Taking Fourier transform in $x$ to \eqref{eq: phi^0, y>0}, we write the solution
\begin{align}
\widehat{\phi}^0_{bc,T}(k,y)=\int_0^y e^{-\e|k|(y-y')}\int_{y'}^{+\infty}e^{-\e|k|(y''-y')}\widehat{\om}^0_{bc,T}(k, y'')d y'' dy'.
\end{align}
Then we have
\begin{align*}
|\widehat{\phi}^0_{bc,T}(k,y)|\leq &\int_0^y\int_{y'}^{+\infty}|\widehat{\om}^0_{bc,T}(k, y'')|d y'' dy'.
\end{align*}
Decomposing the integral $\int_0^y$ into $\int_0^{\min\{y,\k^{-\f13}\}}$ and $\int_{\min\{y,\k^{-\f13}\}}^y$, it follows from the H\"older inequality that
\begin{align*} 
\sup_{y\geq 0}|\widehat{\phi}^0_{bc,T}(k,y)|\leq& C\k^{-\f16}\|y\widehat{\om}^0_{bc,T}\|_{L^2_y(I_0)}+C\k^{\f16}\|y^2\widehat{\om}^0_{bc,T}\|_{L^2_y(I_0)}.
\end{align*}
We take summation $\sum_{k\in\mathbb{Z}}$ and use the Plancherel theorem to deduce 
\begin{align}\label{est: pointwise-phi^0}
\sup_{y\geq 0}\|\phi^0_{bc,T}(\cdot, y)\|_{L^2_x}\leq C\|\D^{-\f16}y\om^0_{bc,T}\|_{L^2_y(I_0; L^2_x)}+\|\D^{\f16}y^2\om^0_{bc,T}\|_{L^2_y(I_0; L^2_x)}.
\end{align}
Thus, we get that
\begin{align}\label{est: na_e phi^i-transport-3}
\int_0^t\Big(\|\pa_x\phi^i_{bc,T}\|_{X^\f{5}{3}}^2+|\phi^i_{bc,T}|_{y=1-i}|_{X^\f{8}{3}}^2\Big) ds\leq& C\int_0^t\|\varphi^i\om^i_{bc,T}\|_{X^{\f52}_i}^2ds+C\int_0^t\|(\varphi^i)^2\om^i_{bc,T}\|_{X^{\f{17}{6}}_i}^2ds\\
\nonumber
\leq&\f{C}{\la}\int_0^t|h^i|_{X^{\f73}}^2ds,
\end{align}

Collecting \eqref{est: na_e phi^i-transport-1}, \eqref{est: na_e phi^i-transport-2} and \eqref{est: na_e phi^i-transport-3} together, we get the corollary proved.

\end{proof}

\subsection{The estimates of $\phi_{bc, R}$: Full construction of boundary corrector}

All we left is the term $\phi_{bc, R}$. Like previous argument, we define
\beno
\phi_{bc, R}=\phi^0_{bc, R}+\phi^1_{bc, R},
\eeno
where $\phi^i_{bc, R}$ satisfies that
\begin{align}\label{eq: phi_2^0}
\left\{
\begin{aligned}
&(\pa_t-\Delta_\e)\Delta_\e \phi_{bc, R}^0+u^p\pa_x\Delta_\e \phi_{bc, R}^0+v^p\pa_y\Delta_\e \phi_{bc, R}^0+\pa_y\phi_{bc, R}^0\pa_x\om^p-\pa_x\phi_{bc, R}^0\pa_y\om^p\\
&\qquad\qquad\qquad\qquad=-\pa_y(\phi^0_{bc, S}+\phi^0_{bc, T})\pa_x\om^p+\pa_x(\phi_{bc, S}^0+\phi_{bc, T}^0)\pa_y\om^p,\quad t>0,~x\in\mathbb{T},~y\in(0,1),\\
&\qquad\qquad\qquad\qquad\eqdef G^0,\\
&\phi_{bc, R}^0|_{y=0}=0,\quad
\phi_{bc, R}^0|_{y=1}=-(\phi^0_{bc, S}+\phi^0_{bc, T})|_{y=1},\quad \Delta_\e\phi_{bc, R}^0|_{y=0,1}=0,\\
&\phi_{bc, R}^0|_{t=0}=0.
\end{aligned}
\right.
\end{align}
and 
\begin{align}\label{eq: phi_2^1}
\left\{
\begin{aligned}
&(\pa_t \Delta_\e)\Delta_\e \phi_{bc, R}^1+u^p\pa_x\Delta_\e \phi_{bc, R}^1+v^p\pa_y\Delta_\e \phi_{bc, R}^1+\pa_y\phi_{bc, R}^1\pa_x\om^p-\pa_x\phi_{bc, R}^1\pa_y\om^p\\
&\qquad\qquad\qquad\qquad=-\pa_y(\phi_{bc,S}^1+\phi_{bc,T}^1)\pa_x\om^p+\pa_x(\phi_{bc,S}^1+\phi_{bc,T}^1)\pa_y\om^p,\quad t>0,~x\in\mathbb{T},~y\in(0,1),\\
&\qquad\qquad\qquad\qquad\eqdef G^1,\\
&\phi_{bc, R}^1|_{y=0}=-(\phi^1_{bc, S}+\phi^1_{bc, T})|_{y=0},\quad
\phi_{bc, R}^1|_{y=1}=0,\quad \Delta_\e\phi_{bc, R}^1|_{y=0,1}=0,\\
&\phi_{bc, R}^1|_{t=0}=0.
\end{aligned}
\right.
\end{align}

For simplicity, denote $\om_{bc,R}^i=\Delta_\e \phi^i_{bc,R}$ who has the following relationship
\begin{align}\label{eq: tri_e phi_2^0=om_2^0}
\left\{
\begin{aligned}
&\Delta_\e \phi_{bc,R}^0=\om_{bc,R}^0,\\
&\phi_{bc,R}^0|_{y=0}=0,\quad
\phi_{bc,R}^0|_{y=1}=f^0,
\end{aligned}
\right.
\end{align}
and
\begin{align}\label{eq: tri_e phi_2^1=om_2^1}
\left\{
\begin{aligned}
&\Delta_\e \phi_{bc,R}^1=\om_{bc,R}^1,\\
&\phi_{bc,R}^1|_{y=0}=f^1,\quad
\phi_{bc,R}^1|_{y=1}=0,
\end{aligned}
\right.
\end{align}
where
\begin{align}
f^0=f^0(t,x)=&-(\phi^0_{bc,S}+\phi^0_{bc,T})|_{y=1},\label{def: f^0}\\
f^1=f^1(t,x)=&-(\phi^1_{bc,S}+\phi^1_{bc,R})|_{y=0}.\label{def: f^1}
\end{align}

In order to  homogenize boundary condition, we introduce
\begin{align}
\widetilde{\phi}_{bc,R}^0=\phi_{bc,R}^0+g^0,\quad g^0=y (\phi^0_{bc,S}+\phi^0_{bc,T}).\label{def: g^0}
\end{align}
where $\widetilde{\phi}_{bc,R}^0$ satisfies
 \begin{align}\label{eq: tilde phi_2^0}
 \left\{
 \begin{aligned}
 &\Delta_\e \widetilde{\phi}_{bc,R}^0=\om_{bc,R}^0+\Delta_\e g^0,\\
&  \widetilde{\phi}_{bc,R}^0|_{y=0,1}=0,
\end{aligned}
\right.
 \end{align}
 where 
\begin{align}
\Delta_\e g^0=y (\Delta_\e\phi^0_{bc,S}+\Delta_\e\phi^0_{bc,T})+2(\pa_y\phi^0_{bc,S}+\pa_y\phi^0_{bc,T}).\label{def: tri_e g^0}
\end{align}

Similarly, we introduce
\begin{align}
\widetilde{\phi}_{bc,R}^1=\phi_{bc,R}^1+g^1,\quad g^1=(1-y) (\phi^1_{bc,S}+\phi^1_{bc,T}).\label{def: g^1}
\end{align}
 and $\widetilde{\phi}_2^1$ satisfies
 \begin{align}\label{eq: tilde phi_2^1}
 \left\{
 \begin{aligned}
 &\Delta_\e \widetilde{\phi}_{bc,R}^1=\om_{bc,R}^1+\Delta_\e g^1,\\
&\widetilde{\phi}_{bc,R}^1|_{y=0,1}=0,
\end{aligned}
\right.
 \end{align}
 where 
\begin{align}
\Delta_\e g^1=(1-y)(\Delta_\e\phi^1_{bc,S}+\Delta_\e\phi^1_{bc,T})-2(\pa_y\phi^1_{bc,S}+\pa_y \phi^1_{bc,T}).\label{def: tri_e g^1}
\end{align}

\medskip

Firstly, we give some elliptic estimates.
\begin{lemma}\label{lem: na_e phi_2}
Let $(f^0, f^1),$ $(g^0, g^1)$ introduced in \eqref{def: f^0}-\eqref{def: f^1}, \eqref{def: g^0} and \eqref{def: g^1}. It holds that
\begin{align}\label{est: h-g}
\int_0^t\Big(|f^i|_{X^\f83}^2+ \|\na_\e g^i\|_{X^\f52}^2+\|\Delta_\e g^i\|_{X^\f52}^2\Big)ds\leq& \f{C}{\la^{\f12}}\int_0^t|h^{i}|_{X^{\f73}}^2ds  
\end{align}
for $i=0,1.$

Moreover,  $\phi_{bc,R}^i~(i=0,1)$  has the following estimates:
\begin{align}\label{est: na_e phi_2}
\int_0^t\|\na_\e\phi_{bc,R}^i\|_{X^\f73}^2ds\leq&\f{C}{\la^\f12}\int_0^t|h^i|_{X^{\f73}}^2ds+C\int_0^t\|\om_{bc,R}^i\|_{X^\f73}^2ds.
\end{align}
\end{lemma}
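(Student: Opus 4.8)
\textbf{Proof proposal for Lemma~\ref{lem: na_e phi_2}.}
The plan is to prove the two assertions of the lemma in sequence, using only the Stokes estimates from Section~8.1 (Proposition~\ref{pro: na_e phi}, Lemma~\ref{lem: phi^i|_y=1-i}, Proposition~\ref{pro: Estimate for om^i-transport}) and the transport estimates from Section~8.2 (Proposition~\ref{pro: om^i-transport}, Corollary~\ref{cor: om^i-transport}), together with the definitions \eqref{def: f^0}--\eqref{def: tri_e g^1} and elementary elliptic/Hardy inequalities. I will treat $i=0$ throughout; the case $i=1$ is symmetric.

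First I would establish \eqref{est: h-g}. The boundary term $f^0=-(\phi^0_{bc,S}+\phi^0_{bc,T})|_{y=1}$ is controlled directly: the contribution of $\phi^0_{bc,S}|_{y=1}$ in $X^{8/3}$ is bounded by $\tfrac{C}{\la}\int_0^t|h^0|_{X^{7/3}}^2\,ds$ via Lemma~\ref{lem: phi^i|_y=1-i} (taking $M=0$, $r=\tfrac73$), and the contribution of $\phi^0_{bc,T}|_{y=1}$ is bounded by $\tfrac{C}{\la}\int_0^t|h^0|_{X^{7/3}}^2\,ds$ via the $|\phi^i_{bc,T}|_{y=1-i}|_{X^{8/3}}$ term in Corollary~\ref{cor: om^i-transport}. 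For $\nabla_\e g^0$ and $\Delta_\e g^0$, I expand using \eqref{def: g^0}, \eqref{def: tri_e g^0}: the cut-off $y$ on the half-line is comparable to $\varphi^0$ near $y=0$ and merely bounded far away, but since everything below already carries enough powers of $\varphi^0$ and decays exponentially, I bound $\|\nabla_\e g^0\|_{X^{5/2}}$ and $\|\Delta_\e g^0\|_{X^{5/2}}$ by $\|\nabla_\e\phi^0_{bc,S}\|_{X^{5/2}_0}+\|\nabla_\e\phi^0_{bc,T}\|_{X^{5/2}_0}+\|\varphi^0\Delta_\e\phi^0_{bc,S}\|_{X^{5/2}_0}+\|(\varphi^0)\Delta_\e\phi^0_{bc,T}\|_{X^{5/2}_0}+\|\pa_y\phi^0_{bc,S}\|_{X^{5/2}_0}+\|\pa_y\phi^0_{bc,T}\|_{X^{5/2}_0}$ (paying attention that $y\Delta_\e\phi^0_{bc,S}$ is $\varphi^0\Delta_\e\phi^0_{bc,S}$ up to the exponential tail, and similarly for the transport piece). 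Each of these is controlled by $\tfrac{C}{\la^{1/2}}\int_0^t|h^0|_{X^{7/3}}^2\,ds$: the Stokes terms by Proposition~\ref{pro: na_e phi} and \eqref{est: om^i-1} of Proposition~\ref{pro: Estimate for om^i-transport}, the transport terms by Proposition~\ref{pro: om^i-transport} (with $\theta=0,1$) and \eqref{est: na_e phi^i-transport-1}. Since $\tfrac52<\tfrac73+\tfrac16$, the indices match.

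Next I would prove \eqref{est: na_e phi_2}. Using the homogenization \eqref{def: g^0}, $\widetilde{\phi}^0_{bc,R}=\phi^0_{bc,R}+g^0$ solves \eqref{eq: tilde phi_2^0} with zero Dirichlet data, so by the elliptic estimate \eqref{est: na_e phi} (or its proof, i.e.\ testing against $\widetilde\phi^0_{bc,R}$ and using Hardy) one gets $\|\nabla_\e\widetilde\phi^0_{bc,R}\|_{X^{7/3}}\le C\|\varphi(\om^0_{bc,R}+\Delta_\e g^0)\|_{X^{7/3}}\le C(\|\om^0_{bc,R}\|_{X^{7/3}}+\|\Delta_\e g^0\|_{X^{7/3}})$. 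Then $\nabla_\e\phi^0_{bc,R}=\nabla_\e\widetilde\phi^0_{bc,R}-\nabla_\e g^0$, so $\|\nabla_\e\phi^0_{bc,R}\|_{X^{7/3}}\le C\|\om^0_{bc,R}\|_{X^{7/3}}+C\|\Delta_\e g^0\|_{X^{7/3}}+\|\nabla_\e g^0\|_{X^{7/3}}$; integrating in time and invoking \eqref{est: h-g} (note $\tfrac73<\tfrac52$, so the $X^{5/2}$ bounds dominate the $X^{7/3}$ ones) yields the claimed inequality.

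The main obstacle I anticipate is the bookkeeping in \eqref{est: h-g}: one must verify that replacing the genuine cut-off weights $y$ and $1-y$ (which on the half-lines $I_0,I_1$ do \emph{not} vanish at the far end) by $\varphi^0,\varphi^1$ is harmless. This works precisely because the Stokes and transport profiles decay like $e^{-\Re(\gamma)\varphi^i}$ (Proposition~\ref{pro: om^i}) or are themselves weighted by positive powers of $\varphi^i$ (Proposition~\ref{pro: om^i-transport}), so the "bad" region $\varphi^i\gg1$ contributes only exponentially small terms; care is needed to match the Gevrey index $\tfrac52$ against the available $\tfrac73+\tfrac16=\tfrac52+\tfrac{something}$ regularity and to track the powers of $\la$, but no genuinely new estimate beyond Section~8.1--8.2 is required.
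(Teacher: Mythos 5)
Your proposal is correct and follows essentially the same route as the paper: control $f^0$ by Lemma~\ref{lem: phi^i|_y=1-i} and Corollary~\ref{cor: om^i-transport}, expand $\na_\e g^0$ and $\Delta_\e g^0$ and bound each piece by the Stokes/transport estimates, then homogenize and test \eqref{eq: tilde phi_2^0} against $\widetilde\phi^0_{bc,R}$ with Poincar\'e/Hardy. Two small remarks: your worry about replacing $y$ by $\varphi^0$ is vacuous (by definition $\varphi^0(y)=y$ identically, so there is nothing to compare), and in the $\na_\e g^0$ bound you silently need a Poincar\'e step $\|\phi^0_{bc,S}\|_{L^2_y(0,1)}\le C\|\pa_y\phi^0_{bc,S}\|_{L^2_y(0,1)}$ to absorb the $\phi$-term coming from $\pa_y(y\phi)$, which you should state (the paper instead controls $\|\phi^0_{bc,S}\|_{X^{8/3}}$ via the $\pa_x$-estimate, but both give the same bound); also $\f73+\f16=\f52$, not $>\f52$, though that equality is all you need.
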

\begin{proof}
Here we only prove the case $i=0.$ The case $i=1$ is almost the same and we omit details to readers. 
We first give proof for $f^0$.  
By the definition of $f^0$, we get
\beno
\int_0^t|f^0|_{X^\f83}^2ds\leq \int_0^t|\phi^0_{bc,S}|_{y=1}|_{X^\f83}^2 ds+ \int_0^t|\phi^0_{bc,T}|_{y=1}|_{X^\f83}^2 ds \leq \f{C}{\la}\int_0^t|h^0|_{X^{\f73}}^2ds.
\eeno
where we used Lemma \ref{lem: phi^i|_y=1-i} and Corollary \ref{cor: om^i-transport}.

For $g^0$, by Corollary \ref{cor: om^i-transport}, Proposition \ref{pro: na_e phi},  we have
\beno
\int_0^t\|\na_\e g^0\|_{X^\f52}^2ds &\leq& \int_0^t\Big( \|\na_\e\phi_{bc,S}^0\|_{X^\f52} +  \|\na_\e\phi_{bc,T}^0\|_{X^\f52} +   \|\phi_{bc,S}^0\|_{X^\f83} + \|\phi_{bc,T}^0\|_{X^\f83}\Big)ds\\
&\leq &\f{C}{\la^\f12}\int_0^t|h^0|_{X^\f73}^2ds.
\eeno

 On one hand, 
using Proposition \ref{pro: na_e phi}, Proposition \ref{pro: om^i}, Corollary \ref{cor: om^i-transport}  and  Proposition \ref{pro: om^i-transport}, we get
\begin{align*}
&\int_0^t\Big(\|y\Delta_\e\phi^0_{bc,S}\|_{X^\f52}^2+\|\pa_y\phi^0_{bc,S}\|_{X^\f52}^2 +\|y\Delta_\e\phi^0_{bc,T}\|_{X^\f52}^2+\|\pa_y\phi^0_{bc,T}\|_{X^\f52}^2\Big)ds\leq \f{C}{\la^\f12}\int_0^t|h^0|_{X^\f73}^2ds,
\end{align*}
which implies that 
\begin{align*}
\int_0^t \|\Delta_\e g^0\|_{X^\f52}^2ds\leq \f{C}{\la^\f12}\int_0^t|h^0|_{X^\f73}^2ds.
\end{align*}

At last, we prove \eqref{est: na_e phi_2}.
Taking $X^{\f73}$ inner product with $\widetilde{\phi}_{bc,R}^0$ toward \eqref{eq: tilde phi_2^0}, we use integration by parts and then integrate time from $0$ to $t$ that
\begin{align*}
\int_0^t\|\na_\e\widetilde{\phi}_{bc,R}^0\|_{X^{\f73}}^2ds=-\int_0^t\langle \om_{bc,R}^0,\widetilde{\phi}_{bc,R}^0\rangle_{X^\f73}ds+\int_0^t\langle \Delta_\e g^0,\widetilde{\phi}_{bc,R}^0\rangle_{X^\f73}ds.
\end{align*}
Due to $ \widetilde{\phi}_{bc,R}^0|_{y=0,1}=0,$ we use Poincar\'e inequality to imply
\begin{align}
\int_0^t\langle \om_{bc,R}^0,\widetilde{\phi}_{bc,R}^0\rangle_{X^\f73}ds\leq&\f1{10}\int_0^t\|\pa_y\widetilde{\phi}_{bc,R}^0\|_{X^\f73}ds+ C\int_0^t\|\om_{bc,R}^0\|_{X^\f73}^2ds.\label{est: na_e phi-1}
\end{align}

According to \eqref{est: h-g}, we get
\begin{align}
\int_0^t\langle \Delta_\e g^0,\widetilde{\phi}_{bc,R}^0\rangle_{X^\f73}ds\leq&\int_0^t\|\Delta_\e g^0\|_{X^\f73}\|\widetilde{\phi}_{bc,R}^0\|_{X^\f73} ds\label{est: na_e phi-2}\\
\nonumber
\leq& \f1{10}\int_0^t\|\pa_y\widetilde{\phi}_{bc,R}^0\|_{X^\f73} ^2 ds+\f{C}{\la^\f12}\int_0^t|h^0|_{X^\f73}^2ds.
\end{align}

Combining \eqref{est: na_e phi-1} and \eqref{est: na_e phi-2}, it deduces
\begin{align*}
\int_0^t\|\na_\e\widetilde{\phi}_{bc,R}^0\|_{X^\f73}^2ds\leq&\f{C}{\la^\f12}\int_0^t|h^0|_{X^{\f73}}^2ds+C\int_0^t\|\om_{bc,R}^0\|_{X^\f73}^2ds.
\end{align*}
Bringing $\na_\e\phi_{bc,R}^0=\na_\e\widetilde{\phi}_{bc,R}^0-\na_\e g^0$ into above inequality, we obtain
\begin{align*}
\int_0^t\|\na_\e\phi_{bc,R}^0\|_{X^\f73}^2ds\leq &\int_0^t\|\na_\e\widetilde{\phi}_{bc,R}^0\|_{X^\f73}^2ds+\int_0^t\|\na_\e g^0\|_{X^\f73}^2ds\\
\leq&\f{C}{\la^\f12}\int_0^t|h^0|_{X^{\f73}}^2ds+C\int_0^t\|\om_{bc,R}^0\|_{X^\f73}^2ds.
\end{align*}

By now, we finish the proof.

\end{proof}

In order to estimate the right hand side of \eqref{eq: phi_2^0} and \eqref{eq: phi_2^1} and boundary term, we need the following results:
\begin{lemma}\label{lem: (pa_x phi_1, pa_y phi_1)}
For $i=0,1,$ we have that
\begin{align}\label{est: pa_x phi_1}
\int_0^t\Big(\|\pa_x(\phi_{bc,S}^i+\phi_{bc,T}^i)\|_{X^\f53}^2+\|\pa_y (\phi_{bc,S}^i+\phi_{bc,T}^i)\|_{X^\f53}^2\Big) ds\leq&\f{C}{\la^\f12}\int_0^t|h^i|_{X^{\f73}}^2ds,
\end{align}
\begin{align}
\int_0^t |\pa_y \phi_{bc,R}^i|_{y=0,1}|_{X^\f73}^2ds\leq& C\int_0^t\|\om_{bc,R}^i\|_{X^\f73}^2ds+\f{C}{\la^\f12}\int_0^t|h^i|_{X^\f73}^2ds,\label{est: BC-pa_y phi_2-1}\\
\int_0^t \Big\langle \pa_x(\phi_{bc,R}^i)_\Phi,\pa_y(\phi_{bc,R}^i)_\Phi \Big\rangle_{H^2_x}\Big|_{y=0}^{y=1}ds\leq&C\int_0^t\|\om_{bc,R}^i\|_{X^\f73}^2ds+\f{C}{\la^\f12}\int_0^t|h^i|_{X^\f73}^2ds.\label{est: BC-pa_y phi_2-2}
\end{align}

\end{lemma}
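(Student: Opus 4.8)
The plan is to treat the three displayed estimates in order, since the later ones feed on the earlier ones. For \eqref{est: pa_x phi_1}, I would split $\phi^i_{bc,S}+\phi^i_{bc,T}$ into its two pieces and estimate each separately. For the $\phi^i_{bc,S}$ part, apply Proposition \ref{pro: na_e phi}: it already gives $\int_0^t\|\pa_x\phi^i_{bc,S}\|_{X^{5/3}}^2\,ds\le C\la^{-1}\int_0^t|\pa_x h^i|_{X^{4/3}}^2\,ds$ and $\int_0^t\|\na_\e\phi^i_{bc,S}\|_{X^{7/3+1/6}_i}^2\,ds\le C\la^{-1/2}\int_0^t|h^i|_{X^{7/3}}^2\,ds$; since $|\pa_xh^i|_{X^{4/3}}\le|h^i|_{X^{7/3}}$ and $\|\cdot\|_{X^{5/3}}\le\|\cdot\|_{X^{7/3+1/6}_i}$, both terms are dominated by $C\la^{-1/2}\int_0^t|h^i|_{X^{7/3}}^2\,ds$. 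For the $\phi^i_{bc,T}$ part, apply Corollary \ref{cor: om^i-transport}, which bounds $\int_0^t\big(\|\na_\e\phi^i_{bc,T}\|_{X^{7/3+1/6}}^2+\|\pa_x\phi^i_{bc,T}\|_{X^{5/3}}^2\big)\,ds$ by $C\la^{-1}\int_0^t|h^i|_{X^{7/3}}^2\,ds$. Summing and using $\|\pa_y(\cdot)\|_{X^{5/3}}\le\|\na_\e(\cdot)\|_{X^{5/3}}\le\|\na_\e(\cdot)\|_{X^{7/3+1/6}_i}$ closes \eqref{est: pa_x phi_1}.

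For \eqref{est: BC-pa_y phi_2-1}, the idea is the classical trace/interpolation route combined with the homogenized elliptic problem \eqref{eq: tilde phi_2^0} (resp. \eqref{eq: tilde phi_2^1}). Write $\pa_y\phi^i_{bc,R}=\pa_y\widetilde\phi^i_{bc,R}-\pa_yg^i$. On the boundary, the Gagliardo–Nirenberg/trace inequality \eqref{equality: GN} gives $|\pa_y\widetilde\phi^i_{bc,R}|_{y=0,1}|_{X^{7/3}}\le C\|\pa_y\widetilde\phi^i_{bc,R}\|_{X^{7/3}}^{1/2}\|\pa_y^2\widetilde\phi^i_{bc,R}\|_{X^{7/3}}^{1/2}$, then bound $\|\pa_y^2\widetilde\phi^i_{bc,R}\|_{X^{7/3}}$ by $\|\Delta_\e\widetilde\phi^i_{bc,R}\|_{X^{7/3}}\le\|\om^i_{bc,R}\|_{X^{7/3}}+\|\Delta_\e g^i\|_{X^{7/3}}$ via Calderón–Zygmund, and $\|\pa_y\widetilde\phi^i_{bc,R}\|_{X^{7/3}}\le\|\na_\e\widetilde\phi^i_{bc,R}\|_{X^{7/3}}$, which by the proof of Lemma \ref{lem: na_e phi_2} is controlled by $C\la^{-1/2}\int_0^t|h^i|_{X^{7/3}}^2\,ds+C\int_0^t\|\om^i_{bc,R}\|_{X^{7/3}}^2\,ds$ after time integration. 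The $\pa_yg^i$ trace is handled directly: $g^i=y(\phi^i_{bc,S}+\phi^i_{bc,T})$ (resp. $(1-y)(\cdots)$), so $\pa_yg^i|_{y=0,1}$ reduces to traces of $\phi^i_{bc,S}+\phi^i_{bc,T}$ and $\pa_y(\phi^i_{bc,S}+\phi^i_{bc,T})$, estimated by Lemma \ref{lem: phi^i|_y=1-i} and Corollary \ref{cor: om^i-transport}, all bounded by $C\la^{-1/2}\int_0^t|h^i|_{X^{7/3}}^2\,ds$. Young's inequality on the product of half-powers then yields \eqref{est: BC-pa_y phi_2-1}.

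For \eqref{est: BC-pa_y phi_2-2}, note that on $y=0$ the Dirichlet condition is $\phi^0_{bc,R}|_{y=0}=0$ (resp. $\phi^1_{bc,R}|_{y=1}=0$), so the contribution there vanishes; at the other endpoint $\phi^0_{bc,R}|_{y=1}=f^0$ (resp. $\phi^1_{bc,R}|_{y=0}=f^1$), so the boundary pairing becomes $\pm\langle\pa_x(f^i)_\Phi,\pa_y\phi^i_{bc,R}|_{y=1-i\,\Phi}\rangle_{H^2_x}$, which by Cauchy–Schwarz is $\le C|f^i|_{X^{3}}\,|\pa_y\phi^i_{bc,R}|_{y=0,1}|_{X^{7/3}}$ (lose a derivative on $f^i$, absorbed by its extra regularity: \eqref{est: h-g} gives $f^i\in X^{8/3}$, ample room). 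Time-integrate, apply Young, use \eqref{est: h-g} to bound $\int_0^t|f^i|_{X^{8/3}}^2\,ds\le C\la^{-1/2}\int_0^t|h^i|_{X^{7/3}}^2\,ds$, and use \eqref{est: BC-pa_y phi_2-1} for the $\pa_y\phi^i_{bc,R}$ trace factor; this produces the stated bound. The main obstacle is the \emph{bookkeeping of regularity indices}: one must verify at each trace step that there is a genuine half-derivative gap (so that \eqref{equality: GN} applies and the $\la^{-1/2}$ smalling factor survives), and that the weights $\varphi^i$ appearing in Propositions \ref{pro: om^i} and \ref{pro: om^i-transport} are exactly compensated by the Hardy inequality used to pass from $\om^i_{bc,R}$ and $\Delta_\e g^i$ back to $\na_\e\widetilde\phi^i_{bc,R}$. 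Everything else is a routine concatenation of the already-established estimates for $\phi^i_{bc,S}$ and $\phi^i_{bc,T}$.
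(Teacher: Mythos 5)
Your overall route matches the paper's: \eqref{est: pa_x phi_1} is indeed a direct combination of Proposition \ref{pro: na_e phi} and Corollary \ref{cor: om^i-transport}; the boundary trace \eqref{est: BC-pa_y phi_2-1} goes through the homogenized function $\widetilde\phi^i_{bc,R}$, the trace interpolation \eqref{equality: GN}, the elliptic/Calder\'on--Zygmund estimate for $\widetilde\phi^i_{bc,R}$, and Lemma \ref{lem: na_e phi_2} for $\Delta_\e g^i$; and for \eqref{est: BC-pa_y phi_2-2} one observes the $y=0$ contribution vanishes and pairs $\pa_x f^i$ against $\pa_y\phi^i_{bc,R}|_{y=1-i}$. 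All of that is the paper's argument, just slightly rearranged.

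There is, however, a concrete numerical slip in your treatment of \eqref{est: BC-pa_y phi_2-2}. You bound the boundary pairing by $C\,|f^i|_{X^{3}}\,|\pa_y\phi^i_{bc,R}|_{y=0,1}|_{X^{7/3}}$ and then assert that \eqref{est: h-g}, which gives $f^i\in X^{8/3}$, provides ``ample room.'' It does not: $8/3 < 3$, so $\int_0^t |f^i|_{X^3}^2\,ds$ is \emph{not} controlled by \eqref{est: h-g}, and as written the estimate does not close. Moreover $3+\tfrac73 = \tfrac{16}{3} > 5$, so the split $X^3\times X^{7/3}$ is not even the sharp Cauchy--Schwarz for the pairing $\langle \pa_x(f^i)_\Phi,\pa_y(\phi^i_{bc,R})_\Phi\rangle_{H^2_x}$, which has total index $4+1=5$. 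The correct split, and the one the paper uses, is $|f^i|_{X^{8/3}}\,|\pa_y\phi^i_{bc,R}|_{y=1-i}|_{X^{7/3}}$; this consumes exactly the available regularity ($8/3+7/3=5$), Young's inequality then combines with \eqref{est: BC-pa_y phi_2-1} and \eqref{est: h-g} to give the stated bound. So replace $X^3$ by $X^{8/3}$ in that step. (A smaller point: your concluding remark about needing a ``half-derivative gap'' in the GN step is misleading — the $\la^{-1/2}$ gain in \eqref{est: BC-pa_y phi_2-1} comes entirely from the $\Delta_\e g^i$ contribution via Lemma \ref{lem: na_e phi_2}, not from the trace interpolation, which produces no power of $\la$.)
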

\begin{proof}
Here we only prove the case $i=0.$ The case $i=1$ is almost the same and we omit details to readers.

By Proposition \ref{pro: na_e phi} and Corollary \ref{cor: om^i-transport}, we get the \eqref{est: pa_x phi_1} proved.

Next, we deal with the boundary term. A direct calculation gives that
\begin{align*}
\pa_y\phi_{bc,R}^0|_{y=1}=&(\pa_y\widetilde{\phi}_{bc,R}^0-\pa_yg^0)|_{y=1}\\
=&\pa_y\widetilde{\phi}_{bc,R}^0|_{y=1}-(\pa_y\phi^0_{bc,S}+\pa_y\phi^0_{bc,T})|_{y=1}-(\phi^0_{bc,S}+\phi^0_{bc,T})|_{y=1},
\end{align*}
and 
\begin{align*}
\pa_y\phi_{bc,R}^0|_{y=0}=(\pa_y\widetilde{\phi}_{bc,R}^0-\pa_yg^0)|_{y=0}=\pa_y\widetilde{\phi}_{bc,R}^0|_{y=0},
\end{align*}
due to $\phi_{bc, S}^0|_{y=0}=\phi_{bc, T}^0|_{y=0}=0.$

By Corollary \ref{cor: om^i-transport}, we get
implies
\begin{align*}
\int_0^t|\pa_y\widetilde{\phi}_{bc, R}^0|_{y=0,1}|_{X^\f73}^2ds 
\leq&\f{C}{\la^\f12}\int_0^t|h^0|_{X^\f73}^2ds+C\int_0^t(\|\om_{bc, R}^0\|_{X^\f73}^2+\|\Delta_\e g^0\|_{X^\f73}^2)ds\\
\leq&C\int_0^t\|\om_{bc, R}^0\|_{X^\f73}^2ds+\f{C}{\la^\f12}\int_0^t|h^0|_{X^\f73}^2ds,
\end{align*}
where we use elliptic estimate and Calderon-Zygmund inequality 
\beno
\|\pa_y\widetilde{\phi}_{bc,R}^0\|_{X^\f73}+\|\pa_y^2\widetilde{\phi}_{bc,R}^0\|_{X^\f73}\leq C\|\om_{bc,R}^0\|_{X^\f73}+C\|\Delta_\e g^0\|_{X^\f73}.
\eeno

%

For the last estimate, we use \eqref{est: BC-pa_y phi_2-1} and  \eqref{est: h-g} to imply
\begin{align*}
\int_0^t \Big\langle \pa_x(\phi_{bc,R}^0)_\Phi,\pa_y(\phi_{bc,R}^0)_\Phi \Big\rangle_{H^2_x}\Big|_{y=0}^{y=1}ds\leq& C\int_0^t|(\phi_{bc,S}^0+\phi_{bc,T}^0)|_{y=1}|_{X^\f83}|\pa_y \phi_{bc,R}^0|_{y=1}|_{X^\f73}ds\\
\leq&C\int_0^t|f^0|_{X^\f83}|\pa_y \phi_{bc,R}^0|_{y=1}|_{X^\f73}ds\\
\leq&C\int_0^t\|\om_{bc,R}^0\|_{X^\f73}^2ds+\f{C}{\la^\f12}\int_0^t|h^0|_{X^\f73}^2ds.
\end{align*}
Here, we complete this lemma.

\end{proof}

We are coming to the main part of this section. We shall give the estimate for the system \eqref{eq: phi_2^0} and \eqref{eq: phi_2^1}.
\begin{proposition}\label{pro: om_2^i}
Let $\phi_{bc,R}^0$ and $\phi_{bc,R}^1$ be the solution of \eqref{eq: phi_2^0} and \eqref{eq: phi_2^1} respectively, and $\om_{bc,R}^i=\Delta_\e \phi_{bc,R}^i$ for $i=0,1.$ Then, for every $i=0,1$, it holds that
\begin{align*}
\|\om_{bc,R}^i(t)\|_{X^{2}}^2+&\la\int_0^t  \|\om_{bc,R}^i\|_{X^{\f73}}^2ds+\int_0^t (\|\na_\e\phi_{bc,R}^i\|_{X^\f73}^2+|\pa_y\phi_{bc,R}^i|_{y=0,1}|_{X^\f73}^2)ds+\int_0^t\|\na_\e\om_{bc,R}^i\|_{X^{2}}^2ds\\
\leq&\f{C}{\la^\f12}\int_0^t|h^i|_{X^{\f73}}^2ds,
\qquad t\in[0, T],
\end{align*}
where $0<T<\min\{T_p, \f{1}{2\lambda}\}$.

\end{proposition}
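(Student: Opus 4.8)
The plan is to reproduce, almost verbatim, the hydrostatic-trick energy estimate from the proof of Proposition \ref{pro: om-good-1}, now applied to \eqref{eq: phi_2^0} (resp. \eqref{eq: phi_2^1}), with two modifications: the datum $f_1,f_2,C(t)$ is replaced by the single source $G^i$, and one must carry the boundary contributions generated by the inhomogeneous Dirichlet condition $\phi^i_{bc,R}|_{y=1-i}=f^i$. I treat $i=0$; the case $i=1$ follows by the reflection $y\mapsto 1-y$. First I would observe that $\om^0_{bc,R}=\Delta_\e\phi^0_{bc,R}$ solves the transport–diffusion equation obtained from \eqref{eq: phi_2^0}, namely $(\pa_t-\Delta_\e)\om^0_{bc,R}+u^p\pa_x\om^0_{bc,R}+v^p\pa_y\om^0_{bc,R}+\pa_y\phi^0_{bc,R}\pa_x\om^p-\pa_x\phi^0_{bc,R}\pa_y\om^p=G^0$, together with $\om^0_{bc,R}|_{y=0,1}=0$. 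Since $\pa_y\om^p\ge c_0>0$ on $[0,T_p]$ by Lemma \ref{lem: u^p}, I would then act $e^{\Phi(t,D_x)}$, apply $\D^2$, and take the $L^2_{x,y}$ inner product with $\D^2(\om^0_{bc,R})_\Phi/\pa_y\om^p$. Because $\om^0_{bc,R}$ vanishes on both walls the boundary term from the dissipation drops out, and I arrive at an identity of exactly the shape in Proposition \ref{pro: om-good-1}, with the source term $\int_{\mathcal S}\D^2 G^0_\Phi\,\D^2(\om^0_{bc,R})_\Phi/\pa_y\om^p$ in place of the $(f_1,f_2,C(t))$ term, \emph{plus} one genuinely new boundary term coming from the hydrostatic cancellation, namely $\int_0^t\big\langle \pa_x(\phi^0_{bc,R})_\Phi,\pa_y(\phi^0_{bc,R})_\Phi\big\rangle_{H^2_x}\big|_{y=0}^{y=1}ds$, which no longer vanishes since $\phi^0_{bc,R}|_{y=1}=f^0\neq 0$.

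Next I would estimate the right-hand side. The transport and commutator terms (the analogues of $T^0,\dots,T^8$) are handled verbatim via Lemmas \ref{lem:com-S}, \ref{lem:product-Gev}, \ref{lem:com-Gev} and the elliptic bound $\|\na_\e\phi^0_{bc,R}\|_{X^r}\le C\|\varphi\om^0_{bc,R}\|_{X^r}$; the only change is that the Poincaré step of that proof is replaced by the elliptic estimate \eqref{est: na_e phi_2} of Lemma \ref{lem: na_e phi_2}, which costs a harmless extra $C\int_0^t\|\om^0_{bc,R}\|_{X^{7/3}}^2 ds$. For the source term, Lemma \ref{lem:product-Gev} and the bounds on $\om^p=\pa_y u^p$ from Lemma \ref{lem: u^p} give $\|G^0\|_{X^{5/3}}\le C\big(\|\pa_x(\phi^0_{bc,S}+\phi^0_{bc,T})\|_{X^{5/3}}+\|\pa_y(\phi^0_{bc,S}+\phi^0_{bc,T})\|_{X^{5/3}}\big)$, so that \eqref{est: pa_x phi_1} of Lemma \ref{lem: (pa_x phi_1, pa_y phi_1)} yields $\int_0^t\|G^0\|_{X^{5/3}}^2 ds\le \f{C}{\la^{1/2}}\int_0^t|h^0|_{X^{7/3}}^2 ds$; pairing as in the $T^{10}$ estimate and using Young's inequality bounds this contribution by $\f{C}{\la^{1/2}}\int_0^t|h^0|_{X^{7/3}}^2 ds+\f{\la}{4}\int_0^t\|\om^0_{bc,R}\|_{X^{7/3}}^2 ds$. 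Finally the new boundary term is controlled directly by \eqref{est: BC-pa_y phi_2-2} of Lemma \ref{lem: (pa_x phi_1, pa_y phi_1)}, which gives the bound $C\int_0^t\|\om^0_{bc,R}\|_{X^{7/3}}^2 ds+\f{C}{\la^{1/2}}\int_0^t|h^0|_{X^{7/3}}^2 ds$.

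Then I would collect everything, integrate on $[0,t]$ with $t\le T<\tfrac1{2\la}$, and choose $\la\ge\la_0$ large enough that every accumulated term of the form $C\int_0^t\|\om^0_{bc,R}\|_{X^{7/3}}^2 ds$ — coming from the elliptic substitution, from the source, and from the hydrostatic boundary term — is absorbed into $\la\int_0^t\|\om^0_{bc,R}\|_{X^{7/3}}^2 ds$ on the left. A Gronwall argument then gives
\begin{align*}
\|\om^0_{bc,R}(t)\|_{X^{2}}^2+\la\int_0^t  \|\om^0_{bc,R}\|_{X^{\f73}}^2ds+\int_0^t\|\na_\e\om^0_{bc,R}\|_{X^{2}}^2ds\leq\frac{C}{\la^{1/2}}\int_0^t|h^0|_{X^{\f73}}^2ds,
\end{align*}
and it remains to add the two terms $\int_0^t\|\na_\e\phi^0_{bc,R}\|_{X^{7/3}}^2 ds$ and $\int_0^t|\pa_y\phi^0_{bc,R}|_{y=0,1}|_{X^{7/3}}^2 ds$, which follow at once from \eqref{est: na_e phi_2} and \eqref{est: BC-pa_y phi_2-1}, again absorbing the resulting $C\int_0^t\|\om^0_{bc,R}\|_{X^{7/3}}^2 ds$ into the already-controlled $\la$-weighted term by taking $\la$ large.

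The hard part is the bookkeeping created by the single inhomogeneous Dirichlet datum $f^i$: unlike in Proposition \ref{pro: om-good-1}, the hydrostatic cancellation $\int\pa_x|\D^2\na_\e\phi_\Phi|^2=0$ no longer closes by itself, and the residual boundary term must be paid for using the $X^{8/3}$ decay of $f^i$ furnished by \eqref{est: h-g} of Lemma \ref{lem: na_e phi_2} together with the $X^{7/3}$ control of $\pa_y\phi^i_{bc,R}$ at the walls, i.e. Lemma \ref{lem: (pa_x phi_1, pa_y phi_1)}. One then has to check carefully that each self-referential occurrence of $\|\om^i_{bc,R}\|_{X^{7/3}}$ (from the elliptic estimate, from the Poincaré substitution, and from the boundary contribution) is subcritical with respect to the dissipative gain $\la$, so that the large-$\la$ choice closes the whole estimate; this is the only genuinely new ingredient compared with the artificial-boundary-condition case already proved.
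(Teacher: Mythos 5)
Your proposal is correct and follows essentially the same route as the paper: repeat the hydrostatic-trick energy estimate of Proposition~\ref{pro: om-good-1} with $(\mathcal{N}_u,\e\mathcal{N}_v)=0$ and source $G^i$, identify the one new contribution from $T^5$ — the boundary term $\int_0^t\langle \pa_x(\phi_{bc,R}^i)_\Phi,\pa_y(\phi_{bc,R}^i)_\Phi \rangle_{H^2_x}\big|_{y=0}^{y=1}ds$ generated by $\phi^i_{bc,R}|_{y=1-i}=f^i\neq 0$ — and close it together with the source term via Lemma~\ref{lem: (pa_x phi_1, pa_y phi_1)} and a large-$\la$ absorption. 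Your additional observation that the elliptic bound $\|\na_\e\phi\|_{X^r}\lesssim\|\om\|_{X^r}$ no longer holds pointwise and must be replaced (after time-integration) by \eqref{est: na_e phi_2} at the cost of a $\la$-absorbable term is a detail the paper leaves implicit, and is correct.
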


\begin{proof} 
The result mainly comes from the process of Propoposition \ref{pro: om-good-1}. Here we take $(\mathcal{N}_u, \e \mathcal{N}_v)=0$ and $\e^2 f_1+f_2-C(t)\pa_x\om^p$ is replaced by $G^i=-\pa_y(\phi_{bc,S}^i+\phi_{bc,T}^i)\pa_x\om^p+\pa_x(\phi_{bc,S}^i+\phi_{bc,T}^i)\pa_y\om^p$ for $i=0,1.$ In order to estimate the source term $\int_0^t \|G^i\|_{X^\f53}^2ds$, using Lemma \ref{lem: (pa_x phi_1, pa_y phi_1)} and product estimate in Lemma \ref{lem:product-Gev}, we get
\begin{align*}
\int_0^t \|G^i\|_{X^\f53}^2ds\leq& C\int_0^t \|\pa_y(\phi_{bc,S}^i+\phi_{bc,T}^i)\|_{X^\f53}^2ds+C\int_0^t \|\pa_x(\phi_{bc,S}^i+\phi_{bc,T}^i)\|_{X^\f53}^2ds\\
\leq&\f{C}{\la^\f12}\int_0^t|h^i|_{X^{\f73}}^2ds.
\end{align*}
The only difference comes from  boundary condition $$\phi_{bc,R}^i|_{y=i}=0,\quad
\phi_{bc,R}^i|_{y=1-i}=-(\phi^i_{bc,S}+\phi^i_{bc,T})|_{y=1-i},$$ which are not zero compared with equation \eqref{eq: tri_e phi-good}. We review $T^5$ in Propoposition \ref{pro: om-good-1}. After integration by parts, the boundary term is left. More precisely, we need to estimate $\int_0^t \Big\langle \pa_x(\phi_{bc,R}^i)_\Phi,\pa_y(\phi_{bc,R}^i)_\Phi \Big\rangle_{H^2_x}\Big|_{y=0}^{y=1}ds.$ According to Lemma \ref{lem: (pa_x phi_1, pa_y phi_1)}, we have
\begin{align*}
\int_0^t \Big\langle \pa_x(\phi_{bc,R}^i)_\Phi,\pa_y(\phi_{bc,R}^i)_\Phi \Big\rangle_{H^2_x}\Big|_{y=0}^{y=1}ds\leq&C\int_0^t\|\om_{bc,R}^i\|_{X^\f73}^2ds+\f{C}{\la^\f12}\int_0^t|h^i|_{X^\f73}^2ds.
\end{align*}
Here, we take $\la$ large enough to  complete the proof.

\end{proof}

\subsection{Proof of Proposition \ref{pro: phi_{bc}}}

In this subsection, we combine all above estimates to finish the proof of Proposition  \ref{pro: phi_{bc}}. Recalling the definition of $\phi_{bc}$:
\begin{align}\label{def: phi_bc}
\phi_{bc}=\phi_{bc, S}+\phi_{bc, T}+\phi_{bc, R},
\end{align}
 we get that
\begin{align}\label{eq: phi-1-11}
\left\{
\begin{aligned}
&(\pa_t-\Delta_\e)\Delta_\e\phi_{bc}+u^p\pa_x \Delta_\e\phi_{bc}+v^p\pa_y \Delta_\e\phi_{bc}+\pa_y\phi_{bc}\pa_x \om^p-\pa_x\phi_{bc}\pa_y \om^p=0,\\
&\phi_{bc}|_{y=0,1}=0,\quad \pa_y\phi_{bc}|_{y=0}=h^0+R_{bc}^{00} +R_{bc}^{01} ,\quad \pa_y\phi_{bc}|_{y=1}=h^1+R_{bc}^{10} +R_{bc}^{11} ,\\
&\phi_{bc}|_{t=0}=0.
\end{aligned}
\right.
\end{align}
Here $R_{bc}^{ji}~ (j=0,1,~i=0,1)$ are linear operators and are defined by
\begin{align*}
R_{bc}^{00}=&\big(\pa_y\phi_{bc, T}^0  +\pa_y\phi_{bc, R}^0\big) |_{y=0},\\
R_{bc}^{01}=&\big(\pa_y\phi_{bc, S}^1 +\pa_y\phi_{bc, T}^1 +\pa_y \phi_{bc, R}^1\big) |_{y=0},\\
R_{bc}^{10}=&\big(\pa_y\phi_{bc, S}^0 +\pa_y\phi_{bc, T}^0 +\pa_y\phi_{bc, R}^0\big) |_{y=1},\\
R_{bc}^{11}=&\big(\pa_y\phi_{bc, T}^1 +\pa_y \phi_{bc, R}^1\big)|_{y=1}.
\end{align*}

Compared with the system \eqref{eq: phi-1-1}, we need to find $(h^0, h^1)$ such that
\begin{align}\label{eq:h=R}
\left\{
\begin{aligned}
h^0+R_{bc}^{00} +R_{bc}^{01} =- \pa_y\phi_{slip}|_{y=0}+C(t),\\
h^1+R_{bc}^{10} +R_{bc}^{01} =- \pa_y\phi_{slip}|_{y=1}+C(t),
\end{aligned}
\right.
\end{align}
hold. 
To do that, we define an operator $R_{bc}[h^0, h^1],$ which is defined by 
\begin{align}\label{def: R_bc}
R_{bc}[h^0, h^1]=
\left(\begin{array}{cc}R_{bc}^{00} & R_{bc}^{01} \\R_{bc}^{10} & R_{bc}^{11} \end{array}\right)
\end{align}
is a $2\times 2$ matrix operator and 
 is well-defined on the Banach space
\begin{align}\label{def: Z_bc}
Z_{bc}=\{(h^0, h^1)\in L^2(0,t; L^2)| \int_0^t|(h^0, h^1)|^2_{X^\f73}ds<+\infty\}.
\end{align}

\medskip

\begin{proposition}\label{pro: (h^0, h^1)}
There exists $\la_0\geq1$ such that if $\la\geq \la_0$, the map $R_{bc}: Z_{bc}\to Z_{bc}$ defined by \eqref{def: R_bc}  satisfies
\begin{align}\label{est: R_bc}
\int_0^t\Big|R_{bc}[h^0, h^1]\Big|_{X^\f73}^2 ds\leq&\f{C}{\la^\f12}\int_0^t|(h^0,h^1)|_{X^{\f73}}^2ds.
\end{align}
Hence, the operator $I+R_{bc}$ is invertible in $Z_{bc}$. Moreover, there exists $(h_0, h_1)\in Z_{bc}$ such that \eqref{eq:h=R} holds and $(h_0, h_1)$ is defined by
\beno
(h_0, h_1)=(I+R_{bc})^{-1}(- \pa_y\phi_{slip}|_{y=0}+C(t), - \pa_y\phi_{slip}|_{y=1}+C(t)).
\eeno
 
\end{proposition}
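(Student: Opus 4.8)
The plan is to establish the operator bound \eqref{est: R_bc} first, and then deduce invertibility of $I+R_{bc}$ and the explicit formula for $(h^0,h^1)$ by a Neumann series argument. For the bound, I would treat each of the four entries $R_{bc}^{ji}$ separately, since each is a trace at $y=0$ or $y=1$ of $\pa_y$ of one of the constituent correctors $\phi^i_{bc,S}$, $\phi^i_{bc,T}$, $\phi^i_{bc,R}$. The $\phi^i_{bc,S}$ contributions are handled by Lemma \ref{lem: phi^i|_y=1-i} (the $M=0$ case of \eqref{est: (e|k|)^M pa_y phi^i|_y=1-i}), noting that the "off-diagonal" traces like $\pa_y\phi^0_{bc,S}|_{y=1}$ gain a full power of $\la^{-1}$ there, while the "diagonal" traces such as $\pa_y\phi^0_{bc,S}|_{y=0}$ do not appear in $R_{bc}$ — this is exactly why $R_{bc}^{00}$ and $R_{bc}^{11}$ omit the Stokes term. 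The $\phi^i_{bc,T}$ contributions are controlled by Corollary \ref{cor: om^i-transport} (the term $|\pa_y\phi^i_{bc,T}|_{y=0,1}|_{X^{7/3}}$), and the $\phi^i_{bc,R}$ contributions by \eqref{est: BC-pa_y phi_2-1} in Lemma \ref{lem: (pa_x phi_1, pa_y phi_1)} together with Proposition \ref{pro: om_2^i}, which bounds $\int_0^t\|\om^i_{bc,R}\|_{X^{7/3}}^2ds$ by $\la^{-1/2}\int_0^t|h^i|_{X^{7/3}}^2ds$. Collecting all of these gives $\int_0^t|R_{bc}[h^0,h^1]|_{X^{7/3}}^2ds\le C\la^{-1/2}\int_0^t|(h^0,h^1)|_{X^{7/3}}^2ds$, which is \eqref{est: R_bc}.

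Once \eqref{est: R_bc} is in hand, invertibility is immediate: viewing $R_{bc}$ as a bounded linear operator on the Banach space $Z_{bc}$ defined in \eqref{def: Z_bc}, its operator norm is at most $C\la^{-1/4}$, so for $\la\ge\la_0$ with $\la_0$ chosen so that $C\la_0^{-1/4}<1$, the Neumann series $\sum_{n\ge0}(-R_{bc})^n$ converges in the operator norm and furnishes $(I+R_{bc})^{-1}$ as a bounded operator on $Z_{bc}$. It remains only to check that the prescribed right-hand side $(-\pa_y\phi_{slip}|_{y=0}+C(t),\,-\pa_y\phi_{slip}|_{y=1}+C(t))$ lies in $Z_{bc}$; this follows from Proposition \ref{pro: om-good} (equivalently Proposition \ref{pro: om-good-1}), which gives $\int_0^t|\na_\e\phi_{slip}|_{y=0,1}|_{X^{7/3}}^2ds<\infty$, together with the bound $|C(t)|\le C\|\om^R\|_{L^2}$. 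Setting $(h^0,h^1)=(I+R_{bc})^{-1}(-\pa_y\phi_{slip}|_{y=0}+C(t),\,-\pa_y\phi_{slip}|_{y=1}+C(t))$ then solves \eqref{eq:h=R} by construction, and one reads off from \eqref{eq: phi-1-11} that the resulting $\phi_{bc}=\phi_{bc,S}+\phi_{bc,T}+\phi_{bc,R}$ satisfies the boundary condition $\pa_y\phi_{bc}|_{y=0,1}=-\pa_y\phi_{slip}|_{y=0,1}+C(t)$ required in \eqref{eq: phi-1-1}.

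The main obstacle is the derivation of \eqref{est: R_bc} with the crucial gain of a negative power of $\la$, which is what makes $I+R_{bc}$ a small perturbation of the identity. Each of the three types of constituent estimates already carries such a gain, but one must be careful about the interplay of regularity indices: the traces are taken in $X^{7/3}$, while the interior estimates for $\phi^i_{bc,S}$, $\phi^i_{bc,T}$, $\phi^i_{bc,R}$ are stated at assorted indices ($\f52$, $\f{11}{6}+\f{\theta}3$, $2$, etc.), so the argument relies on the extra $\f16$ or $\f13$ of smoothing visible in Proposition \ref{pro: na_e phi}, Lemma \ref{lem: phi^i|_y=1-i}, and Corollary \ref{cor: om^i-transport}, combined with trace/interpolation (Gagliardo–Nirenberg \eqref{equality: GN} and Calderón–Zygmund) to land the boundary values in $X^{7/3}$ without losing the $\la$ gain. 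A secondary subtlety is that $R_{bc}^{ji}$ depends on $\phi^i_{bc,R}$, which itself is defined through an equation \eqref{eq: phi_2^0}–\eqref{eq: phi_2^1} whose source and boundary data involve $h^i$; one must make sure this circularity is benign, i.e.\ that $\phi^i_{bc,R}$ is first constructed (by the energy method of Proposition \ref{pro: om_2^i}, which does not invoke $R_{bc}$) and only afterwards is its boundary trace fed into $R_{bc}$, so that $R_{bc}$ is a genuinely well-defined bounded linear operator on $Z_{bc}$ before the Neumann series is invoked.
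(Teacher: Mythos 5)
Your proposal is correct and takes essentially the same route as the paper: the bound \eqref{est: R_bc} is obtained by summing the boundary-trace estimates from Lemma \ref{lem: phi^i|_y=1-i}, Corollary \ref{cor: om^i-transport}, and Proposition \ref{pro: om_2^i}, followed by the standard Neumann-series argument for invertibility. The only minor redundancy is invoking \eqref{est: BC-pa_y phi_2-1} together with the interior bound from Proposition \ref{pro: om_2^i} to control $\pa_y\phi^i_{bc,R}|_{y=0,1}$, whereas Proposition \ref{pro: om_2^i} already contains the trace term $\int_0^t|\pa_y\phi^i_{bc,R}|_{y=0,1}|^2_{X^{7/3}}ds$ on its left-hand side and delivers the $\lambda^{-1/2}$ gain directly.
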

\begin{proof}
First, by Lemma \ref{lem: phi^i|_y=1-i}, Proposition \ref{cor: om^i-transport}, Corollary \ref{pro: om^i-transport}
 and Proposition \ref{pro: om_2^i}, it is easy to get 
\beno
\int_0^t\Big|R_{bc}[h^0, h^1]\Big|_{X^\f73}^2 ds\leq&\f{C}{\la^\f12}\int_0^t|(h^0,h^1)|_{X^{\f73}}^2ds.
\eeno
Taking $\la$ large enough, we get that the operator $I+R_{bc}$ is invertible in $Z_{bc}$. Thus, there exists $(h_0, h_1)\in Z_{bc}$ such that \eqref{eq:h=R} holds.
 
\end{proof}

Let's continue to prove Proposition \ref{pro: phi_{bc}}.   According to Proposition \ref{pro: na_e phi}, Proposition \ref{pro: Estimate for om^i-transport}, Corollary \ref{cor: om^i-transport} and Proposition \ref{pro: om_2^i}, we get by \eqref{def: phi_bc} that
\begin{align*}
\int_0^t \|\na_\e\phi_{bc} \|_{X^\f73}^2ds\leq &\int_0^t \|\na_\e\phi_{bc,S} \|_{X^\f52}^2ds+\int_0^t \|\na_\e\phi_{bc,T} \|_{X^\f52}^2ds+\int_0^t \|\na_\e\phi_{bc,R} \|_{X^\f73}^2ds\\
\leq&\f{C}{\la^\f12}\int_0^t|(h^0, h^1)|_{X^\f73}^2ds,
\end{align*}
and
\begin{align*}
\int_0^t \|\varphi \Delta_\e \phi_{bc}\|^2_{X^2}ds\leq& \int_0^t \|\varphi \Delta_\e \phi_{bc,S}\|^2_{X^\f 52}ds+\int_0^t \|\varphi \Delta_\e \phi_{bc,T}\|^2_{X^\f52}ds+\int_0^t \| \Delta_\e \phi_{bc,R}\|^2_{X^\f73}ds\\
\leq&\f{C}{\la^\f12}\int_0^t|(h^0, h^1)|_{X^\f73}^2ds,
\end{align*}
which imply
\begin{align*}
\int_0^t \|\na_\e\phi_{bc} \|_{X^\f73}^2+\|\varphi \Delta_\e \phi_{bc}\|^2_{X^2}ds\leq&\f{C}{\la^\f12}\int_0^t|(h^0, h^1)|_{X^\f73}^2ds.
\end{align*}

Due to Proposition \ref{pro: (h^0, h^1)} and taking $\mathcal{A}=(I+R_{bc})^{-1},$ we know $\mathcal{A}$ is a zero-order bounded operator in $Z_{bc}$ and  obtain
\begin{align*}
\int_0^t|(h^0, h^1)|_{X^\f73}^2ds=&\int_0^t|\mathcal{A}(- \pa_y\phi_{slip}|_{y=0}+C(s), - \pa_y\phi_{slip}|_{y=1}+C(s))|_{X^\f73}^2ds\\
\leq&C\int_0^t \Big(|\na_\e\phi_{slip}|_{y=0,1}|^2_{X^\f73}+|C(s)|^2\Big)ds,
\end{align*}
which finish this proposition.

\bigskip

\section* {Acknowledgments}
The authors would like to thank Professors Zhifei Zhang for the valuable discussions and suggestions. C. Wang is partially supported by NSF of China under Grant 12071008. Y. Wang is partially supported by NSF of China under Grant 12101431. 




\begin{thebibliography}{99}


 

\bibitem{AW} R. Alexandre, Y. Wang, C.-J. Xu and T. Yang, {\it Well-posedness of the Prandtl Equation in Sobolev Spaces},
 J. Amer. Math. Soc., {28}(2015), 745--784.
 
 

 \bibitem{BCD} H. Bahouri, J.~Y. Chemin and R. Danchin, {\it Fourier analysis and nonlinear partial differential equations},  Grundlehren der mathematischen Wissenschaften 343, Springer-Verlag Berlin Heidelberg, 2011.


\bibitem{Brenier} Y. Brenier, {\it Homogeneous hydrostatic flows with convex velocity profiles.} Nonlinearity, 12(3): 495--512, 1999.

\bibitem{Brenier1} Y. Brenier,{\it Remarks on the derivation of the hydrostatic Euler equations.} Bull. Sci. Math., 127(7):585--595, 2003.

\bibitem{CINT} C. Cao, S. Ibrahim, K. Nakanishi, and E. Titi,{\it  Finite-time blowup for the inviscid Primitive equations of oceanic and atmospheric dynamics.} Comm. Math. Phys., 337(2):473--482, 2015.


 \bibitem{CWZ} D. Chen, Y. Wang and Z. Zhang,{\it Well-posedness of the linearized Prandtl equation around a non-monotonic shear flow},  Ann. Inst. H. Poincare Anal. Non Lineaire, 35(2018), 1119--1142.
 
 \bibitem{CWZ-1} Q. Chen, D. Wu, and Z. Zhang, {\it  On the $L^\infty$ stability of prandtl expansions in gevrey class.}, arXiv:2004.09755.
 
  \bibitem{DG}  H. Dietert and G\'{e}rard-Varet, {\it Well-posedness of the Prandtl equation without any structural assumpation,}  Ann. PDE, 5(2019),  Art. 8, 51 pp. 
  
    \bibitem{FTZ}  M. Fei, T. Tao and Z. Zhang, {\it On the zero-viscosity limit of the Navier-Stokes equations in $R^3_+$ without analyticity.} J. Math. Pures Appl. (9), 112 (2018), 170--229.

 
 \bibitem{GD} D. G\'{e}rard-Varet and E. Dormy, {\it On the ill-posedness of the Prandtl equation.} J. Amer. Math. Soc., 23 (2010), 591--609.

\bibitem{GIM} D. G\'{e}rard-Varet, S. Iyer and  Y. Maekawa, {\it Improved well-posedness for the Triple-Deck and related models via concavity.} arXiv: 2205.15829.

\bibitem{GMM}  D. G\'{e}rard-Varet, Y. Maekawa and N. Masmoudi, {\it  Gevrey stability of Prandtl expansions for 2-dimensional Navier-Stokes flows.} Duke Math. J.,  167(2018),  2531--2631.

\bibitem{GMM1} D. G\'{e}rard-Varet, Y. Maekawa and N. Masmoudi, {\it Optimal Prandtl expansion around concave boundary layer.} arXiv:2005.05022.


\bibitem{GM} D. G\'{e}rard-Varet and N. Masmoudi, {\it Well-posedness for the Prandtl system without analyticity or monotonicity}, Ann. Sci. Ec. Norm. Super., 48(2015), 1273--1325.


\bibitem{GMV} D. G\'erard-Varet, N. Masmoudi and V. Vicol, {\it Well-posedness of the hydrostatic Navier-Stokes equations},  Anal. PDE 13 (2020), no. 5, 1417–1455.




\bibitem{Grenier} E. Grenier, {\it On the derivation of homogeneous hydrostatic equations.} M2AN Math. Model. Numer. Anal., 33(5):965--970, 1999.

\bibitem{GGN} E. Grenier, Y. Guo and T. Nguyen, 
{\it Spectral instability of general symmetric shear flows in a two-dimensional channel}, Adv. Math., 292 (2016), 52-110. 

\bibitem{GN} E. Grenier and T. Nguyen,{\it  $L^\infty$ instability of Prandtl layers.} Ann. PDE, 5 (2019), Paper No. 18, 36 pp.




\bibitem{KMVW} I. Kukavica, N. Masmoudi, V. Vicol and T. Wong, {\it  On the local well-posedness of the Prandtl and the hydrostatic Euler equations with multiple monotonicity regions.} SIAM J. Math. Anal., 46(6):3865--3890, 2014.

\bibitem{KTVZ}I. Kukavica, R. Temam, V. Vicol and M. Ziane,{\it Local existence and uniqueness for the hydrostatic Euler equations on a bounded domain.} J. Differential Equations, 250(3): 1719--1746, 2011.

\bibitem{KVW} I. Kukavica, V. Vicol and F. Wang ,{\it The inviscid limit for the Navier-Stokes equations with data analytic only near the boundary.} Arch. Ration. Mech. Anal. 237 (2020), no. 2, 779--827.

\bibitem{LL} P.-Y. Lagr\'ee and S. Lorthois, {\it The RNS/Prandtl equations and their link with other asymptotic
descriptions: application to the wall shear stress scaling in a constricted pipe}, Int. J. Eng. Sci., 43(2005), 352--378.

\bibitem{LY} W. Li and T. Yang, {\it Well-posedness in Gevrey space for the Prandtl equations with non-degenerate critical points.} J. Eur. Math. Soc., 22(2020), 717--775..

 
\bibitem{LCS}M. C. Lombardo, M. Cannone and M. Sammartino, {\it Well-posedness of the boundary layer equations.} SIAM J. Math. Anal., 35 (2003), 987--1004.

\bibitem{Mae} Y. Maekawa, {\it On the inviscid limit problem of the vorticity equations for viscous incompressible flows in the half-plane.} Comm. Pure Appl. Math., 67 (2014),1045--1128.

\bibitem{MW1} N. Masmoudi and T. Wong, {\it On the $H^s$ theory of hydrostatic Euler equations.} Arch. Ration. Mech. Anal., 204(1):231--271, 2012.


\bibitem{MW}  N. Masmoudi and T. Wong, {\it Local-in-time existence and uniqueness of solutions to the Prandtl
equations by energy methods.} Comm. Pure Appl. Math., 68(2015), 1683--1741.


\bibitem{NN}
T. Nguyen and  T. Nguyen, {\it The inviscid limit of Navier-Stokes equations for analytic data on the half-space.} Arch. Ration. Mech. Anal. 230 (2018), no. 3, 1103--1129. 

\bibitem{Olei}  O. Oleinik, {\it On the mathematical theory of boundary layer for an unsteady flow of incompressible fluid},  J. Appl. Math. Mech., 30(1966), 951--974(1967).

\bibitem{PZZ} M. Paicu, P. Zhang and Z. Zhang, {\it On the hydrostatic approximate of the Navier-Stokes equations in a thin strip,} arXiv:1904.04438.


\bibitem{Renardy}  M. Renardy, {\it Ill-posedness of the hydrostatic Euler and Navier-Stokes equations},  Arch. Ration. Mech. Anal., 194(2009), 877--886.






\bibitem{SC1}  M. Sammartino and R. E. Caflisch, {\it Zero viscosity limit for analytic solutions of the Navier-Stokes equation on a half-space. I. Existence for Euler and Prandtl equations.} Comm. Math. Phys., 192 (1998), 433--461.

\bibitem{SC2} M. Sammartino and R. E. Caflisch, {\it Zero viscosity limit for analytic solutions of the Navier-Stokes equation on a half-space. II. Construction of the Navier-Stokes solution.} Comm. Math. Phys., 192 (1998), 463--491.

 \bibitem{WWZ} C. Wang, Y. Wang and Z. Zhang, {\it Zero-viscosity limit of the Navier-Stokes equations in the analytic setting}, Arch. Ration. Mech. Anal., 224 (2017),  555--595.
 
 
\bibitem{WWZ1} C. Wang, Y. Wang and Z. Zhang, 
{\it Gevrey stability of hydrostatic approximate for the Navier-Stokes equations in a thin domain}, Nonlinearity 34 (2021), no. 10, 7185–7226.

\bibitem{WXZ} L. Wang, Z. Xin and A. Zang,{\it Vanishing viscous limits for 3D Navier-Stokes equations with a Navier slip boundary condition.} J. Math. Fluid Mech., 14(2012), 791--825.

\bibitem{Wang} X. Wang, {\it A Kato type theorem on zero viscosity limit of Navier-Stokes flows.} Indiana Univ. Math. J., 50(2001), 223--241.

\bibitem{Wong} T. Wong, {\it Blowup of solutions of the hydrostatic Euler equations.} Proc. Amer. Math. Soc., 143(3):1119--1125, 2015.







\bibitem{XX} Y. Xiao and Z. Xin, {\it On the vanishing viscosity limit for the 3D Navier-Stokes equations with a slip boundary condition.} Comm. Pure Appl. Math., 60(2007), 1027--1055.


\bibitem{XZ}Z. Xin and L. Zhang, {\it On the global existence of solutions to the Prandtl system.} Adv. Math., 181 (2004), 88--133.


\end{thebibliography}
\end{document}